\newtheorem{theorem}{Theorem}
\newtheorem{assumption}{Assumption}
\newtheorem{cor}[theorem]{Corollary}
\newtheorem{definition}{Definition}
\newtheorem{remark}{Remark}
\newtheorem{lemma}{Lemma}
\newtheorem{proposition}{Proposition}
\numberwithin{equation}{section}
\renewenvironment{proof}{\smallskip\noindent\emph{\textbf{Proof.}}%
  \hspace{1pt}}{\hspace{-5pt}{\nobreak\quad\nobreak\hfill\nobreak%
    $\square$\vspace{2pt}\par}\smallskip\goodbreak}
\newcommand{\C}[1]{\mathbf{C^{#1}}}
\newcommand{\Cc}[1]{\mathbf{C}_c^{#1}}
\newcommand{\BV}{\mathbf{BV}}
\renewcommand{\L}[1]{\mathbf{L^#1}}
\newcommand{\Lloc}[1]{\mathbf{L^{#1}_{loc}}}
\newcommand{\modulo}[1]{{\left|#1\right|}}
\newcommand{\norma}[1]{{\left\|#1\right\|}}
\newcommand{\R}{\mathbb{R}}
\newcommand{\N}{\mathbb{N}}
\newcommand{\Z}{\mathbb{Z}}
\renewcommand{\epsilon}{\varepsilon}
\renewcommand{\phi}{\varphi}
\renewcommand{\theta}{\vartheta}
\newcommand{\tv}{\mathinner{\rm TV}}
\renewcommand{\d}[1]{\mathinner{\mathrm{d}{#1}}}
\newcommand{\del}{\partial}
\newcommand{\be}{\begin{equation}}
\newcommand{\ee}{\end{equation}}
\definecolor{ffqqqq}{rgb}{1.,0.,0.}
\definecolor{uuuuuu}{rgb}{0.26666666666666666,0.26666666666666666,0.26666666666666666}
\DeclareMathOperator{\sgn}{sgn}
\let\@fnsymbol\@arabic
\title{ 
Non-local traffic flow models with time delay: \\
well-posedness and numerical approximation}
\author{
\textsc{Ilaria Ciaramaglia\footnotemark[1]}
\and
\textsc{Paola Goatin\footnotemark[1]}
\and
\textsc{Gabriella Puppo\footnotemark[2]}
}
\date{\today}
\begin{document}
\maketitle

\footnotetext[1]{ Universit\'e C\^ote d'Azur, Inria, CNRS, LJAD, 2004 route des
 Lucioles - BP 93, 06902 Sophia Antipolis Cedex, France. E-mail:
 \texttt{\{ilaria.ciaramaglia, paola.goatin\}@inria.fr}}
 
 \footnotetext[2]{Dipartimento di Matematica - Sapienza, Università di Roma; P.le Aldo Moro, 5 - 00185
Roma, Italy. E-mail: \texttt{gabriella.puppo@uniroma1.it}}

\begin{abstract}
We prove the well-posedness of weak entropy solutions of a scalar non-local traffic flow model with time delay. Existence is obtained by convergence of finite volume approximate solutions constructed by Lax-Friedrich and Hilliges-Weidlich schemes, while the $\L1$ stability with respect to the initial data and the delay parameter relies on a Kru{\v{z}}kov-type doubling of variable technique. \\
Numerical tests are provided to illustrate the efficiency of the proposed schemes, as well as the solution dependence on the delay and look-ahead parameters.

\medskip

  \noindent\textit{2020~Mathematics Subject Classification:} 35L65, 35L03,
  65M12, 76A30.

  \medskip

  \noindent\textit{Keywords:} Non-local conservation laws; Time delay; Finite volume schemes; Macroscopic traffic models.
  
 \end{abstract}
 
\section{Introduction}

Non-local traffic flow models~\cite{BlandinGoatin2016, Chiarello2020, ChiarelloGoatin2018, 2018Gottlich} have been recently introduced in the literature as extensions of classical macroscopic descriptions, to account for short / long range interactions among vehicles and overcome some drawbacks of local dynamic descriptions, such as the display of infinite acceleration. 
They are based on the assumption that drivers adapt their speed to a weighted mean of the downstream traffic density or velocity, thus resulting in integral dependencies of the flux function on the unknown. 
It has been shown that such models inherit the main properties of the original (local) equations, such as the maximum principle and finite, anisotropic, propagation speed of information, while gaining in regularity, which in turn provides other desirable features, such as the previously mentioned finite acceleration. In particular, the non-local interaction range can be used to model connected autonomous vehicle dynamics. On the purely mathematical side, the gain in regularity provides well-posedness results for problems whose local counterpart misses analytical results, as in the case of multi-class models~\cite{ChiarelloGoatin2019}. 

The effect of delay, accounting for drivers' reaction time,  on traffic flow dynamics has been investigated in literature both using microscopic and macroscopic models \cite{Burger2018, Gottlich2021}, showing that local delayed models are able to reproduce stop-and-go waves at a macroscopic level. The delay has been included in several forms such as in the negative part of the diffusion term in a diffusively corrected Lighthill-Whitham-Richard model \cite{Nelson2000,Tordeux2018}, or in the relaxation terms in second order models \cite{Ngoduy2014}. Modeling properly this phenomenon is crucial for developing techniques aimed at reduce traffic instabilities. 

More recently, Keimer and Pflug~\cite{KeimerPflug2019} proved the existence and uniqueness of solutions for a scalar non-local traffic model with time delay, whose local counterpart is still not fully understood~\cite{Burger2018, SIPAHI2007, Tordeux2018}. The authors' approach of choosing a priori a time delay parameter representing the reaction time seems to be more suitable for applications and data-driven modeling, with respect to the space-time non-locality considered in \cite{Du2023}.
They also investigated the behavior of the solution for the delay approaching zero, showing that it converges in the $\L 1$-norm to the solution of the associated non-delayed model. Unlike the well-posedness result, this convergence holds only on a sufficiently small time horizon.

Entropy solutions of classical (local and non-delayed) scalar conservation laws are known to satisfy a strict maximum principle. In particular, in traffic flow applications, such property ensures that the vehicles density never exceeds the maximum capacity of the considered road. While non-local models without time delay satisfy the maximum principle~\cite{BlandinGoatin2016, ChiarelloGoatin2018}, the local \cite{Gottlich2021} and non-local delayed models \cite{KeimerPflug2019} introduced fail to ensure that the traffic density stays bounded by the maximal one, also for small times.

To address the above mentioned drawbacks, in this paper we propose a non-local scalar model with time delay for which a positivity property and the maximum principle are fulfilled. We also succeed in providing uniform $\BV$ bounds, well-posedness results and convergence to the non-delayed associated model, without any restriction on the time horizon, unlike \cite{KeimerPflug2019}.

\subsection{Modeling}
We fix a constant $\tau >0$ representing the human reaction time, and we consider the following non-local traffic flow model with time delay
\begin{equation}\label{delay_bis}
\partial_t \rho(t,x)+\partial_x\big(\rho(t,x)f(\rho(t,x))v((\rho\ast \omega)(t-\tau,x))\big)=0,
\end{equation}
where $\rho:\R^+ \times \R\to [0,R]$ is the vehicle density,
$v: [0,R] \to [0,V]$ is the mean traffic speed and $\omega:[0,L]\to \R^+$ is a convolution kernel. The positive constants $R$, $V$ and $L$ are respectively the maximal traffic density, the maximal speed and the look-ahead distance of drivers.

We remark that, if the function $f:[0,R]\to [0,1]$ is chosen to be constantly equal to $1$, then the equation~\eqref{delay_bis} coincides with the model considered in~\cite{KeimerPflug2019}, that is
\begin{equation}\label{delay}
\partial_t \rho(t,x)+\partial_x\big(\rho(t,x)v((\rho\ast \omega)(t-\tau,x))\big)=0.
\end{equation}
Here, $f$ is introduced to play the role of a saturation function (see e.g.~\cite{ChiarelloGoatin2018,2018Gottlich,SopasakisKatsoulakis2006}), to serve as an indicator of the free space available on the road, whose properties are detailed later.

Due to the delayed time dependence, equation~\eqref{delay_bis} needs to be coupled with an initial condition defined on the interval $[-\tau,0]$.
In certain modelling applications it might only be possible to gather the traffic data at a given initial time $t=0$.
Thus, we couple \eqref{delay_bis} with an initial condition obtained as a constant extension of the initial datum $\rho(0,x)=\rho^0(x)\in [0,R]$, meaning that we assume
\begin{equation}\label{eq:initial datum}
\rho(t,x)=\rho^0(x),\qquad \forall (t,x)\in [-\tau,0]\times\R.
\end{equation}
For this particular choice of past-time data, the problem can be read as a classical Cauchy problem. 

Unlike similar non-local equations, we underline again that the model \eqref{delay_bis} is characterized by the presence of a delay in time, which makes general theoretical results \cite{ChiarelloGoatin2018} inapplicable as such. To obtain the well-posedness of the Cauchy problem~\eqref{delay_bis},~\eqref{eq:initial datum}, we require the following assumptions.
\begin{assumption}\label{hp}
    For any $T>0$, it holds:
    \begin{itemize}
        \item $v\in\C2([0,R];[0,V])$, $v'\leq 0$, $v(0)=V$ and $v(R)=0$; 
        \item $f\in\C1 ([0,R];[0,1])$, $f'(\rho)\leq 0$, $f(0)=1$ and $f(R)=0$;
        \item $\omega\in \C1 ([0,L];\R^+)$,  $\omega'(x)\leq 0$ and 
        \begin{equation}\label{J0}
        \int_0^L\omega(s)\d s=J_0=1.
        \end{equation}
    \end{itemize}
    We extend the kernel $\omega(x)=0$ for $x>L$, and the saturation function $f(\rho)=0$ for $\rho>R$, and $f(\rho)=1$ for $\rho<0$.
\end{assumption}

\begin{remark}\label{hpremark}
    The assumption on the velocity function $v$ is classical, but it can probably be weakened in some of the following results. For instance, in~\cite{KeimerPflug2019} the weaker condition $v\in W^{1,+\infty}_{loc}(\mathbb{R^+})$ is taken, which is sufficient to prove existence and uniqueness of solutions to~\eqref{delay}, even if just on a sufficiently small time horizon. 
    Also, the less restrictive
    hypothesis on the interaction kernel $\omega\in W^{1,\infty}(\R,\R^+)$ is used. 
    We need the regularity and monotonicity of $v$ and $\omega$ to derive the $\BV$ estimate for the approximate solutions. Moreover,  it is physically reasonable to suppose that $\omega$ is a non-increasing function of the distance, since this means that the drivers assign greater importance to closer vehicles.
\end{remark}

\begin{remark}
        For simplicity, we assume $\int_0^L\omega(s)\d s=1$, thus avoiding to extend $v$ beyond $R$. 
        Anyway, the following results can be generalized to any positive interaction strength $J_0>0$.
\end{remark}

Under the above hypothesis, we prove the existence and stability of weak entropy solutions of~\eqref{delay_bis},~\eqref{eq:initial datum} defined as follows~\cite{Kruzkov}.

\begin{definition}[Entropy weak solution]\label{entropy}
Given $\rho^0\in \L1(\R;[0,R])$, a function  $\rho\in \L 1([0,T]\times\R;\R)$ is an entropy weak solution of the Cauchy problem \eqref{delay_bis}-\eqref{eq:initial datum} if for every  test function $\phi\in\Cc 1([0,T)\times\R;\R^+)$ and $\kappa\in\R$
\begin{align*}
    \int_0^T\int_\R&\big(\modulo{\rho-\kappa}\del_t\phi+\sgn(\rho-\kappa)(\rho f(\rho)-\kappa f(\kappa))v((\rho\ast\omega)(t-\tau,x))\del_x\phi\\
    &-\sgn(\rho-\kappa)\kappa f(\kappa)\del_x v((\rho\ast\omega)(t-\tau,x))\phi\big)\mathrm{d}x\mathrm{d}t\\
    &+\int_\R\modulo{\rho^0(x)-\kappa}\phi(0,x)\mathrm{d}x\geq 0\,.
\end{align*}
\end{definition}

Observe that the introduction of the saturation function $f$ guarantees that a weak maximum principle holds, unlike the case of equation~\eqref{delay}, see~\cite[Corollary 4.4]{KeimerPflug2019}. Indeed, since the constants $0$ and $R$ are solutions of \eqref{delay_bis}, by comparison we have $0\leq\rho(t,x)\leq R$ for all $(t,x)\in \R^+\times\R$. 
On the other hand, $F(\rho):=\rho f(\rho)$ being nonlinear, it prevents the use of the approach based on characteristics and fixed point theorem exploited in~\cite{KeimerPflug2019}, which provides uniqueness of weak solutions without enforcing entropy conditions. Therefore, in this work we need to refer to entropy weak solutions as in Definition~\ref{entropy}.

\subsection{Structure of the contribution}
The rest of the paper is organized as follows. Sections~\ref{sec:FVapprox} and ~\ref{convergencemono} are devoted to the construction of two types of finite volume approximate solutions and the derivation of the corresponding uniform compactness estimates. Their convergence towards a global entropy weak solution is proved in Section~\ref{sec:convergence}, where we provide the existence in Theorem~\ref{E1}, together with with the maximum principle, the bound on the total variation and the $\L1$ stability in time,
and the stability in Theorem~\ref{stabilityteo}. As a consequence, we show in Corollary \ref{stabilitycor} the $\L 1$ convergence to the associated non-delayed model as the time delay parameter $\tau$ goes to zero. We stress that our results hold on every finite time horizon, which is one of the improvements with respect to the results on the non-local model with time delay proposed in~\cite{KeimerPflug2019}.
A numerical study of the solution properties is deferred to Section~\ref{sec:num}, with a particular focus on the capability of the saturation function $f$ in guaranteeing the maximum principle, and on the influence of the delay on the increase of the total variation of the solution. 
Some technical details about the proofs are reported in Appendix~\ref{sec:app}.

\section{Finite volume approximations}
\label{sec:FVapprox}
We take a space step $\Delta x$ such that for the size of the kernel support we have $L=N\Delta x$ for some $N\in\mathbb{N}$ holds, and a time step $\Delta t$ such that the time delay parameter is $\tau=h\Delta t$ for some $h\in\N$. 
We discretize \eqref{delay} on a fixed grid made up of the cell centers $x_j=(j-\frac{1}{2})\Delta x$, the cell interfaces $x_{j+\frac{1}{2}}=j\Delta x$ for $j\in\mathbb{Z}$, and the time mesh $t^n=n\Delta t$, $n\in\N$.
We want to build a finite volume approximate solution, denoted as $\rho^{\Delta x}(t,x)=\rho^n_j$ for $(t,x)\in[t_n,t_{n+1})\times [x_{j-\frac{1}{2}},x_{j+\frac{1}{2}})$.
In order to do this, first we approximate the initial datum $\rho^0$ with the piecewise constant function
$$
\rho^0_j=\frac{1}{\Delta x}\int_{x_{j-\frac{1}{2}}}^{x_{j+\frac{1}{2}}} \rho^0(x)\d x, \qquad j\in\mathbb{Z},
$$
 and we set $\rho^{-n}_j=\rho^0_j$ for $j\in\mathbb{Z}$ and $n=1,\dots,h$, consistently with \eqref{eq:initial datum}.
Similarly, for the kernel, we define
$$
\omega^k:=\frac{1}{\Delta x}\int_{k\Delta x}^{(k+1)\Delta x}\omega(x)\d x, \qquad k\in\mathbb{N}_0=\N\cup\{0\},
$$
so that from \eqref{J0} we get $\Delta x\sum_{k=0}^{+\infty}\omega^k=\Delta x\sum_{k=0}^{N-1}\omega^k=J_0$, where the sum is finite since $\omega^k=0$ for $k\geq N$ sufficiently large.
Then, we denote 
$$
V^n_j:=v\left( \Delta x\sum_{k=0}^{+\infty}\omega^k\rho^n_{j+k}\right)=v\left( \Delta x\sum_{k=0}^{N-1}\omega^k\rho^n_{j+k}\right),
$$
which involves a quadrature formula to approximate the convolution term.
Thus, as in~\cite{ChiarelloGoatin2018}, we consider the following Lax-Friedrichs (LF) numerical flux
$$
(\mathcal{F}_{LF})^n_{j+\frac{1}{2}}=\frac{1}{2}\left(F(\rho^n_j)V^{n-h}_j+F(\rho^n_{j+1})V^{n-h}_{j+1}\right)-\frac{\alpha}{2}\left(\rho^n_{j+1}-\rho^n_j\right),
$$
with $\lambda=\Delta t/\Delta x$ and $F(\rho)=\rho f(\rho)$, which leads to the following numerical scheme
\begin{equation}\label{schema}
\rho^{n+1}_j=\rho^n_j+\frac{\lambda\alpha}{2}\left(\rho^n_{j+1}-2\rho^n_j+\rho^n_{j-1}\right)-\frac{\lambda}{2}\big(F(\rho^n_{j+1})V^{n-h}_{j+1}-F(\rho^n_{j-1})V^{n-h}_{j-1}\big),
\end{equation}
being $\alpha$ the numerical viscosity coefficient. 
Moreover, following~\cite{chiarello2023existence, ChiarelloGoatin2019, 2018Gottlich}, we consider the following Hilliges-Weidlich (HW) type scheme~\cite{Hilliges1995}
\begin{equation}\label{schema2}
\rho^{n+1}_j=\rho^n_j-\lambda\left(\rho^n_jf(\rho^n_{j+1})V^{n-h}_{j+1}-\rho^n_{j-1}f(\rho^n_j)V^{n-h}_j\right),
\end{equation}
with numerical flux
$$
(\mathcal{F}_{HW})^n_{j+\frac{1}{2}}=\rho^n_jf(\rho^n_{j+1})V^{n-h}_{j+1}.
$$

To prove the convergence of the corresponding approximate solutions, we report the statements for both the schemes and detailed calculations for the LF scheme, while the proofs regarding the HW scheme involve simpler calculations and are sometimes only sketched. In Section~\ref{confrontoschemi}, we propose a numerical comparison between these schemes, showing that the HW scheme is less diffusive than the widely used LF scheme.\\
We start providing some important properties of the LF scheme.

\begin{lemma}[Positivity]\label{posteo} For any $T>0$, if
\begin{equation} \label{eq:CFL1}
    \alpha\geq V\qquad\mbox{ and }\qquad\lambda\leq\frac{1}{\alpha},
\end{equation}
then the LF scheme \eqref{schema} is positivity preserving on $[0,T]\times\R$.
\end{lemma}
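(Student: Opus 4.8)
The plan is to show that the LF update $\rho^{n+1}_j$ in~\eqref{schema} is a nonnegative combination of the old values $\rho^n_{j-1}, \rho^n_j, \rho^n_{j+1}$ whenever they lie in $[0,R]$, and then conclude by induction on $n$. First I would rewrite the scheme in incremental (Harten-type) form: collecting the coefficients of $\rho^n_{j-1}$, $\rho^n_j$, $\rho^n_{j+1}$ in the right-hand side of~\eqref{schema}. The base case is immediate, since $\rho^{-n}_j=\rho^0_j\geq 0$ for $n=1,\dots,h$ and the cell averages of the nonnegative datum $\rho^0$ are nonnegative. For the inductive step, assume $\rho^n_j\in[0,R]$ for all $j$, which also guarantees $0\leq V^{n-h}_j\leq V$ via Assumption~\ref{hp} (since $v$ is valued in $[0,V]$ and the convolution argument $\Delta x\sum_k\omega^k\rho^n_{j+k}$ lies in $[0,R]$ by $J_0=1$).

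The key step is the sign analysis of the three coefficients. I would write
\begin{equation*}
\rho^{n+1}_j=a_{j-1}\rho^n_{j-1}+a_j\rho^n_j+a_{j+1}\rho^n_{j+1},
\end{equation*}
and treat the nonlinearity by factoring $F(\rho^n_{j\pm1})=\rho^n_{j\pm1}f(\rho^n_{j\pm1})$, so that the flux contribution carries a factor of the corresponding density. Concretely, the coefficient of $\rho^n_{j+1}$ has the form $\tfrac{\lambda\alpha}{2}-\tfrac{\lambda}{2}f(\rho^n_{j+1})V^{n-h}_{j+1}$, the coefficient of $\rho^n_{j-1}$ is $\tfrac{\lambda\alpha}{2}+\tfrac{\lambda}{2}f(\rho^n_{j-1})V^{n-h}_{j-1}$, and the coefficient of $\rho^n_j$ is $1-\lambda\alpha$. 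The lower-index coefficient is manifestly nonnegative. For the upper-index and the central coefficients I would invoke the CFL-type hypotheses~\eqref{eq:CFL1}: since $0\le f\le 1$ and $0\le V^{n-h}_{j+1}\le V\le\alpha$, we get $f(\rho^n_{j+1})V^{n-h}_{j+1}\leq\alpha$, which makes the coefficient of $\rho^n_{j+1}$ nonnegative; and $\lambda\leq 1/\alpha$ gives $1-\lambda\alpha\geq 0$ for the central one.

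The main obstacle, though a mild one, is handling the delayed velocity terms $V^{n-h}_{j\pm1}$ correctly, since their nonnegativity and upper bound by $V$ rely on the densities at the earlier time level $n-h$ (which for the first $h$ steps means the initial data) all lying in $[0,R]$. I would therefore phrase the induction so that the hypothesis $0\le\rho^m_j\le R$ is carried for all $m\le n$ and all $j$, ensuring the velocity factors are controlled at every invoked time slice. Once the three coefficients are shown nonnegative under~\eqref{eq:CFL1}, nonnegativity of $\rho^{n+1}_j$ follows as a convex-type combination of nonnegative quantities, closing the induction and yielding positivity preservation on $[0,T]\times\R$.
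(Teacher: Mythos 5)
Your proof is correct and takes essentially the same route as the paper: rewrite \eqref{schema} so that $\rho^{n+1}_j$ is the combination $(1-\lambda\alpha)\rho^n_j+\frac{\lambda}{2}\big(\alpha-f(\rho^n_{j+1})V^{n-h}_{j+1}\big)\rho^n_{j+1}+\frac{\lambda}{2}\big(\alpha+f(\rho^n_{j-1})V^{n-h}_{j-1}\big)\rho^n_{j-1}$, observe that all three coefficients are nonnegative under \eqref{eq:CFL1} since $0\le f\le 1$ and $0\le V^{n-h}_{j\pm 1}\le V\le\alpha$, and induct. The only difference is that you explicitly carry the bound $\rho^m_j\in[0,R]$ in the induction hypothesis to keep $V^{n-h}_j$ well-defined and bounded by $V$ — a point the paper leaves implicit by assuming only nonnegativity at past levels — though strictly speaking propagating that upper bound requires the stronger CFL condition \eqref{CFLtemp} of Lemma~\ref{boundteo}, so the lemma is really meant to be read in tandem with the weak maximum principle.
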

\begin{proof}
Let us assume that $\rho^{n-l}_j\geq 0$ for all $j\in\mathbb{Z}$ and $l=0,\dots,h$. Then, using \eqref{schema} we have
$$
\rho^{n+1}_j=(1-\lambda\alpha)\rho^n_j+\frac{\lambda}{2}\big(\alpha-f(\rho^n_{j+1})V^{n-h}_{j+1}\big)\rho^n_{j+1}+\frac{\lambda}{2}\big(\alpha+f(\rho^n_{j-1})V^{n-h}_{j-1}\big)\rho^n_{j-1}\geq 0,
$$
since all the coefficients are non-negative.
\end{proof}

\begin{lemma}[$\L 1$-bound]\label{L1boundteo}
For any $n\in\mathbb{N}$, under the CFL condition \eqref{eq:CFL1} the approximate solutions constructed by the LF scheme \eqref{schema} satisfy
$$
\norma{\rho^n}_1=\norma{\rho^0}_1,
$$
where $\norma{\rho^n}_1:=\Delta x\sum_j\modulo{\rho^n_j}$ denotes the $\L 1$-norm of $\rho^{\Delta x}(n\Delta t,\cdot)$.
\end{lemma}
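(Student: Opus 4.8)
The plan is to exploit the positivity already established in Lemma~\ref{posteo} together with the conservative (telescoping) structure of the scheme. Since the CFL condition~\eqref{eq:CFL1} guarantees $\rho^n_j\geq 0$ for every $j$ and $n$, we have $\modulo{\rho^n_j}=\rho^n_j$, so that $\norma{\rho^n}_1=\Delta x\sum_j\rho^n_j$ and it suffices to prove that $\sum_j\rho^{n+1}_j=\sum_j\rho^n_j$; the claim then follows by a trivial induction on $n$, with base case $\norma{\rho^0}_1$ itself.

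To obtain this conservation of mass, I would sum the update rule~\eqref{schema} over $j\in\mathbb{Z}$. The right-hand side splits into three contributions. The first is simply $\sum_j\rho^n_j$. The second is the diffusive second difference $\frac{\lambda\alpha}{2}\sum_j(\rho^n_{j+1}-2\rho^n_j+\rho^n_{j-1})$, which vanishes by shift invariance of the sum: reindexing gives $\sum_j\rho^n_{j+1}=\sum_j\rho^n_{j-1}=\sum_j\rho^n_j$. The third is the flux difference $-\frac{\lambda}{2}\sum_j\big(F(\rho^n_{j+1})V^{n-h}_{j+1}-F(\rho^n_{j-1})V^{n-h}_{j-1}\big)$; setting $G_j:=F(\rho^n_j)V^{n-h}_j$ this equals $-\frac{\lambda}{2}\sum_j(G_{j+1}-G_{j-1})$, which telescopes to zero by the same reindexing. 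Hence $\sum_j\rho^{n+1}_j=\sum_j\rho^n_j$, as required.

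The only genuine point requiring care — and the step I expect to be the main obstacle — is justifying these reindexings, i.e.\ ensuring that all the series involved converge absolutely so that shifting the summation index is legitimate. For this one needs $\rho^n\in\ell^1(\mathbb{Z})$ at each level $n$. I would establish this by induction alongside the mass identity: starting from $\rho^0\in\ell^1$ (which holds since $\rho^0\in\L1$), positivity and the bounds $f\leq 1$, $v\leq V$ give $\modulo{G_j}\leq V\rho^n_j$, so $G\in\ell^1$ whenever $\rho^n\in\ell^1$; since~\eqref{schema} expresses $\rho^{n+1}_j$ as a finite linear combination of $\ell^1$ quantities indexed by $j$, it too lies in $\ell^1$, closing the induction. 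Alternatively, if one assumes $\rho^0$ compactly supported, the finite stencil of~\eqref{schema} keeps every iterate finitely supported (the support widening by at most one cell per side per step), which removes all convergence issues and makes the telescoping entirely elementary; the general $\L1$ case then follows by density.
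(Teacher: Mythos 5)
Your proof is correct and follows essentially the same route as the paper's: positivity from Lemma~\ref{posteo} turns the $\L1$-norm into a plain sum, and the conservative form of the scheme makes the flux contributions telescope to zero (the paper writes this compactly as $\sum_j\bigl((\mathcal{F}_{LF})^n_{j+\frac{1}{2}}-(\mathcal{F}_{LF})^n_{j-\frac{1}{2}}\bigr)=0$, which bundles together the diffusive and flux reindexings you treat separately). Your added justification of the reindexing via $\ell^1$ summability is a point the paper silently glosses over, but it does not constitute a different approach.
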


\begin{proof}
Thanks to Lemma \ref{posteo}, we have
\begin{align*}
\norma{\rho^{n+1}}_1=&\Delta x\sum_j\rho^{n+1}_j=\Delta x\sum_j\rho^n_j-\lambda\Delta x\sum_j\left((\mathcal{F}_{LF})^n_{j+\frac{1}{2}}-(\mathcal{F}_{LF})^n_{j-\frac{1}{2}}\right)=\Delta x\sum_j\rho^n_j=\norma{\rho^n}_1.
\end{align*}
\end{proof}

In the sequel, we use the compact notation $\norma{\cdot}$ for $\norma{\cdot}_{\infty}$.

\begin{lemma}[$\L\infty$-bound / weak maximum principle]\label{boundteo}
If $\rho^0_j\in[0,R]$ for all $j\in\mathbb{Z}$ and the condition
\begin{equation}\label{CFLtemp}
\alpha\geq V\left(1+R\norma{f'}\right)\qquad\mbox{ and }\qquad\lambda\leq\frac{1}{\alpha}
\end{equation}
holds, then the numerical solution given by the LF scheme \eqref{schema} satisfies $\rho_j^n \in [0,R]$ for all $j\in\Z$, $
n\in\N$. 
\end{lemma}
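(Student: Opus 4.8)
The plan is to treat the lower and upper bounds separately, arguing by induction on the time level together with the constant extension to negative times. The lower bound requires no new work: the CFL condition \eqref{CFLtemp} is strictly stronger than \eqref{eq:CFL1}, since $\alpha\geq V(1+R\norma{f'})\geq V$, so Lemma~\ref{posteo} applies verbatim and yields $\rho^n_j\geq 0$ for all $j\in\Z$ and $n\in\N$. It therefore only remains to establish the upper bound $\rho^n_j\leq R$.

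For the upper bound I would use the classical monotonicity argument. Suppose inductively that $\rho^m_j\in[0,R]$ for all $j\in\Z$ and all levels $m\leq n$; the base case is the extension $\rho^{-n}_j=\rho^0_j\in[0,R]$ for $n=1,\dots,h$. I would regard the update \eqref{schema} as $\rho^{n+1}_j=H(\rho^n_{j-1},\rho^n_j,\rho^n_{j+1})$, where the velocity factors $V^{n-h}_{j\pm1}$ are treated as frozen parameters. This is legitimate because $\tau=h\Delta t>0$ forces $h\geq 1$, so $V^{n-h}_{j\pm1}$ depends only on the strictly earlier densities $\rho^{n-h}$, which are already controlled by the inductive hypothesis and, crucially, are unaffected when the current values $\rho^n_{j-1},\rho^n_j,\rho^n_{j+1}$ are varied.

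The key step is to verify that $H$ is non-decreasing in each of its three arguments under \eqref{CFLtemp}. Writing $F(\rho)=\rho f(\rho)$, so that $F'(\rho)=f(\rho)+\rho f'(\rho)$, the bounds $0\leq f\leq 1$, $f'\leq 0$ and $0\leq\rho\leq R$ give $\modulo{F'(\rho)}\leq 1+R\norma{f'}$, while $0\leq V^{n-h}_{j\pm1}\leq V$. A direct differentiation then yields, with correlated signs (upper for the index $j-1$, lower for $j+1$),
\begin{align*}
\frac{\partial H}{\partial \rho^n_{j\mp1}}&=\tfrac{\lambda}{2}\big(\alpha\pm F'(\rho^n_{j\mp1})V^{n-h}_{j\mp1}\big)\geq\tfrac{\lambda}{2}\big(\alpha-(1+R\norma{f'})V\big)\geq 0,\\
\frac{\partial H}{\partial \rho^n_{j}}&=1-\lambda\alpha\geq 0,
\end{align*}
where the first line uses $\alpha\geq V(1+R\norma{f'})$ and the second uses $\lambda\leq 1/\alpha$. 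Hence $H$ is monotone in each variable, and since each argument is $\leq R$ we obtain $\rho^{n+1}_j\leq H(R,R,R)$. Evaluating the right-hand side at the constant state, the numerical diffusion $R-2R+R$ vanishes, and $F(R)=Rf(R)=0$ because $f(R)=0$, so the flux difference vanishes as well, leaving $H(R,R,R)=R$. This gives $\rho^{n+1}_j\leq R$ and closes the induction.

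The main obstacle is really just pinning down the correct two-sided bound $\modulo{F'}\leq 1+R\norma{f'}$ and recognizing that it is precisely this bound, rather than a cruder estimate ignoring the saturation factor, that dictates the strengthened CFL condition \eqref{CFLtemp}; once monotonicity is secured, preservation of the state $R$ follows automatically from $F(R)=0$ and the cancellation of the diffusive term, in the same spirit as the positivity argument of Lemma~\ref{posteo}.
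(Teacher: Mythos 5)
Your proposal is correct and follows essentially the same route as the paper: under the strengthened CFL condition the update map is monotone non-decreasing in the three current-time arguments (with the delayed velocities $V^{n-h}_{j\pm1}$ frozen, which is legitimate precisely because $h\geq 1$), and the constant state $R$ is preserved since $F(R)=0$ kills the flux difference and the diffusion cancels. The paper implements the comparison with $(R,R,R)$ via the mean value theorem along the segment joining the two states rather than by direct monotonicity, but this is a cosmetic difference, not a different argument.
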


\begin{proof}
We can rewrite the scheme \eqref{schema} in the form $\rho^{n+1}_j=H(\rho^n_{j-1},\rho^n_j,\rho^n_{j+1},\rho^{n-h}_{j-1},\dots,\rho^{n-h}_{j+N})$, being
\begin{align*}
H(\rho^n_{j-1},\rho^n_j,\rho^n_{j+1},\rho^{n-h}_{j-1},\dots,\rho^{n-h}_{j+N}):=\rho^n_j&+\frac{\lambda\alpha}{2}\left(\rho^n_{j+1}-2\rho^n_j+\rho^n_{j-1}\right)\\
&-\frac{\lambda}{2}\big(F(\rho^n_{j+1})V^{n-h}_{j+1}-F(\rho^n_{j-1})V^{n-h}_{j-1}\big).
\end{align*}
We suppose that $\rho^n_j\leq R$ for $j\in\mathbb{Z}$, and we define
\begin{eqnarray*}
\mathcal{R}^n_j&=&(\rho^n_{j-1},\rho^n_j,\rho^n_{j+1},\rho^{n-h}_{j-1},\dots,\rho^{n-h}_{j+N}),\\
\mathcal{R}^n_{max}&=&(R,R,R,\rho^{n-h}_{j-1},\dots,\rho^{n-h}_{j+N}).
\end{eqnarray*}
Since $H(\mathcal{R}^n_{max})=R$,
by applying the mean value theorem between the points $\mathcal{R}^n_j$ and $\mathcal{R}^n_{max}$, we get
\begin{eqnarray*}
\rho^{n+1}_j&=&H(\mathcal{R}^n_j)=H(\mathcal{R}^n_{max})+\langle\nabla H(\mathcal{R}_\xi),\mathcal{R}^n_j-\mathcal{R}^n_{max}\rangle\\
&=&R+\langle\nabla H(\mathcal{R}_\xi),\mathcal{R}^n_j-\mathcal{R}^n_{max}\rangle,
\end{eqnarray*}
with $\mathcal{R}_\xi=(1-\xi)\mathcal{R}^n_{max}+\xi \mathcal{R}^n_j$, for some $\xi\in[0,1]$.
It is now enough to observe that
\begin{eqnarray}
\frac{\partial H}{\partial\rho^n_{j-1}}&=&\frac{\lambda}{2}\big(\alpha+F'(\rho^n_{j-1})V^{n-h}_{j-1}\big)\geq 0,\label{derivative j-1}\\
\frac{\partial H}{\partial\rho^n_{j}}&=&1-\lambda\alpha\geq 0,\label{derivative j}\\
\frac{\partial H}{\partial\rho^n_{j+1}}&=&\frac{\lambda}{2}\big(\alpha-F'(\rho^n_{j+1})V^{n-h}_{j+1}\big)\geq 0,\label{derivative j+1}
\end{eqnarray}
and that $F'(\rho)=f(\rho)+\rho f'(\rho)$.
The inequalities \eqref{derivative j-1} and \eqref{derivative j+1} holds thanks to the hypothesis on $\alpha$, while \eqref{derivative j} is due to the CFL condition.
Thus, we conclude
\begin{align*}
\langle\nabla H(\mathcal{R}_\xi),&\mathcal{R}^n_j-\mathcal{R}^n_{max}\rangle=\sum_{k=1}^{N+5}\nabla H(\mathcal{R}_\xi)_k(\mathcal{R}^n_j-\mathcal{R}^n_{max})_k\\
=&\ \frac{\partial H}{\partial\rho^n_{j-1}}(\mathcal{R}_\xi)(\rho^n_{j-1}-R)+\frac{\partial H}{\partial\rho^n_{j}}(\mathcal{R}_\xi)(\rho^n_{j}-R)+\frac{\partial H}{\partial\rho^n_{j+1}}(\mathcal{R}_\xi)(\rho^n_{j+1}-R)\leq 0,
\end{align*}
and therefore the statement.
\end{proof}
\begin{remark}[Properties of the HW scheme]\label{HWremark}
It is easy to show that the properties which we proved in Lemma \ref{posteo} and Lemma \ref{L1boundteo} hold also for the HW scheme. Thus, under the CFL condition $\lambda\leq 1/V$ the numerical solution given by \eqref{schema2} is such that
\begin{equation*}
\rho^n_j\geq 0,\qquad\mbox{ for all } j\in\Z,~n\in\N.
\end{equation*}
Under the stronger condition 
$$
\lambda\leq\frac{1}{V\left(1+R\norma{f'}\right)},
$$
then also the weak form of the maximum principle $\rho^n_j \in [0,R]$ holds, and this can be proven exactly as in the proof of Lemma~\ref{boundteo} by defining 
$$
H(\rho^n_{j-1},\rho^n_j,\rho^n_{j+1},\rho^{n-h}_j,\dots,\rho^{n-h}_{j+N}):=\rho^n_j-\lambda\left(\rho^n_jf(\rho^n_{j+1})V^{n-h}_{j+1}-\rho^n_{j-1}f(\rho^n_j)V^{n-h}_j\right),
$$
and taking
\begin{eqnarray*}
\mathcal{R}^n_j&=&(\rho^n_{j-1},\rho^n_j,\rho^n_{j+1},\rho^{n-h}_j,\dots,\rho^{n-h}_{j+N}),\\
\mathcal{R}^n_{max}&=&(\rho^n_{j-1},R,R,\rho^{n-h}_j,\dots,\rho^{n-h}_{j+N}).
\end{eqnarray*}
\end{remark}

\section{Compactness estimates}\label{convergencemono}
In \cite{AmorimColomboTexeira,ChiarelloGoatin2018,ChiarelloGoatinRossi2019} it is shown that, for non-local equations with no delay in time, the approximate solutions constructed using the adapted LF numerical scheme have uniformly bounded total variation. In the following, we derive original global $\BV$ estimates for our delayed model~\eqref{delay_bis}, for both the LF and the HW schemes. We will use such estimates to prove the convergence of the schemes in Section~\ref{sec:convergence}. 

Assuming the stronger CFL condition
\begin{equation}\label{CFL LF}
\alpha\geq V\left(1+R\norma{f'}\right)\qquad\mbox{and}\qquad\lambda\leq\frac{1}{\alpha+V\left(1+R\norma{f'}\right)},
\end{equation}
we start with the $\BV$ estimate in space for the LF scheme. We remark that we intend $\BV\subset \L1$.

\begin{proposition}[Spatial $\BV$-bound for the LF scheme]\label{spaceBVteo}
    Let Assumption~\ref{hp} and the CFL condition \eqref{CFL LF} hold. Then, for any $\rho^0\in \BV(\R;[0,R])$, the numerical solution $\rho^{\Delta x}(t,\cdot)$ given by the LF scheme \eqref{schema} has bounded total variation for $t\in[0,T]$, uniformly in $\Delta x$, for every time horizon $T>0$.
\end{proposition}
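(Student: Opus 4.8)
The plan is to derive a discrete Gronwall-type inequality for the spatial total variation $\sum_j |\rho^{n+1}_{j+1}-\rho^{n+1}_j|$ in terms of $\sum_j |\rho^n_{j+1}-\rho^n_j|$, plus lower-order contributions coming from the delayed convolution speeds $V^{n-h}_j$. First I would write the scheme \eqref{schema} in incremental form by subtracting the update at cell $j$ from that at cell $j+1$, introducing the differences $\Delta_{j+1/2}:=\rho^n_{j+1}-\rho^n_j$. Grouping terms, I expect to obtain an expression of the shape
\[
\rho^{n+1}_{j+1}-\rho^{n+1}_j = \mathcal{A}^n_j\,\Delta_{j+3/2} + \mathcal{B}^n_j\,\Delta_{j+1/2} + \mathcal{C}^n_j\,\Delta_{j-1/2} + (\text{terms in }V^{n-h}_{j+1}-V^{n-h}_j),
\]
where the coefficients $\mathcal{A}^n_j,\mathcal{B}^n_j,\mathcal{C}^n_j$ involve $\alpha$, $\lambda$, and the flux $F$ evaluated at neighbouring states. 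The nonlinearity $F(\rho^n_{j+1})-F(\rho^n_j)$ will be handled by the mean value theorem, writing it as $F'(\xi)\Delta_{j+1/2}$ for an intermediate value $\xi$, so that $F'$ can be absorbed into the coefficients.

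The crucial point is to show the coefficients are nonnegative under the CFL condition \eqref{CFL LF}, so that taking absolute values and summing over $j$ gives, via the triangle inequality and a reindexing of the sums, $\sum_j|\Delta^{n+1}_{j+1/2}|\le \sum_j|\Delta^n_{j+1/2}|$ up to the speed-difference remainder. The sign conditions here mirror \eqref{derivative j-1}--\eqref{derivative j+1} from Lemma~\ref{boundteo}, but the stronger upper bound on $\lambda$ in \eqref{CFL LF} (with the extra $V(1+R\norma{f'})$ in the denominator) is what I expect to be genuinely needed to keep the ``diagonal'' coefficient $\mathcal{B}^n_j$ nonnegative once the $F'$-terms are incorporated; this is presumably why \eqref{CFL LF} is strictly stronger than \eqref{CFLtemp}.

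Next I would estimate the remainder involving $V^{n-h}_{j+1}-V^{n-h}_j$. Since $v\in\C2$ is Lipschitz and $\omega$ is monotone with unit mass, a direct computation using the definition $V^n_j=v(\Delta x\sum_k \omega^k\rho^n_{j+k})$ should bound $|V^{n-h}_{j+1}-V^{n-h}_j|$ in terms of $\Delta x$ and the discrete derivative of the convolution; summing in $j$ and using $\sum_k\omega^k\Delta x=1$ together with the monotonicity $\omega'\le0$ (which, after summation by parts, yields a telescoping/boundary contribution controlled by $\omega(0)$ and $R$) should produce a bound of the form $\const\cdot\Delta x\cdot\tv(\rho^{n-h})$ or simply a constant independent of $\Delta x$. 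Combining, I obtain a recursion $\tv(\rho^{n+1})\le \tv(\rho^n)+\Delta t\,C(1+\tv(\rho^{n-h}))$ or similar; the presence of the delayed index $n-h$ is the novelty here.

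The main obstacle I anticipate is precisely this delayed feedback: the total variation at step $n+1$ is controlled by that at step $n$ and at the delayed step $n-h$, so a naive Gronwall argument does not immediately close. I would handle it by iterating over blocks of $h$ time steps, or by defining a cumulative quantity that sums $\tv(\rho^m)$ over the $h$ most recent levels and showing it satisfies a closed discrete Gronwall inequality, yielding a bound of the form $\tv(\rho^n)\le e^{CT}(\tv(\rho^0)+C')$ on $[0,T]$ uniformly in $\Delta x$. Establishing nonnegativity of all coefficients and correctly tracking the delayed term through the summation are the two places where care is needed; the rest is bookkeeping analogous to the non-delayed estimates of~\cite{ChiarelloGoatin2018}.
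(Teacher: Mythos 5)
Your skeleton (incremental form of the scheme, mean value theorem applied to $F$, nonnegativity of the coefficients under the stronger CFL condition \eqref{CFL LF}, and closing the delayed recursion by iterating over blocks of $h$ steps) coincides with the paper's, and your explanation of why \eqref{CFL LF} is strictly stronger than \eqref{CFLtemp} is correct. However, there is a genuine gap in your treatment of the remainder. After the mean value theorem extracts the terms carrying $\Delta^n_{j+\frac{3}{2}}$, $\Delta^n_{j\pm\frac{1}{2}}$, what is left is
\[
-\tfrac{\lambda}{2}\left[F(\rho^n_{j+1})\left(V^{n-h}_{j+2}-V^{n-h}_{j+1}\right)-F(\rho^n_{j-1})\left(V^{n-h}_{j}-V^{n-h}_{j-1}\right)\right],
\]
i.e.\ bounded factors times \emph{first} differences of the delayed speeds, not multiplied by any $\Delta^n$. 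You propose to bound these first differences directly and sum over $j$, claiming a bound of the form $\const\cdot\Delta x\cdot\tv^{n-h}$ or a constant. Neither works. The best available estimate is
\[
\sum_j \bigl|V^{n-h}_{j+1}-V^{n-h}_j\bigr| \le \norma{v'}\,\Delta x \sum_j\sum_k \omega^k \bigl|\Delta^{n-h}_{j+k+\frac{1}{2}}\bigr| = \norma{v'}\, J_0\, \tv^{n-h},
\]
since $\Delta x\sum_k\omega^k = J_0=1$: the sum of first differences is $\mathcal{O}(1)\cdot\tv^{n-h}$, \emph{not} $\mathcal{O}(\Delta x)\cdot\tv^{n-h}$. (A single jump of height $R$ in $\rho^{n-h}$ gives $\sum_j|V^{n-h}_{j+1}-V^{n-h}_j| = V$ by telescoping, while $\Delta x\,\tv^{n-h}=\Delta x R\to 0$.) Consequently the per-step remainder is $\mathcal{O}(\lambda)\,\tv^{n-h}$ with $\lambda=\Delta t/\Delta x$ a fixed constant; iterated over $T/\Delta t$ steps this blows up as $\Delta t\to 0$, so your target recursion $\tv^{n+1}\le \tv^n+\Delta t\,C(1+\tv^{n-h})$ does not follow from the bounds you describe. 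The fallback "constant independent of $\Delta x$" fails for the same reason: adding $\mathcal{O}(\lambda)$ at each of $T/\Delta t$ steps diverges.

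The missing idea is one more cancellation before taking absolute values: rewrite the remainder as
\[
F(\rho^n_{j+1})\Bigl[\bigl(V^{n-h}_{j+2}-V^{n-h}_{j+1}\bigr)-\bigl(V^{n-h}_{j}-V^{n-h}_{j-1}\bigr)\Bigr]
+\bigl[F(\rho^n_{j+1})-F(\rho^n_{j-1})\bigr]\bigl(V^{n-h}_{j}-V^{n-h}_{j-1}\bigr),
\]
so that the delayed speeds enter only through \emph{second} differences multiplied by bounded factors, or through first differences multiplied by $\Delta^n$-terms. The latter are absorbed into the coefficients, since $\bigl|V^{n-h}_{j}-V^{n-h}_{j-1}\bigr|\le 2\norma{v'}\norma{\omega}R\Delta x$ pointwise and $\lambda\Delta x=\Delta t$; this produces the $\Delta t\,\mathcal{G}\,\tv^n$ contribution. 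The former are where the hypotheses $v\in\C2$ and $\omega'\le 0$ are genuinely used: expanding the second difference yields a $v''$-term of size $\mathcal{O}(\Delta x)\cdot\Delta x\sum_k\omega^{k-1}\bigl|\Delta^{n-h}_{j+k+\frac{1}{2}}\bigr|$ plus a $v'$-term in which the signed, telescoping differences $\omega^{k-1}-\omega^k\ge 0$ hit $\Delta^{n-h}$; only these second differences sum over $j$ to $\mathcal{O}(\Delta x)\,\tv^{n-h}$, which after multiplication by $\lambda$ gives the crucial $\Delta t\,\mathcal{H}\,\tv^{n-h}$ term (this is exactly the paper's estimate of the term $(\ast)$ and the bound \eqref{boundH}). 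Without this step the delayed Gronwall recursion cannot be closed uniformly in $\Delta x$; once it is available, your block-iteration over $h$ steps using $\tv^{-h}=\cdots=\tv^0$ finishes the proof exactly as in the paper.
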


\begin{proof}
    Let us set
    $
    \Delta^n_{j+\frac{1}{2}}:=\rho^n_{j+1}-\rho^n_j.
    $
Using the mean value theorem, we obtain
\begin{align}
    \Delta^{n+1}_{j+\frac{1}{2}}=& \ \Delta^n_{j+\frac{1}{2}}+\frac{\lambda\alpha}{2}\left(\Delta^n_{j+\frac{3}{2}}-2\Delta^n_{j+\frac{1}{2}}+\Delta^n_{j-\frac{1}{2}}\right)\nonumber\\
        &-\frac{\lambda}{2}\left[F(\rho^n_{j+2})V^{n-h}_{j+2}-F(\rho^n_{j+1})V^{n-h}_{j+1}-F(\rho^n_j)V^{n-h}_j+F(\rho^n_{j-1})V^{n-h}_{j-1}\right]\nonumber\\
        =& \ \Delta^n_{j+\frac{1}{2}}+\frac{\lambda\alpha}{2}\left(\Delta^n_{j+\frac{3}{2}}-2\Delta^n_{j+\frac{1}{2}}+\Delta^n_{j-\frac{1}{2}}\right)\nonumber\\
        &-\frac{\lambda}{2}\left[F'(\tilde{\rho}^n_{j+\frac{3}{2}})V^{n-h}_{j+2}\Delta^n_{j+\frac{3}{2}}+F(\rho^n_{j+1})\left(V^{n-h}_{j+2}-V^{n-h}_{j+1}\right)\right]\nonumber\\  
        &+\frac{\lambda}{2}\left[F'(\tilde{\rho}^n_{j-\frac{1}{2}})V^{n-h}_j\Delta^n_{j-\frac{1}{2}}+F(\rho^n_{j-1})\left(V^{n-h}_j-V^{n-h}_{j-1}\right)\right]\nonumber\\
        =& \ (1-\lambda\alpha)\Delta^n_{j+\frac{1}{2}}\nonumber\\
        &+\frac{\lambda}{2}\left(\alpha-F'(\tilde{\rho}^n_{j+\frac{3}{2}})V^{n-h}_{j+2}\right)\Delta^n_{j+\frac{3}{2}}+\frac{\lambda}{2}\left(\alpha+F'(\tilde{\rho}^n_{j-\frac{1}{2}})V^{n-h}_j\right)\Delta^n_{j-\frac{1}{2}}\nonumber\\
        &-\frac{\lambda}{2}\left[F(\rho^n_{j+1})\left(V^{n-h}_{j+1}-V^{n-h}_j\right)-F(\rho^n_{j-1})\left(V^{n-h}_j-V^{n-h}_{j-1}\right)\right]\nonumber\\
        &-\frac{\lambda}{2}F(\rho^n_{j+1})\left[\left(V^{n-h}_{j+2}-V^{n-h}_{j+1}\right)-\left(V^{n-h}_{j+1}-V^{n-h}_j\right)\right]\nonumber\\
        =& \ (1-\lambda\alpha)\Delta^n_{j+\frac{1}{2}}\nonumber\\
        &+\frac{\lambda}{2}\left(\alpha-F'(\tilde{\rho}^n_{j+\frac{3}{2}})V^{n-h}_{j+2}\right)\Delta^n_{j+\frac{3}{2}}+\frac{\lambda}{2}\left(\alpha+F'(\tilde{\rho}^n_{j-\frac{1}{2}})V^{n-h}_j\right)\Delta^n_{j-\frac{1}{2}}\nonumber\\
        &-\frac{\lambda}{2}\left(F'(\tilde{\rho}^n_{j+\frac{1}{2}})\Delta^n_{j+\frac{1}{2}}+F'(\tilde{\rho}^n_{j-\frac{1}{2}})\Delta^n_{j-\frac{1}{2}}\right)\left(V^{n-h}_j-V^{n-h}_{j-1}\right)\nonumber\\
                &-\frac{\lambda}{2}F(\rho^n_{j+1})\left[\left(V^{n-h}_{j+1}-V^{n-h}_j\right)-\left(V^{n-h}_j-V^{n-h}_{j-1}\right)\right]\nonumber\\
        &-\frac{\lambda}{2}F(\rho^n_{j+1})\left[\left(V^{n-h}_{j+2}-V^{n-h}_{j+1}\right)-\left(V^{n-h}_{j+1}-V^{n-h}_j\right)\right]\nonumber\\
        =& \left[1-\lambda\alpha-\frac{\lambda}{2}F'(\tilde{\rho}^n_{j+\frac{1}{2}})\left(V^{n-h}_j-V^{n-h}_{j-1}\right)\right]\Delta^n_{j+\frac{1}{2}}\nonumber\\
        &+\frac{\lambda}{2}\left(\alpha-F'(\tilde{\rho}^n_{j+\frac{3}{2}})V^{n-h}_{j+2}\right)\Delta^n_{j+\frac{3}{2}}\nonumber\\
        &+\frac{\lambda}{2}\left[\alpha+F'(\tilde{\rho}^n_{j-\frac{1}{2}})V^{n-h}_j-F'(\tilde{\rho}^n_{j-\frac{1}{2}})\left(V^{n-h}_j-V^{n-h}_{j-1}\right)\right]\Delta^n_{j-\frac{1}{2}}\nonumber\\
        &-\frac{\lambda}{2}F(\rho^n_{j+1})\left[\left(V^{n-h}_{j+1}-V^{n-h}_j\right)-\left(V^{n-h}_j-V^{n-h}_{j-1}\right)\right]\label{b3}\\
        &-\frac{\lambda}{2}F(\rho^n_{j+1})\underbrace{\left[\left(V^{n-h}_{j+2}-V^{n-h}_{j+1}\right)-\left(V^{n-h}_{j+1}-V^{n-h}_j\right)\right]}_{=(\ast)},\label{b2}
        \end{align}
with $\tilde{\rho}^n_{j+\frac{1}{2}}$ between $\rho^n_j$ and $\rho^n_{j+1}$ for all $j\in\mathbb{Z}$.
The term $(\ast)$ in \eqref{b2} can be estimated as
    \begin{eqnarray*}
        (\ast)&=&\left(V^{n-h}_{j+2}-V^{n-h}_{j+1}\right)-\left(V^{n-h}_{j+1}-V^{n-h}_j\right)=v'(\xi_{j+\frac{3}{2}})\Delta x\left(\sum_{k=0}^{+\infty}\omega^k\rho^{n-h}_{j+k+2}-\sum_{k=0}^{+\infty}\omega^k\rho^{n-h}_{j+k+1}\right)\\
        &&-\ v'(\xi_{j+\frac{1}{2}})\Delta x\left(\sum_{k=0}^{+\infty}\omega^k\rho^{n-h}_{j+k+1}-\sum_{k=0}^{+\infty}\omega^k\rho^{n-h}_{j+k}\right)\\
        &=&v'(\xi_{j+\frac{3}{2}})\Delta x\left(\sum_{k=1}^{+\infty}(\omega^{k-1}-\omega^k)\rho^{n-h}_{j+k+1}-\omega^0\rho^{n-h}_{j+1}\right)\\
        &&-\ v'(\xi_{j+\frac{1}{2}})\Delta x\left(\sum_{k=1}^{+\infty}(\omega^{k-1}-\omega^k)\rho^{n-h}_{j+k}-\omega^0\rho^{n-h}_j\right)\\
        &=&\left[v'(\xi_{j+\frac{3}{2}})-v'(\xi_{j+\frac{1}{2}})\right]\Delta x\left(\sum_{k=1}^{+\infty}(\omega^{k-1}-\omega^k)\rho^{n-h}_{j+k+1}-\omega^0\rho^{n-h}_{j+1}\right)\\
        &&+\ v'(\xi_{j+\frac{1}{2}})\Delta x\left(\sum_{k=1}^{+\infty}(\omega^{k-1}-\omega^k)\left(\rho^{n-h}_{j+k+1}-\rho^{n-h}_{j+k}\right)-\omega^0\left(\rho^{n-h}_{j+1}-\rho^{n-h}_j\right)\right),
    \end{eqnarray*}
    with $\xi_{j+\frac{3}{2}}$ between $\Delta x\sum_{k=0}^{+\infty}\omega^k\rho^{n-h}_{j+k+2}$ and $\Delta x\sum_{k=0}^{+\infty}\omega^k\rho^{n-h}_{j+k+1}$ and  $\xi_{j+\frac{1}{2}}$ between $\Delta x\sum_{k=0}^{+\infty}\omega^k\rho^{n-h}_{j+k+1}$ and $\Delta x\sum_{k=0}^{+\infty}\omega^k\rho^{n-h}_{j+k}$.
    Thus, we can write
    \begin{eqnarray*}
        (\ast)&=&v''(\tilde{\xi}_{j+1})\left[\xi_{j+\frac{3}{2}}-\xi_{j+\frac{1}{2}}\right]\Delta x\sum_{k=1}^{+\infty}\omega^{k-1}\Delta^{n-h}_{j+k+\frac{1}{2}}+\\
        &&+\ v'(\xi_{j+\frac{1}{2}})\Delta x\left(\sum_{k=1}^{+\infty}(\omega^{k-1}-\omega^k)\Delta^{n-h}_{j+k+\frac{1}{2}}-\omega^0\Delta^{n-h}_{j+\frac{1}{2}}\right),
    \end{eqnarray*}
    where $\tilde{\xi}_{j+1}$ is between $\xi_{j+\frac{3}{2}}$ and $\xi_{j+\frac{1}{2}}$.
    For some $\theta,\mu\in[0,1]$, we compute
    \begin{align*}
        \xi_{j+\frac{3}{2}}-\xi_{j+\frac{1}{2}}=&\ \theta\Delta x\sum_{k=0}^{+\infty}\omega^k\rho^{n-h}_{j+k+2}+(1-\theta)\Delta x\sum_{k=0}^{+\infty}\omega^k\rho^{n-h}_{j+k+1}\\
        &-\mu\Delta x\sum_{k=0}^{+\infty}\omega^k\rho^{n-h}_{j+k+1}-(1-\mu)\Delta x\sum_{k=0}^{+\infty}\omega^k\rho^{n-h}_{j+k}\\
        =&\ \Delta x\sum_{k=1}^{+\infty}\left[\theta\omega^{k-1}+(1-\theta)\omega^k-\mu\omega^k-(1-\mu)\omega^{k+1}\right]\rho^{n-h}_{j+k+1}\\
        &+\Delta x\left[(1-\theta)\omega^0\rho^{n-h}_{j+1}-\mu\omega^0\rho^{n-h}_{j+1}-(1-\mu)\left(\omega^0\rho^{n-h}_j+\omega^1\rho^{n-h}_{j+1}\right)\right].
    \end{align*}
    Taking the absolute values and using Lemma \ref{boundteo}, we get
    \begin{eqnarray}
        \modulo{\xi_{j+\frac{3}{2}}-\xi_{j+\frac{1}{2}}}&\leq&\Delta x\left[\sum_{k=1}^{+\infty}\big[\theta\omega^{k-1}+(1-\theta)\omega^k-\mu\omega^k-(1-\mu)\omega^{k+1}\big]+4\omega^0\right]R\nonumber\\
        &\leq&\Delta x\left[\sum_{k=1}^{+\infty}[\omega^{k-1}-\omega^{k+1}]+4\omega^0\right]R\leq6\Delta x\norma{\omega}R,\label{delta xi}
    \end{eqnarray}
    where we used the monotonicity of $\omega$ to get $\norma{\omega}=\omega^0$ and
    $$
    \theta\omega^{k-1}+(1-\theta)\omega^k-\mu\omega^k-(1-\mu)\omega^{k+1}\geq 0.
    $$
Therefore, we obtain the bound 
\begin{equation}
\sum_j(|\eqref{b3}|+|\eqref{b2}|)\leq\Delta t\mathcal{H}\sum_j\modulo{\Delta^{n-h}_{j+\frac{1}{2}}},\label{boundH}
\end{equation}
for $\mathcal{H}=R\norma{\omega}\left(6\norma{v''}J_0R+2\norma{v'}\right)$.   
Thus, taking the absolute values and summing on $j\in\mathbb{Z}$ in the bound of $\Delta^{n+1}_{j+\frac{1}{2}}$ at the beginning of the proof, we get
\begin{align*}
    \sum_j\modulo{\Delta^{n+1}_{j+\frac{1}{2}}}\leq& \ \sum_j\left[1-\lambda\left(\alpha+\frac{1}{2}F'(\tilde{\rho}^n_{j+\frac{1}{2}})\left(V^{n-h}_j-V^{n-h}_{j-1}\right)\right)\right] \modulo{\Delta^n_{j+\frac{1}{2}}}\\
    &+\frac{\lambda}{2}\sum_j\left[\alpha-F'(\tilde{\rho}^n_{j+\frac{3}{2}})V^{n-h}_{j+2}\right]\modulo{\Delta^n_{j+\frac{3}{2}}}\\
    &+\frac{\lambda}{2}\sum_j\left[\alpha+F'(\tilde{\rho}^n_{j-\frac{1}{2}})V^{n-h}_j-F'(\tilde{\rho}^n_{j-\frac{1}{2}})\left(V^{n-h}_j-V^{n-h}_{j-1}\right)\right]\modulo{\Delta^n_{j-\frac{1}{2}}}\\
    &+\Delta t\mathcal{H}\sum_j\modulo{\Delta ^{n-h}_{j+\frac{1}{2}}},
\end{align*}
where all the coefficients are positive thanks to the CFL condition and the hypothesis on the coefficient $\alpha$.
Due to some cancellations, this becomes
\begin{align*}
    \sum_j\modulo{\Delta^{n+1}_{j+\frac{1}{2}}}\leq&\ \sum_j\left[1+\frac{\lambda}{2}F'(\tilde{\rho}^n_{j+\frac{1}{2}})\left(V^{n-h}_{j-1}-V^{n-h}_j\right)\right]\modulo{\Delta^n_{j+\frac{1}{2}}}\\
    &+\frac{\lambda}{2}\sum_jF'(\tilde{\rho}^n_{j+\frac{1}{2}})\left(V^{n-h}_j-V^{n-h}_{j+1}\right)\modulo{\Delta^n_{j+\frac{1}{2}}}+\Delta t\mathcal{H}\sum_j\modulo{\Delta ^{n-h}_{j+\frac{1}{2}}}\\
    \leq& \ \sum_j\left[1+\frac{\lambda}{2}F'(\tilde{\rho}^n_{j+\frac{1}{2}})\left(V^{n-h}_{j-1}-V^{n-h}_{j+1}\right)\right]\modulo{\Delta^n_{j+\frac{1}{2}}}+\Delta t\mathcal{H}\sum_j\modulo{\Delta ^{n-h}_{j+\frac{1}{2}}}.
\end{align*}
Using the mean value theorem, the monotonicity of $\omega$ and Lemma~\ref{boundteo}, one can bound
\begin{eqnarray}
        \modulo{V^{n-h}_{j-1}-V^{n-h}_{j+1}}&\leq&\modulo{V^{n-h}_{j-1}-V^{n-h}_j}+\modulo{V^{n-h}_j-V^{n-h}_{j+1}}\nonumber\\
        &\leq&\norma{v'}\Delta x\left|\omega^0\rho^{n-h}_{j-1}+\sum_{k=1}^{+\infty}(\omega^k-\omega^{k-1})\rho^{n-h}_{j+k-1}\right|\nonumber\\
        &&+\norma{v'}\Delta x\left|\omega^0\rho^{n-h}_j+\sum_{k=1}^{+\infty}(\omega^k-\omega^{k-1})\rho^{n-h}_{j+k}\right|\label{stimamodulo_parziale}\\
        &\leq&4 \norma{v'}\norma{\omega}R\Delta x.\label{stimamodulo}
    \end{eqnarray}
     It follows that
\begin{equation}\label{inequality}
\sum_j\modulo{\Delta^{n+1}_{j+\frac{1}{2}}}\leq(1+\Delta t\mathcal{G})\sum_j\modulo{\Delta^n_{j+\frac{1}{2}}}+\Delta t\mathcal{H}\sum_j\modulo{\Delta^{n-h}_{j+\frac{1}{2}}},
\end{equation}
being $\mathcal{G}=2\norma{v'}\norma{\omega}R\left(1+R\norma{f'}\right)$. \\
Setting $\tv^n:=\sum_j\modulo{\Delta^{n}_{j+\frac{1}{2}}}$ and observing that $\tv^{-h}=\dots=\tv^{-1}=\tv^0$, from \eqref{inequality} we get
\begin{align*}
    \tv^1&\leq(1+\Delta t\mathcal{G})\tv^0+\Delta t\mathcal{H}\tv^0,\\
    \tv^2&\leq(1+\Delta t\mathcal{G})\tv^1+\Delta t\mathcal{H}\tv^0\\
    &\leq (1+\Delta t\mathcal{G})^2\tv^0 +\Delta t\mathcal{H}(1+(1+\Delta t\mathcal{G}))\tv^0 ,\\
    & ~~\vdots\\
    \tv^h& \leq (1+\Delta t\mathcal{G})^h\tv^0 +\Delta t\mathcal{H}\tv^0\sum_{k=0}^{h-1}(1+\Delta t\mathcal{G})^k\\
    &\leq (1+\Delta t\mathcal{G})^h\tv^0 +
    \left[(1+\Delta t\mathcal{M})^h -1\right] \tv^0,
\end{align*}
with
\begin{equation} \label{eq:M}
    \mathcal{M}:=\max\{\mathcal{H},\mathcal{G}\}
    = 2\,\norma{\omega}R\left(\norma{v'}+R\max\left\{
3\norma{v''}J_0,
\norma{v'}\norma{f'}
    \right\}\right).
\end{equation}
Recalling that $h=\tau/\Delta t$ and passing to the limit as $\Delta t\to 0$, we obtain
\begin{equation*}
    \tv\left(\rho^{\Delta x}(k\Delta t,\cdot)\right)\leq 
    \left( e^{\mathcal{G}\tau} + e^{\mathcal{M}\tau} -1 \right) \tv(\rho^0)
    \leq 
    \left( 2e^{\mathcal{M}\tau} -1 \right) \tv(\rho^0),
    \qquad k=0,\ldots,h.
\end{equation*}
Iterating the above argument for $k=(n-1)h+1,\ldots,nh$, we get
\begin{equation*}
    \tv\left(\rho^{\Delta x}(k\Delta t,\cdot)\right)\leq 
    \left( 2e^{\mathcal{M}\tau} -1 \right)^n \tv(\rho^0),
\end{equation*}
and, in general, setting $T= t + \lfloor T/\tau \rfloor \tau$
\begin{equation} \label{eq:TVestimate}
    \tv\left(\rho^{\Delta x}(T,\cdot)\right)\leq 
\left( 2e^{\mathcal{M}t} -1 \right)
    \left( 2e^{\mathcal{M}\tau} -1 \right)^{\lfloor T/\tau \rfloor} \tv(\rho^0).
\end{equation}  
Observe that
\[
\lim_{\tau\to 0}
\left( 2e^{\mathcal{M}(T-\lfloor T/\tau \rfloor \tau)} -1 \right)
    \left( 2e^{\mathcal{M}\tau} -1 \right)^{\lfloor T/\tau \rfloor}
    = e^{2 \mathcal{M} T},
\]
thus we recover the estimate for the non-delayed model \cite[Proposition 2]{ChiarelloGoatin2018}.
\end{proof}

\begin{remark}[Dependence on the parameters]\label{relazioneoss}
    The bound \eqref{eq:TVestimate} clearly indicates that the bigger the product $\mathcal{M}\tau$ is, the faster the total variation increases.
    We remark that the hypotheses on the convolution kernel imply that the value of $\norma{\omega}$ is linked to the look-ahead distance $L$. Indeed, if $\omega\in\C{1}([0,1];\R^+)$, then we can choose a re-scaled kernel function such as
    $$
    \omega_L(x)=\frac{1}{L}\omega\left(\frac{x}{L}\right).
    $$
    Thus, by~\eqref{eq:M}, we deduce that the positive constant $\mathcal{M}$ is dimensionally the inverse of time and it is such that $\mathcal{M}\sim 1/L$.
\end{remark}

Now, we assume the CFL condition
\begin{equation}\label{CFL HW}
\lambda\leq\frac{1}{V\left(1+R\norma{f'}\right)},
\end{equation}
and, using the same notation as above, we prove that similar $\BV$ estimates hold for the HW scheme.

\begin{proposition}[Spatial $\BV$-bound for the HW scheme]\label{spaceBVteo2}
    Let Assumption~\ref{hp} and the CFL condition \eqref{CFL HW} hold. Then, for any $\rho^0\in \BV(\R;[0,R])$, the numerical solution $\rho^{\Delta x}(t,\cdot)$ given by the HW scheme \eqref{schema2} has bounded total variation for $t\in[0,T]$, uniformly in $\Delta x$, for every time horizon $T>0$.
\end{proposition}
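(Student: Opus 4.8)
The plan is to mirror the proof of Proposition~\ref{spaceBVteo}, tracking the evolution of the spatial differences $\Delta^n_{j+\frac{1}{2}}:=\rho^n_{j+1}-\rho^n_j$ under the HW update \eqref{schema2}, while exploiting that the HW flux $\mathcal{F}^n_{j+\frac{1}{2}}:=\rho^n_j f(\rho^n_{j+1})V^{n-h}_{j+1}$ carries no numerical viscosity, which lightens the algebra. Writing \eqref{schema2} at indices $j$ and $j+1$ and subtracting, I get
$$
\Delta^{n+1}_{j+\frac{1}{2}}=\Delta^n_{j+\frac{1}{2}}-\lambda\left(\mathcal{F}^n_{j+\frac{3}{2}}-2\mathcal{F}^n_{j+\frac{1}{2}}+\mathcal{F}^n_{j-\frac{1}{2}}\right).
$$
First I would decompose each first difference of the flux, by adding and subtracting intermediate terms and applying the mean value theorem to $f$, as
$$
\mathcal{F}^n_{j+\frac{1}{2}}-\mathcal{F}^n_{j-\frac{1}{2}}=V^{n-h}_{j+1}\left[\rho^n_j f'(\tilde{\rho}^n_{j+\frac{1}{2}})\Delta^n_{j+\frac{1}{2}}+f(\rho^n_j)\Delta^n_{j-\frac{1}{2}}\right]+\rho^n_{j-1}f(\rho^n_j)\left(V^{n-h}_{j+1}-V^{n-h}_j\right),
$$
with $\tilde{\rho}^n_{j+\frac{1}{2}}$ between $\rho^n_j$ and $\rho^n_{j+1}$.

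Substituting and collecting the coefficients of $\Delta^n_{j-\frac{1}{2}}$, $\Delta^n_{j+\frac{1}{2}}$ and $\Delta^n_{j+\frac{3}{2}}$, I would check their signs. Using $f'\leq 0$, $0\leq f\leq 1$, $0\leq V^{n-h}_j\leq V$ and the weak maximum principle $\rho^n_j\in[0,R]$ from Remark~\ref{HWremark}, the off-diagonal coefficients are manifestly nonnegative, while the diagonal one equals $1-\lambda\big(V^{n-h}_{j+2}f(\rho^n_{j+1})-V^{n-h}_{j+1}\rho^n_j f'(\tilde{\rho}^n_{j+\frac{1}{2}})\big)$, whose bracket lies in $[0,V(1+R\norma{f'})]$, so the CFL condition \eqref{CFL HW} keeps the whole coefficient in $[0,1]$. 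The remaining contributions, gathering the velocity increments, take the form
$$
-\lambda\left[\rho^n_j f(\rho^n_{j+1})\left(V^{n-h}_{j+2}-V^{n-h}_{j+1}\right)-\rho^n_{j-1}f(\rho^n_j)\left(V^{n-h}_{j+1}-V^{n-h}_j\right)\right],
$$
which I would split into a genuine second difference $V^{n-h}_{j+2}-2V^{n-h}_{j+1}+V^{n-h}_j$ and the product of a first difference of $V^{n-h}$ with a difference of $\rho^n f(\rho^n)$.

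The decisive step, and the one requiring the most care, is the estimate of these velocity-increment terms, exactly as for the term $(\ast)$ in Proposition~\ref{spaceBVteo}: the second difference is controlled by a nested mean value argument on $v$ (producing a $\norma{v''}$ contribution) together with the monotonicity of $\omega$ and the $\L\infty$ bound, yielding a term proportional to $\sum_k\modulo{\Delta^{n-h}_{j+k+\frac{1}{2}}}$; the first-difference part is bounded by $\norma{v'}\norma{\omega}R\,\Delta x$ as in \eqref{stimamodulo}, so that after multiplication by $\lambda$ it contributes an $O(\Delta t)$ factor against $\sum_j\modulo{\Delta^n_{j+\frac{1}{2}}}$. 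Taking absolute values and summing over $j\in\Z$, the nonnegative coefficients telescope and I obtain an inequality of the same shape as \eqref{inequality}, namely
$$
\tv^{n+1}\leq(1+\Delta t\,\mathcal{G})\,\tv^n+\Delta t\,\mathcal{H}\,\tv^{n-h},
$$
with constants $\mathcal{G},\mathcal{H}$ of the same type as in the LF case. Finally I would resolve this delayed recursion verbatim as before — using $\tv^{-h}=\dots=\tv^0$, iterating first over $k=0,\dots,h$ and then block by block — to reach the bound \eqref{eq:TVestimate} with $\mathcal{M}=\max\{\mathcal{G},\mathcal{H}\}$, uniform in $\Delta x$ on every finite horizon $T$. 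The main obstacle is purely the velocity-increment estimate; everything else is a sign-checking exercise made easier by the absence of the viscosity term.
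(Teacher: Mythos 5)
Your proposal is correct and mirrors the paper's own proof: the same second-difference expansion of the HW flux, the same splitting into nonnegative diagonal/off-diagonal coefficients (controlled by \eqref{CFL HW}) plus the two velocity-increment terms handled exactly as the term $(\ast)$ and \eqref{stimamodulo} in Proposition~\ref{spaceBVteo}, leading to the delayed recursion $\tv^{n+1}\leq(1+\Delta t\,\mathcal{G})\tv^n+\Delta t\,\mathcal{H}\,\tv^{n-h}$ and the bound \eqref{eq:TVestimate}. The only cosmetic difference is which intermediate products you add and subtract in the mean value step (your coefficients carry $f(\rho^n_{j+1})$ and $\rho^n_j f'$ where the paper has $f(\rho^n_{j+2})$ and $\rho^n_{j-1}f'$), which changes nothing in the sign checks, the telescoping, or the constants.
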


\begin{proof}
Our aim is to prove that the sequence $\tv^n=\sum_j\modulo{\Delta^n_{j+\frac{1}{2}}}$, given by \eqref{schema2}, satisfies the relation
\begin{equation*}
\tv^{n+1}\leq(1+\Delta t\mathcal{G})\tv^n+\Delta t\mathcal{H}\tv^{n-h}
\end{equation*} 
because, exactly as we did for the sequence \eqref{inequality} to obtain the bound \eqref{eq:TVestimate}, from this one can prove that for any $T=t+\lfloor T/\tau\rfloor\tau$ it holds
$$
    \tv\left(\rho^{\Delta x}(T,\cdot)\right)\leq 
\left( 2e^{\mathcal{M}t} -1 \right)
    \left( 2e^{\mathcal{M}\tau} -1 \right)^{\lfloor T/\tau \rfloor} \tv(\rho^0).
    $$
    Thus, from \eqref{schema2} and the mean value theorem we get
\begin{align}
\Delta^{n+1}_{j+\frac{1}{2}}=&\ \Delta^n_{j+\frac{1}{2}}-\lambda\left(\rho^n_{j+1}f(\rho^n_{j+2})V^{n-h}_{j+2}-2\rho^n_jf(\rho^n_{j+1})V^{n-h}_{j+1}+\rho^n_{j-1}f(\rho^n_j)V^{n-h}_j\right)\nonumber\\
=&\ \Delta ^n_{j+\frac{1}{2}}\nonumber\\
&-\lambda\left[\left(\rho^n_{j+1}f(\rho^n_{j+2})-\rho^n_jf(\rho^n_{j+1})\right)V^{n-h}_{j+2}-\left(\rho^n_jf(\rho^n_{j+1})-\rho^n_{j-1}f(\rho^n_j)\right)V^{n-h}_{j+1}\right]\nonumber\\
&-\lambda\left[\rho^n_jf(\rho^n_{j+1})\left(V^{n-h}_{j+2}-V^{n-h}_{j+1}\right)-\rho^n_{j-1}f(\rho^n_j)\left(V^{n-h}_{j+1}-V^{n-h}_j\right)\right]\nonumber\\
=&\ \Delta ^n_{j+\frac{1}{2}}\nonumber\\
&-\lambda\left[\left(\rho^n_{j+1}f(\rho^n_{j+2})-\rho^n_jf(\rho^n_{j+1})\right)V^{n-h}_{j+2}-\left(\rho^n_jf(\rho^n_{j+1})-\rho^n_{j-1}f(\rho^n_j)\right)V^{n-h}_{j+1}\right]\nonumber\\
&-\lambda\left(\rho^n_jf(\rho^n_{j+1})-\rho^n_{j-1}f(\rho^n_j)\right)\left(V^{n-h}_{j+2}-V^{n-h}_{j+1}\right)\nonumber\\
&-\lambda\rho^n_{j-1}f(\rho^n_j)\left[\left(V^{n-h}_{j+2}-V^{n-h}_{j+1}\right)-\left(V^{n-h}_{j+1}-V^{n-h}_j\right)\right]\nonumber\\
=&\ \Delta ^n_{j+\frac{1}{2}}\nonumber\\
&-\lambda\left(f(\rho^n_{j+2})\Delta^n_{j+\frac{1}{2}}+\rho^n_jf'(\tilde{\rho}^n_{j+\frac{3}{2}})\Delta^n_{j+\frac{3}{2}}\right)V^{n-h}_{j+2}\nonumber\\
&+\lambda\left(f(\rho^n_{j+1})\Delta^n_{j-\frac{1}{2}}+\rho^n_{j-1}f'(\tilde{\rho}^n_{j+\frac{1}{2}})\Delta^n_{j+\frac{1}{2}}\right)V^{n-h}_{j+1}\nonumber\\
&-\lambda\left(f(\rho^n_{j+1})\Delta^n_{j-\frac{1}{2}}+\rho^n_{j-1}f'(\tilde{\rho}^n_{j+\frac{1}{2}})\Delta^n_{j+\frac{1}{2}}\right)\left(V^{n-h}_{j+2}-V^{n-h}_{j+1}\right)\nonumber\\
&-\lambda\rho^n_{j-1}f(\rho^n_j)\left[\left(V^{n-h}_{j+2}-V^{n-h}_{j+1}\right)-\left(V^{n-h}_{j+1}-V^{n-h}_j\right)\right]\nonumber\\
=&\left[1-\lambda\left(f(\rho^n_{j+2})V^{n-h}_{j+2}-\rho^n_{j-1}f'(\tilde{\rho}^n_{j+\frac{1}{2}})V^{n-h}_{j+1}\right)\right]\Delta ^n_{j+\frac{1}{2}}\label{1}\\
&-\lambda\rho^n_jf'(\tilde{\rho}^n_{j+\frac{3}{2}})V^{n-h}_{j+2}\Delta^n_{j+\frac{3}{2}}+\lambda f(\rho^n_{j+1})V^{n-h}_{j+1}\Delta^n_{j-\frac{1}{2}}\label{2}\\
&-\lambda\left(f(\rho^n_{j+1})\Delta^n_{j-\frac{1}{2}}+\rho^n_{j-1}f'(\tilde{\rho}^n_{j+\frac{1}{2}})\Delta^n_{j+\frac{1}{2}}\right)\left(V^{n-h}_{j+2}-V^{n-h}_{j+1}\right)\label{3}\\
&-\lambda\rho^n_{j-1}f(\rho^n_j)\left[\left(V^{n-h}_{j+2}-V^{n-h}_{j+1}\right)-\left(V^{n-h}_{j+1}-V^{n-h}_j\right)\right]\label{4}.
\end{align}
Thanks to the CFL condition \eqref{CFL HW} and some cancellations, we have
\begin{equation*}
\sum_j\left(\modulo{\eqref{1}}+\modulo{\eqref{2}}\right)\leq\tv^n.
\end{equation*}
Moreover, using Remark \ref{HWremark}, as in \eqref{boundH} one can show that
\begin{equation*}
\sum_j\modulo{\eqref{4}}\leq\Delta t\mathcal{H}\tv^{n-h}.
\end{equation*}
To conclude the proof, recalling that as in \eqref{stimamodulo} we can prove for every $j\in\Z$ the bound 
\begin{align}
\modulo{V^{n-h}_{j+1}-V^{n-h}_j}& \leq \norma{v'}\Delta x\modulo{\sum_{k=1}^{+\infty}(\omega^{k-1}-\omega^k)\rho^{n-h}_{j+k}-\omega^0\rho^{n-h}_j}\label{stimamodulo_parziale2}\\
&\leq 2\norma{v'}\norma{\omega}R\Delta x\nonumber,
\end{align}
 we write
$$
\sum_j\modulo{\eqref{3}}\leq\lambda\sum_j\modulo{V^{n-h}_{j+3}-V^{n-h}_{j+2}}\modulo{\Delta^n_{j+\frac{1}{2}}}+\lambda R\norma{f'}\sum_j\modulo{V^{n-h}_{j+2}-V^{n-h}_{j+1}}\modulo{\Delta^n_{j+\frac{1}{2}}}\leq\Delta t\mathcal{G}\tv^n.
$$
\end{proof}

To prove an estimate for the discrete total variation in space and time, we need the following result.

\begin{lemma}[$\L1$ Lipschitz continuity in time]\label{L1contteo}
Let Assumption \ref{hp} and the CFL condition \eqref{CFL LF} hold. Then, for any $\rho^0\in \BV(\R;[0,R])$, the approximate solution constructed via the LF scheme~\eqref{schema} satisfies 
\begin{equation}\label{L1 LF}
\norma{\rho^{\Delta x}(T,\cdot)-\rho^{\Delta x}(T-t,\cdot)}_{1}\leq\mathcal{K}\,t
\end{equation}
for any $T>0$ and $t\in[0,T]$, with $\mathcal{K}$ given by \eqref{k LF}.
\end{lemma}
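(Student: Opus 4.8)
The plan is to estimate the $\L1$ difference of the numerical solution between two time levels by exploiting the conservative form of the scheme. The key observation is that it suffices to bound a single time step increment $\norma{\rho^{n+1}-\rho^n}_1 = \Delta x \sum_j \modulo{\rho^{n+1}_j - \rho^n_j}$, since the general estimate \eqref{L1 LF} follows by telescoping: writing $t = m\Delta t$ for an appropriate integer $m$, the triangle inequality gives $\norma{\rho^{\Delta x}(T,\cdot)-\rho^{\Delta x}(T-t,\cdot)}_1 \leq \sum \norma{\rho^{n+1}-\rho^n}_1$, and if each single-step increment is bounded by $\mathcal{K}\,\Delta t$ (uniformly in $n$), then the sum over the $m$ steps yields $\mathcal{K}\,m\Delta t = \mathcal{K}\,t$.

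First I would use the scheme \eqref{schema} directly to write, for each cell,
\[
\rho^{n+1}_j-\rho^n_j = \frac{\lambda\alpha}{2}\left(\rho^n_{j+1}-2\rho^n_j+\rho^n_{j-1}\right)-\frac{\lambda}{2}\left(F(\rho^n_{j+1})V^{n-h}_{j+1}-F(\rho^n_{j-1})V^{n-h}_{j-1}\right).
\]
The second step is to reorganize the right-hand side so that it is expressed purely in terms of the spatial differences $\Delta^n_{j\pm\frac12}$. The viscous term is already a discrete Laplacian, hence a difference of the $\Delta^n_{j+\frac12}$. For the flux term I would apply the mean value theorem, splitting $F(\rho^n_{j+1})V^{n-h}_{j+1}-F(\rho^n_{j-1})V^{n-h}_{j-1}$ into a contribution involving $F'(\tilde\rho)\Delta^n_{j\pm\frac12}$ (the density variation) and a contribution involving differences $V^{n-h}_{j+1}-V^{n-h}_{j-1}$ of the nonlocal velocity (the convolution variation). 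The velocity differences are controlled exactly as in the bound \eqref{stimamodulo}, namely $\modulo{V^{n-h}_{j+1}-V^{n-h}_{j-1}}\leq 4\norma{v'}\norma{\omega}R\,\Delta x$, using Lemma~\ref{boundteo} and the monotonicity of $\omega$.

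Summing $\modulo{\rho^{n+1}_j-\rho^n_j}$ over $j$, taking absolute values inside, and using $F'$ bounded (since $\modulo{F'(\rho)}\leq 1+R\norma{f'}$) together with $\V{j}\leq V$ and the just-quoted velocity-difference bound, every term becomes proportional to $\lambda\,\Delta x$ times either $\tv^n$ or $\norma{F}_\infty$ summed against the velocity increment. Invoking the spatial $\BV$ bound from Proposition~\ref{spaceBVteo}, which guarantees $\tv^n \leq C\,\tv(\rho^0)$ uniformly in $\Delta x$ and $n$ on $[0,T]$, I obtain $\norma{\rho^{n+1}-\rho^n}_1 \leq \mathcal{K}\,\Delta t$ with an explicit $\mathcal{K}$ depending on $V$, $\norma{v'}$, $\norma{\omega}$, $R$, $\norma{f'}$ and the uniform total variation bound; this is the constant referenced as \eqref{k LF}. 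The main obstacle I expect is bookkeeping: carefully separating the flux increment into the density-variation part (bounded by the spatial total variation) and the nonlocal-velocity-variation part (bounded by $\Delta x$ times total variation, via \eqref{stimamodulo}), and checking that the factor $\Delta x$ combines with $\sum_j$ to give a finite, $\Delta x$-independent constant rather than a spurious blow-up. Once the single-step estimate is in place with a constant uniform in $n$ (which is where the global $\BV$ bound is essential, so that $\mathcal{K}$ does not depend on $T$ through a growing total variation), the telescoping argument closes the proof.
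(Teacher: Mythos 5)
Your overall architecture (telescoping over single time steps, a mean value theorem splitting of the flux increment into a density-variation part and a nonlocal-velocity-variation part, and the spatial $\BV$ bound of Proposition~\ref{spaceBVteo} for the first part) matches the paper's proof. However, there is a genuine gap in your treatment of the nonlocal-velocity part. You propose to control it by the uniform pointwise bound \eqref{stimamodulo}, namely $\modulo{V^{n-h}_{j+1}-V^{n-h}_{j-1}}\leq 4\norma{v'}\norma{\omega}R\,\Delta x$, together with $F(\rho^n_{j-1})\leq\norma{F}_\infty$. This cannot work: the resulting contribution to the $\L1$ difference is
\begin{equation*}
\Delta x\sum_{j\in\Z}\frac{\lambda}{2}\,\norma{F}_\infty\, 4\norma{v'}\norma{\omega}R\,\Delta x,
\end{equation*}
a sum over all $j\in\Z$ of one and the same positive constant, which diverges (and even for compactly supported data it would produce a constant proportional to the measure of the support of the solution, not the constant $\mathcal{K}$ of \eqref{k LF}). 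The factor $\Delta x$ in front of the sum cannot absorb infinitely many identical terms; it only helps when the summand itself decays in $j$.

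This is exactly why the paper does not use \eqref{stimamodulo} at this point, but the intermediate estimate \eqref{stimamodulo_parziale}, which keeps the values $\modulo{\rho^{n-h}_{j+k}}$ explicit inside the bound. After bounding $F(\rho^n_{j-1})\leq R$, summing over $j$ and exchanging the order of summation, the velocity term is controlled by $2\Delta t\, R\norma{v'}\norma{\omega}\,\Delta x\sum_j\modulo{\rho^{n-h}_j}=2\Delta t\, R\norma{v'}\norma{\omega}\norma{\rho^{n-h}}_1$, and the conservation of the discrete $\L1$ norm (Lemma~\ref{L1boundteo}, which your proposal never invokes) turns this into $2\Delta t\, R\norma{v'}\norma{\omega}\norma{\rho^0}_1$ --- precisely the second summand of $\mathcal{K}$ in \eqref{k LF}. (Alternatively, one could keep $F(\rho^n_{j-1})\leq\rho^n_{j-1}$ inside the sum and use the uniform velocity bound; either way the $\L1$ norm of the solution, not only its total variation, must enter.) A minor further remark: your parenthetical claim that $\mathcal{K}$ ``does not depend on $T$'' is inaccurate --- it does, through the constant $C(T,\norma{\omega},\tau)$ of \eqref{c} multiplying $\tv(\rho^0)$; what the argument actually needs is only that the single-step bound is uniform in $n$ for $n\Delta t\leq T$.
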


\begin{proof}
Let $N_T\in\mathbb{N}$ be such that $N_T\Delta t<T\leq(N_T+1)\Delta t$.
We recall from \eqref{schema} that for every $j\in\mathbb{Z}$ and $n=0,\dots,N_T-1$
    \begin{align}
    \rho^{n+1}_j-\rho^n_j=&\ \frac{\lambda\alpha}{2}\left(\rho^n_{j+1}-2\rho^n_j+\rho^n_{j-1}\right)\nonumber\\
    &-\frac{\lambda}{2}\left[F(\rho^n_{j+1})-F(\rho^n_{j-1})\right]V^{n-h}_{j+1}-\frac{\lambda}{2}F(\rho^n_{j-1})\left(V^{n-h}_{j+1}-V^{n-h}_{j-1}\right)\nonumber\\
    =&\ \frac{\lambda\alpha}{2}\left(\rho^n_{j+1}-\rho^n_j\right)-\frac{\lambda\alpha}{2}\left(\rho^n_j-\rho^n_{j-1}\right)\nonumber\\
    &-\frac{\lambda}{2}\left[F'(\tilde{\rho}^n_{j+\frac{1}{2}})\left(\rho^n_{j+1}-\rho^n_j\right)+F'(\tilde{\rho}^n_{j-\frac{1}{2}})\left(\rho^n_j-\rho^n_{j-1}\right)\right]V^{n-h}_{j+1}\nonumber\\
    &-\frac{\lambda}{2}F(\rho^n_{j-1})\left(V^{n-h}_{j+1}-V^{n-h}_{j-1}\right)\nonumber\\
    =&\ \frac{\lambda}{2}\left[\alpha-F'(\tilde{\rho}^n_{j+\frac{1}{2}})V^{n-h}_{j+1}\right]\left(\rho^n_{j+1}-\rho^n_j\right)-\frac{\lambda}{2}\left[\alpha+F'(\tilde{\rho}^n_{j-\frac{1}{2}})V^{n-h}_{j+1}\right]\left(\rho^n_j-\rho^n_{j-1}\right)\nonumber\\
    &+\frac{\lambda}{2}F(\rho^n_{j-1})\left(V^{n-h}_{j-1}-V^{n-h}_{j+1}\right),\label{delta rho no}
    \end{align}
with again $\tilde{\rho}^n_{j+\frac{1}{2}}$  between $\rho^n_j$ and $\rho^n_{j+1}$, and $\tilde{\rho}^n_{j-\frac{1}{2}}$  between $\rho^n_{j-1}$ and $\rho^n_j$.
 Thus, from \eqref{stimamodulo_parziale}, we get
 \begin{align}
 &\modulo{\rho^{n+1}_j-\rho^n_j} \nonumber\\
 \leq&\ \frac{\lambda}{2}
 \left[\alpha+\left(1+R\norma{f'}\right)V\right]
 \left(\modulo{\rho^n_{j+1}-\rho^n_j}+
 \modulo{\rho^n_j-\rho^n_{j-1}}\right)\nonumber\\
 &+\frac{\lambda}{2}R \norma{v'}\Delta x\left[\omega^0\left(\modulo{\rho^{n-h}_{j-1}}+\modulo{\rho^{n-h}_j}\right)+\sum^{+\infty}_{k=1}(\omega^{k-1}-\omega^k)\left(\modulo{\rho_{j+k-1}^{n-h}}+\modulo{\rho_{j+k}^{n-h}}\right)\right].\label{delta rho}
 \end{align}
Now, we fix $t=m\Delta t$, with $m\leq N_T$.
Using the last inequality,  Lemma \ref{L1boundteo} and Proposition \ref{spaceBVteo}, we obtain 
\begin{align}
\sum_j\Delta x\modulo{\rho^{N_T}_j-\rho^{N_T-m}_j}\leq&\sum_{n=N_T-m}^{N_T-1}\sum_j\Delta x\modulo{\rho^{n+1}_j-\rho^n_j}\nonumber\\
\leq& \  \left[\alpha+\left(1+R\norma{f'}\right)V\right]
\sum_{n=N_T-m}^{N_T-1}\sum_j\Delta t\modulo{\rho^n_{j+1}-\rho^n_j}\nonumber\\
&+R \norma{v'}\omega^0\sum_{n=N_T-m}^{N_T-1}\sum_j\Delta t\Delta x\modulo{\rho^{n-h}_j}\nonumber\\
&+R \norma{v'}\sum^N_{k=1}(\omega^{k-1}-\omega^k)\sum_{n=N_T-m}^{N_T-1}\sum_j\Delta t\Delta x\modulo{\rho_{j+k}^{n-h}}\nonumber\\
\leq& \  \left[\alpha+\left(1+R\norma{f'}\right)V\right]
\sum_{n=N_T-m}^{N_T-1}\Delta t\sum_j\modulo{\rho^n_{j+1}-\rho^n_j}\nonumber\\
&+2R \norma{v'} \omega^0\sum_{n=N_T-m}^{N_T-1}\Delta t \sum_j\Delta x\modulo{\rho^{n-h}_j},\nonumber\\
\leq&\ t\left[\alpha+\left(1+R\norma{f'}\right)V\right]
\sup_{s\in[0,T]}\tv(\rho^{\Delta x}(s,\cdot))\nonumber\\
&+2tR \norma{v'}\norma{\omega}\sup_{s\in[0,T]}\norma{\rho^{\Delta x}(s,\cdot)}_1\leq\mathcal{K}t,\label{BVstima}
\end{align}
with
\begin{equation}\label{k LF}
\mathcal{K}:=\left[\alpha+\left(1+R\norma{f'}\right)V\right] C(T,\norma{\omega},\tau)\tv(\rho^0)+2R\norma{\omega}\norma{v'} \norma{\rho^0}_1,
\end{equation}
where the positive constant 
\begin{align}
C(T,\norma{\omega},\tau):=&\sup_{s\in[0,T]}\left( 2e^{\mathcal{M}(s-\lfloor s/\tau \rfloor\tau)} -1 \right)\left( 2e^{\mathcal{M}\tau} -1 \right)^{\lfloor s/\tau \rfloor}\nonumber\\
=&\left( 2e^{\mathcal{M}(T-\lfloor T/\tau \rfloor\tau)} -1 \right)\left( 2e^{\mathcal{M}\tau} -1 \right)^{\lfloor T/\tau \rfloor}\label{c}
\end{align}
is given by \eqref{eq:TVestimate}.
\end{proof}

\begin{remark} 
The $\L 1$ Lipschitz continuity in time holds also for the HW scheme under Assumption \ref{hp} and the CFL condition \eqref{CFL HW}. This follows from the fact that using \eqref{schema2} we get
\begin{align*}
\rho^{n+1}_j-\rho^n_j=&-\lambda\left(\rho^n_jf(\rho^n_{j+1})V^{n-h}_{j+1}-\rho^n_{j-1}f(\rho^n_j)V^{n-h}_j\right)\\
=&-\lambda\left[\rho^n_jf(\rho^n_{j+1})-\rho^n_{j-1}f(\rho^n_j)\right]V^{n-h}_{j+1}-\lambda\rho^n_{j-1}f(\rho^n_j)\left(V^{n-h}_{j+1}-V^{n-h}_j\right)\\
=&-\lambda f(\rho^n_{j+1})V^{n-h}_{j+1}\left(\rho^n_j-\rho^n_{j-1}\right)-\lambda\rho^n_{j-1}f'(\tilde{\rho}^n_{j+\frac{1}{2}})V^{n-h}_{j+1}\left(\rho^n_{j+1}-\rho^n_j\right)\\
&-\lambda\rho^n_{j-1}f(\rho^n_j)\left(V^{n-h}_{j+1}-V^{n-h}_j\right),
\end{align*}
with $\tilde{\rho}^n_{j+\frac{1}{2}}$ defined as before.
Thus, similarly to \eqref{BVstima}, we obtain
$$
\sum_j\Delta x\modulo{\rho^{N_T}_j-\rho^{N_T-m}_j}\leq\mathcal{K}t,
$$
with
\begin{equation}\label{k HW}
\mathcal{K}:=V \left(1+R\norma{f'}\right) C(T,\norma{\omega},\tau)\tv(\rho^0)+2R\norma{\omega}\norma{v'} \norma{\rho^0}_1,
\end{equation}
and $C(T,\norma{\omega},\tau)$ defined as in \eqref{c}.
\end{remark}

We can now provide an estimate for the discrete total variation in space and time. We give the following proof using the LF scheme, but the result can be identically proven for the HW scheme under the CFL condition \eqref{CFL HW} and for $\mathcal{K}$ given by \eqref{k HW}.

\begin{proposition}[$\BV$ estimates in space and time]\label{BVteo}
Let Assumption \ref{hp} and the CFL condition \eqref{CFL LF} hold. Then, for any $\rho^0\in \BV(\R;[0,R])$, the numerical solution $\rho^{\Delta x}$ has bounded total variation on $[0,T]\times\R$, uniformly in $\Delta x$ and $\Delta t$, for any time horizon $T>0$.
\end{proposition}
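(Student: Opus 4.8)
The goal is to bound the total space-time variation
\[
\tv\bigl(\rho^{\Delta x}\bigr)_{[0,T]\times\R}
= \Delta t \sum_{n=0}^{N_T-1}\sum_j\Delta x\,\modulo{\rho^{n+1}_j-\rho^n_j}
+ \Delta t\sum_{n=0}^{N_T}\sum_j\modulo{\rho^n_{j+1}-\rho^n_j},
\]
uniformly in the discretization parameters. The plan is to handle the two contributions separately, since the results needed for each are already in hand. The spatial part is immediate: integrating the spatial $\BV$ estimate \eqref{eq:TVestimate} from Proposition~\ref{spaceBVteo} over $t\in[0,T]$ gives
\[
\Delta t\sum_{n=0}^{N_T}\sum_j\modulo{\rho^n_{j+1}-\rho^n_j}
\leq (T+\Delta t)\sup_{s\in[0,T]}\tv\bigl(\rho^{\Delta x}(s,\cdot)\bigr)
\leq (T+\Delta t)\,C(T,\norma{\omega},\tau)\,\tv(\rho^0),
\]
with $C(T,\norma{\omega},\tau)$ as in \eqref{c}, which is finite and independent of $\Delta x,\Delta t$.

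For the temporal part, the key is the $\L1$ Lipschitz-in-time estimate of Lemma~\ref{L1contteo}. Summing the per-step increment bound already derived inside that proof — specifically the chain of inequalities culminating in \eqref{BVstima}, applied with $m=1$ on each time level — over all $n=0,\ldots,N_T-1$, I would obtain
\[
\Delta t\sum_{n=0}^{N_T-1}\sum_j\Delta x\,\modulo{\rho^{n+1}_j-\rho^n_j}
\leq \mathcal{K}\,(T+\Delta t),
\]
where $\mathcal{K}$ is the constant \eqref{k LF}. Indeed, \eqref{delta rho} bounds each single-step increment $\sum_j\Delta x\modulo{\rho^{n+1}_j-\rho^n_j}$ by $\Delta t$ times a combination of the spatial total variation at time $n$ and the $\L1$ norm at time $n-h$; summing in $n$ and invoking Lemma~\ref{L1boundteo} and Proposition~\ref{spaceBVteo} again yields exactly the factor $\mathcal{K}$.

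Adding the two contributions gives a bound of the form $(T+\Delta t)\bigl(\mathcal{K}+C\,\tv(\rho^0)\bigr)$, which is uniform in $\Delta x$ and $\Delta t$ for every fixed $T$. The HW case follows by the identical argument, replacing \eqref{CFL LF} with \eqref{CFL HW}, using the spatial bound of Proposition~\ref{spaceBVteo2} and the constant \eqref{k HW}. There is really no serious obstacle here: both pieces are corollaries of estimates proved in the preceding results, and the only care needed is the bookkeeping of the extra $\Delta t$ coming from the fact that $N_T\Delta t$ may slightly exceed $T$. The main point worth stating explicitly is that uniformity hinges on the spatial $\BV$ bound \eqref{eq:TVestimate} being finite uniformly in time on $[0,T]$, which is precisely what Proposition~\ref{spaceBVteo} (resp.\ \ref{spaceBVteo2}) provides.
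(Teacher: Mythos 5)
Your proposal is correct and follows essentially the same route as the paper's proof: split the space-time variation into the spatial-jump part, bounded by $T\sup_{t\in[0,T]}\tv(\rho^{\Delta x}(t,\cdot))$ via Proposition~\ref{spaceBVteo}, and the temporal-jump part, bounded by $\mathcal{K}T$ by summing the one-step estimate underlying \eqref{BVstima} (i.e.\ \eqref{delta rho} with $m=1$) over $n$ and invoking Lemma~\ref{L1boundteo} and Proposition~\ref{spaceBVteo}. The only slip is cosmetic: in the space-time total variation the temporal jumps are weighted by $\Delta x$ alone (each jump occurs across an edge of length $\Delta x$), so the extra factor $\Delta t$ in the first sum of your opening display should not be there; with that prefactor removed, your estimates coincide with the paper's.
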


\begin{proof}
    If $T\leq\Delta t$, then TV$(\rho^{\Delta x})\leq T\cdot$TV$(\rho^0)$. For $T$ such that $N_T\Delta t<T\leq(N_T+1)\Delta t$ with $N_T\in\mathbb{N}$, we have
    \begin{eqnarray*}
        \tv(\rho^{\Delta x})&=&\sum_{n=0}^{N_T-1}\sum_{j\in\mathbb{Z}}\Delta t\modulo{\rho^n_{j+1}-\rho^n_j}+(T-N_T\Delta t)\sum_{j\in\mathbb{Z}}\modulo{\rho^{N_T}_{j+1}-\rho_j^{N_T}}\\
        &&+\sum_{n=0}^{N_T-1}\sum_{j\in\mathbb{Z}}\Delta x\modulo{\rho^{n+1}_j-\rho^n_j}.
    \end{eqnarray*}
    The first two terms together are bounded by $T\sup_{t\in[0,T]}\tv(\rho^{\Delta x}(t,\cdot))$. The result follows from Lemma \ref{L1contteo} since, as in \eqref{BVstima}, one can bound the last term as
$$
\sum_{n=0}^{N_T-1}\sum_j\Delta x\modulo{\rho^{n+1}_j-\rho^n_j}\leq\mathcal{K}T.
$$
\end{proof}

\section{Well-posedness of entropy weak solutions}
\label{sec:convergence}

In order to prove the existence of an entropy solution, following \cite{ChiarelloGoatin2018} we derive a discrete entropy inequality for the approximate solution generated by the LF scheme \eqref{schema}, which is used to prove that the limit of the LF approximations is indeed an entropy solution in the sense of Definition \ref{entropy}. Let us denote
\begin{eqnarray*}
    G_{j+\frac{1}{2}}(u,w)&=&\frac{1}{2}F(u)V^{n-h}_j+\frac{1}{2}F(w)V^{n-h}_{j+1}-\frac{\alpha}{2}(w-u),\\
    F^\kappa_{j+\frac{1}{2}}(u,w)&=&G_{j+\frac{1}{2}}(u\wedge\kappa,w\wedge\kappa)-G_{j+\frac{1}{2}}(u\vee\kappa,w\vee\kappa), \\
    &=& \frac{1}{2} \sgn(u-\kappa) \left(F(u)-F(\kappa)\right) V_j^{n-h} \\
    && + \frac{1}{2} \sgn(w-\kappa) \left(F(w)-F(\kappa)\right) V_{j+1}^{n-h} \\
    && +\frac{\alpha}{2}\left(\modulo{w-\kappa}-\modulo{u-\kappa}\right)
\end{eqnarray*}
with $a\wedge b=\max\{a,b\}$ and $a\vee b=\min\{a,b\}$.

\begin{proposition}[Discrete entropy inequality]
Given Assumption \ref{hp}, let $\rho^n_j$, $j\in\mathbb{Z}$, $n\in\{-h,\dots,0\}\cup\mathbb{N}$, be given by \eqref{schema}. Then, if the CFL condition \eqref{CFL LF} is satisfied, we have
\begin{align}
    \modulo{\rho^{n+1}_j-\kappa}-\modulo{\rho^n_j-\kappa}&+\lambda\left(F^\kappa_{j+\frac{1}{2}}(\rho^n_j,\rho^n_{j+1})-F^\kappa_{j-\frac{1}{2}}(\rho^n_{j-1},\rho^n_j)\right)\nonumber\\
    &+\frac{\lambda}{2}\sgn(\rho^{n+1}_j-\kappa)F(\kappa)\left(V^{n-h}_{j+1}-V^{n-h}_{j-1}\right)\leq 0,\label{disentropy}
\end{align}
for all $j\in\mathbb{Z}$, $n\in\mathbb{N}_0$, and $\kappa\in\R$.
\end{proposition}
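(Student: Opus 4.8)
The plan is to prove the discrete entropy inequality by exploiting the monotonicity of the numerical scheme established through the $\L\infty$-bound in Lemma~\ref{boundteo}. The key idea, standard in the theory of monotone finite volume schemes (and used in~\cite{ChiarelloGoatin2018}), is to compare the numerical update applied to $\rho^n_j\vee\kappa$ and $\rho^n_j\wedge\kappa$ against the update applied to $\rho^n_j$ itself. First I would define the scheme operator $H$ as in the proof of Lemma~\ref{boundteo}, but now viewed as a monotone function of the three ``current-time'' arguments $\rho^n_{j-1},\rho^n_j,\rho^n_{j+1}$ (with the delayed values entering only through the frozen coefficients $V^{n-h}_{j\pm1}$). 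From \eqref{derivative j-1}--\eqref{derivative j+1}, the CFL condition \eqref{CFL LF} guarantees that $H$ is nondecreasing in each of these three slots, which is the structural property driving the whole argument.

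The main computation is to apply $H$ to the clipped values. Since $H$ is monotone and the constant $\kappa$ is \emph{almost} a fixed point of the scheme---it would be exactly fixed if the delayed velocities were spatially constant---I would write, using $a\wedge b=\max\{a,b\}$ and $a\vee b=\min\{a,b\}$,
\begin{align*}
H(\rho^n_{j-1}\vee\kappa,\rho^n_j\vee\kappa,\rho^n_{j+1}\vee\kappa) &\geq H(\rho^n_{j-1},\rho^n_j,\rho^n_{j+1})\vee H(\kappa,\kappa,\kappa),\\
H(\rho^n_{j-1}\wedge\kappa,\rho^n_j\wedge\kappa,\rho^n_{j+1}\wedge\kappa) &\leq H(\rho^n_{j-1},\rho^n_j,\rho^n_{j+1})\wedge H(\kappa,\kappa,\kappa),
\end{align*}
and then subtract these two estimates. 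The quantity $H(\kappa,\kappa,\kappa)$ is not simply $\kappa$: plugging the constant into \eqref{schema} leaves the residual term $-\tfrac{\lambda}{2}F(\kappa)\bigl(V^{n-h}_{j+1}-V^{n-h}_{j-1}\bigr)$ coming from the nonconstant coefficients, which is exactly the source term that must appear in \eqref{disentropy}. Using $\modulo{\rho^{n+1}_j-\kappa}=H(\rho^n_{j-1}\vee\kappa,\ldots)\vee H(\kappa,\kappa,\kappa)-\bigl(H(\rho^n_{j-1}\wedge\kappa,\ldots)\wedge H(\kappa,\kappa,\kappa)\bigr)$ rewritten via the clipped updates, together with $\modulo{a-b}=a\vee b-a\wedge b$, I would rearrange the difference of the two inequalities into the telescoping numerical entropy fluxes $F^\kappa_{j\pm\frac{1}{2}}$ and the residual source term.

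The delicate point will be bookkeeping the source term so that it carries the factor $\sgn(\rho^{n+1}_j-\kappa)$ rather than $\sgn(\rho^n_j-\kappa)$, since in \eqref{disentropy} the sign is evaluated at the \emph{new} time level. This is the main obstacle: one must track how the residual $-\tfrac{\lambda}{2}F(\kappa)(V^{n-h}_{j+1}-V^{n-h}_{j-1})$ is distributed between the $\vee$ and $\wedge$ branches and verify that it combines with the correct sign after taking the difference. I would handle this by writing out the update on the clipped values explicitly---mirroring the two choices $\mathcal{R}^n_{\max}$ versus $\mathcal{R}^n_j$ from Lemma~\ref{boundteo}---and identifying, case by case according to the sign of $\rho^{n+1}_j-\kappa$, that the leftover contribution is precisely $\tfrac{\lambda}{2}\sgn(\rho^{n+1}_j-\kappa)F(\kappa)(V^{n-h}_{j+1}-V^{n-h}_{j-1})$. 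The definition of $F^\kappa_{j+\frac12}$ in terms of $G_{j+\frac12}$ evaluated at clipped arguments is designed exactly to make the flux terms telescope, so once the source term is correctly placed the inequality follows directly from the monotonicity estimates above.
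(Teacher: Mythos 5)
Your proposal is correct and follows essentially the same route as the paper's proof: the Crandall--Majda monotonicity argument for the operator $\tilde H_j(u,w,z)=w-\lambda\big(G_{j+\frac12}(w,z)-G_{j-\frac12}(u,w)\big)$ applied to clipped values, with the source term in \eqref{disentropy} arising exactly as you say because $\kappa$ is not a fixed point of the scheme (the paper computes $\tilde H_j(\kappa,\kappa,\kappa)=\kappa-\tfrac{\lambda}{2}F(\kappa)\big(V^{n-h}_{j+1}-V^{n-h}_{j-1}\big)$), and the sign at the new time level handled by the one-line inequality $\modulo{X}\geq\sgn(\rho^{n+1}_j-\kappa)\,X$ applied to $X=\rho^{n+1}_j-\kappa+\tfrac{\lambda}{2}F(\kappa)\big(V^{n-h}_{j+1}-V^{n-h}_{j-1}\big)$, which is the compact form of your case-by-case check. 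One purely notational caution: you cite the paper's (reversed) convention $a\wedge b=\max\{a,b\}$, $a\vee b=\min\{a,b\}$, but your two monotonicity inequalities are written as if $\vee$ were the max and $\wedge$ the min, so with the stated convention their directions are swapped; this does not affect the substance of the argument.
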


\begin{proof}
The proof follows closely \cite[Proposition 3.4]{ChiarelloGoatin2018}. We detail it below for sake of completeness.
    We set
    $$
    \tilde{H}_j(u,w,z)=w-\lambda\big(G_{j+\frac{1}{2}}(w,z)-G_{j-\frac{1}{2}}(u,w)\big),
    $$
    that is monotone non-decreasing in all its variables, since the derivatives are
    \begin{align*}
    \frac{\partial\tilde{H}_j}{\partial u}&=\frac{\lambda}{2}\left(\alpha+F'(u)V^{n-h}_{j-1}\right),\\
    \frac{\partial\tilde{H}_j}{\partial w}&=1-\lambda\alpha,\\
    \frac{\partial\tilde{H}_j}{\partial z}&=\frac{\lambda}{2}\left(\alpha-F'(z)V^{n-h}_{j+1}\right),
    \end{align*}
    and they are all non-negative because of the CFL condition.
    We have the identity
    \begin{align*}
        \tilde{H}_j(\rho^n_{j-1}\wedge\kappa,\rho^n_j\wedge\kappa,\rho^n_{j+1}\wedge\kappa)&-\tilde{H}_j(\rho^n_{j-1}\vee\kappa,\rho^n_j\vee\kappa,\rho^n_{j+1}\vee\kappa)\\
        &=\modulo{\rho^n_j-\kappa}-\lambda\big(F^\kappa_{j+\frac{1}{2}}(\rho^n_j,\rho^n_{j+1})-F^\kappa_{j-\frac{1}{2}}(\rho^n_{j-1},\rho^n_j)\big).
    \end{align*}
    By monotonicity and \eqref{schema} we get
    \begin{align*}
        \tilde{H}_j&(\rho^n_{j-1}\wedge\kappa,\rho^n_j\wedge\kappa,\rho^n_{j+1}\wedge\kappa)-\tilde{H}_j(\rho^n_{j-1}\vee\kappa,\rho^n_j\vee\kappa,\rho^n_{j+1}\vee\kappa)\\
        &=\tilde{H}_j(\rho^n_{j-1},\rho^n_j,\rho^n_{j+1})\wedge\tilde{H}_j(\kappa,\kappa,\kappa)-\tilde{H}_j(\rho^n_{j-1},\rho^n_j,\rho^n_{j+1})\vee\tilde{H}_j(\kappa,\kappa,\kappa)\\
        &=\modulo{\tilde{H}_j(\rho^n_{j-1},\rho^n_j,\rho^n_{j+1})-\tilde{H}_j(\kappa,\kappa,\kappa)}\\
        &=\sgn\left(\tilde{H}_j(\rho^n_{j-1},\rho^n_j,\rho^n_{j+1})-\tilde{H}_j(\kappa,\kappa,\kappa)\right)\cdot\left(\tilde{H}_j(\rho^n_{j-1},\rho^n_j,\rho^n_{j+1})-\tilde{H}_j(\kappa,\kappa,\kappa)\right)\\
        &=\sgn\left(\rho^{n+1}_j-\kappa+\frac{\lambda}{2}F(\kappa)\left(V^{n-h}_{j+1}-V^{n-h}_{j-1}\right)\right)\cdot\left(\rho^{n+1}_j-\kappa+\frac{\lambda}{2}F(\kappa)\left(V^{n-h}_{j+1}-V^{n-h}_{j-1}\right)\right)\\
        &\geq\sgn(\rho^{n+1}_j-\kappa)\cdot\left(\rho^{n+1}_j-\kappa+\frac{\lambda}{2}F(\kappa)\left(V^{n-h}_{j+1}-V^{n-h}_{j-1}\right)\right)\\
        &=\modulo{\rho^{n+1}_j-\kappa}+\frac{\lambda}{2}\sgn(\rho^{n+1}_j-\kappa)F(\kappa)\left(V^{n-h}_{j+1}-V^{n-h}_{j-1}\right),
    \end{align*}
    which gives \eqref{disentropy}.
\end{proof}

Given the entropy inequality and the $\BV$ estimates, we are able to propose a proof of existence of solutions, following the steps of \cite[Theorem 1.2]{ChiarelloGoatin2019}. 
The following result holds for every time horizon $T>0$. 

\begin{theorem}[Existence]\label{E1}
 Given Assumption \ref{hp}, for any $\rho^0\in \BV(\R;[0,R])$ and $T>0$ the model \eqref{delay_bis}-\eqref{eq:initial datum} admits an entropy weak solution $\rho$ in the sense of Definition \ref{entropy}, such that
 \begin{align}
   &  \rho(t,x) \in [0,R] &\hbox{for a.e. } x\in\R, t\in [0,T], \label{eq:linfty} \\
   &\norma{\rho(t,\cdot)}_1 = \|\rho^0\|_1 &\hbox{for } t\in [0,T], \label{eq:l1} \\
   & \tv (\rho(t,\cdot)) \leq \left( 2e^{\mathcal{M}(t-\lfloor t/\tau \rfloor \tau)} -1 \right)
    \left( 2e^{\mathcal{M}\tau} -1 \right)^{\lfloor t/\tau \rfloor} \tv(\rho^0)
    &\hbox{for } t\in [0,T], \label{eq:TV} \\
    &\norma{\rho(t_1,\cdot)-\rho(t_2,\cdot)}_1
    \leq \mathcal{K} \modulo{t_1 -t_2} &\hbox{for } t_1,t_2\in\, [0,T], \label{eq:L1time} 
 \end{align}
 for constants $\mathcal{M}$ and $\mathcal{K}$ given in~\eqref{eq:M} and~\eqref{k LF}, respectively.
\end{theorem}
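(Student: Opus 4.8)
The plan is to obtain $\rho$ as the limit of the Lax--Friedrichs approximations $\rho^{\Delta x}$ built from the scheme \eqref{schema}, and to verify that this limit satisfies Definition~\ref{entropy} together with the four listed bounds, each of which descends from the corresponding discrete estimate. First I would fix a mesh satisfying the CFL condition \eqref{CFL LF} and assemble the piecewise constant function $\rho^{\Delta x}$. By Lemma~\ref{boundteo} the family $\{\rho^{\Delta x}\}$ is uniformly bounded in $[0,R]$, and by Proposition~\ref{BVteo} it has total variation on $[0,T]\times\R$ bounded uniformly in $\Delta x,\Delta t$. Helly's compactness theorem then yields a subsequence converging in $\L1_{loc}([0,T]\times\R)$; combining the spatial compactness with the uniform Lipschitz-in-time estimate of Lemma~\ref{L1contteo} upgrades this to convergence in $C([0,T];\L1_{loc}(\R))$ towards some $\rho\in\BV([0,T]\times\R;[0,R])$.

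The four properties pass to the limit by standard semicontinuity and continuity arguments. The bound \eqref{eq:linfty} survives because $\L\infty$ bounds are stable under a.e.\ convergence; \eqref{eq:l1} follows from the conservation in Lemma~\ref{L1boundteo}, the $\L1$ convergence of the discretized initial datum to $\rho^0$, and a tightness argument ruling out mass escaping to infinity (guaranteed by the uniform $\BV$ control together with the finite-speed structure of the flux); \eqref{eq:TV} is obtained by applying the lower semicontinuity of the total variation under $\L1$ convergence to the pointwise-in-time estimate \eqref{eq:TVestimate}; and \eqref{eq:L1time} is inherited directly from the uniform Lipschitz estimate \eqref{L1 LF}.

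The substantial work is to show that $\rho$ is an entropy weak solution. Following the Lax--Wendroff strategy, I would multiply the discrete entropy inequality \eqref{disentropy} by $\Delta x\,\phi(t^n,x_j)$ for a fixed $\phi\in\Cc1([0,T)\times\R;\R^+)$, sum over $j\in\Z$ and $n\in\N_0$, and perform discrete summation by parts. The discrete time differences reproduce the $\modulo{\rho-\kappa}\partial_t\phi$ term and the initial-datum contribution, while the consistency of the numerical entropy flux $F^\kappa_{j+\frac12}$ with $\sgn(\rho-\kappa)\big(F(\rho)-F(\kappa)\big)v\big((\rho\ast\omega)(t-\tau,x)\big)$ converts the flux differences into the second term of Definition~\ref{entropy}.

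The key obstacle, and the place where the delay genuinely enters, is the nonlocal speed. I expect the hard part to be proving that the piecewise constant interpolant of $V^{n-h}_j$ converges to $v\big((\rho\ast\omega)(t-\tau,x)\big)$ and, more delicately, that the rescaled difference $\tfrac{1}{2\Delta x}\big(V^{n-h}_{j+1}-V^{n-h}_{j-1}\big)$ converges to $\partial_x v\big((\rho\ast\omega)(t-\tau,x)\big)$, so that the last term of \eqref{disentropy} produces exactly the source term $-\sgn(\rho-\kappa)\kappa f(\kappa)\,\partial_x v(\ldots)\,\phi$ of Definition~\ref{entropy}. Since $\tau=h\Delta t$ is fixed while $h\to\infty$ as $\Delta t\to0$, the time shift by $h$ indices must be correctly matched with the continuous delay $t-\tau$; this I would handle using the $\L1_{loc}$ convergence of $\rho^{\Delta x}$ in space-time, the regularity and monotonicity of $\omega$ (to control both the discrete convolution and its spatial increments via $\omega\in\C1$), and the Lipschitz regularity of $v$ inherited from $v\in\C2$. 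Once these convergences are established, passing to the limit $\Delta x\to0$ in the summed inequality, with the dominated convergence theorem justifying the interchange of limit and integration, delivers the entropy inequality of Definition~\ref{entropy} and completes the proof.
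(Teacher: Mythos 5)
Your overall strategy coincides with the paper's: uniform $\L\infty$ and $\BV$ bounds (Lemma~\ref{boundteo}, Proposition~\ref{BVteo}), Helly compactness, and a Lax--Wendroff passage to the limit in the discrete entropy inequality \eqref{disentropy}; your treatment of \eqref{eq:linfty}--\eqref{eq:L1time} is also consistent with the paper. There is, however, one genuine gap, and it sits exactly where the paper invests most of its technical effort.

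The source term of \eqref{disentropy} carries $\sgn(\rho^{n+1}_j-\kappa)$: the sign is evaluated at the \emph{updated} time level, while the test function and the velocity differences sit at levels $n$ and $n-h$. Your plan handles this term by establishing convergence of $V^{n-h}_j$ and of $\bigl(V^{n-h}_{j+1}-V^{n-h}_{j-1}\bigr)/(2\Delta x)$ and then invoking dominated convergence. That cannot work as stated: $\sgn(\cdot-\kappa)$ is discontinuous and multiplies the factor $F(\kappa)\,\partial_x v$, which does not vanish on the set $\{\rho=\kappa\}$, so a.e.\ convergence of $\rho^{\Delta x}$ gives no control of the sign at the shifted time level. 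Concretely, after splitting off the level-$n$ sign (the paper's term \eqref{part4}, which does pass to the limit), one is left with the mismatch term \eqref{part5},
\begin{equation*}
-\frac{1}{2}\Delta t\,F(\kappa)\sum_{n=0}^{N_T-1}\sum_j\left[\sgn(\rho^{n+1}_j-\kappa)-\sgn(\rho^{n}_j-\kappa)\right]\left(V^{n-h}_{j+1}-V^{n-h}_{j-1}\right)\phi^n_j\,,
\end{equation*}
whose naive estimate is only $\mathcal{O}(1)$, not $o(1)$: each summand is $\mathcal{O}(\Delta t\,\Delta x)$ (the sign difference can equal $2$, and $\modulo{V^{n-h}_{j+1}-V^{n-h}_{j-1}}=\mathcal{O}(\Delta x)$), and there are $\mathcal{O}\bigl((T/\Delta t)(X/\Delta x)\bigr)$ summands in the support of $\phi$. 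No convergence theorem applies to it directly. The paper's resolution, following Betancourt et al., is an Abel summation \emph{in time}, which moves the discrete time difference off the signs and onto the quantity $\bigl(V^{n-h}_{j+1}-V^{n-h}_{j-1}\bigr)\phi^n_j$; the scheme \eqref{schema}, the uniform $\BV$ bound and the $\L1$-Lipschitz-in-time estimate then show (computation \eqref{stimavelocita} and Appendix~\ref{sec:app}) that these time increments are of size $\mathcal{O}\bigl(\Delta x(\Delta x+\Delta t)\bigr)$, whence $\eqref{part5}=\mathcal{O}(\Delta x+\Delta t)$. Without this (or an equivalent) argument your passage to the limit in the entropy inequality is incomplete; the rest of your outline --- compactness, inheritance of the four bounds, consistency of the numerical entropy flux, and the matching of the discrete delay $h\Delta t=\tau$ with the continuous one --- does correspond to the paper's treatment of \eqref{part1}--\eqref{part4}.
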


\begin{proof}
By Lemma \ref{boundteo} and Remark~\ref{HWremark}, we know that the the approximate solutions $\rho^{\Delta x}$ constructed via the LF or HW schemes are uniformly bounded on $[0,T]\times\R$. Moreover, Proposition~\ref{BVteo} guarantees that the numerical solutions also have uniformly bounded total variation. Thus, from Helly's Theorem we get that there exists a subsequence of the numerical approximations, still denoted by $\rho^{\Delta x}$, that converges in the $\Lloc{1}$-norm to some $\rho\in\BV([0,T]\times\R;[0,R])$ as $\Delta x \searrow 0$.
In the following, we apply the classical procedure of Lax-Wendroff theorem to prove that the limit function $\rho$ is an entropy weak solution of~\eqref{delay_bis}-\eqref{eq:initial datum} in the sense of Definition~\ref{entropy}. 
Let $\phi\in\Cc 1([0,T[\,\times\R;\R^+)$ be a test function, and let us assume that the grid for the approximation is such that $N_T\Delta t<T\leq(N_T+1)\Delta t$. By multiplying \eqref{disentropy} by $\phi^n_j:=\phi(t_n,x_j)$ and summing on $n=0,\dots N_T$ and $j\in\mathbb{Z}$, we get
\begin{align*}
\sum_{n=0}^{N_T-1}\sum_j\phi^n_j\left(\modulo{\rho^{n+1}_j-\kappa}-\modulo{\rho^n_j-\kappa}\right)&+\lambda\sum_{n=0}^{N_T-1}\sum_j\phi^n_j\left(F^\kappa_{j+\frac{1}{2}}(\rho^n_j,\rho^n_{j+1})-F^\kappa_{j-\frac{1}{2}}(\rho^n_{j-1},\rho^n_j)\right)\\
    &+\lambda\sum_{n=0}^{N_T-1}\sum_j\phi^n_j\sgn(\rho^{n+1}_j-\kappa)F(\kappa)\frac{V^{n-h}_{j+1}-V^{n-h}_{j-1}}{2}\leq 0.
\end{align*}
Then, summing by parts, we obtain
\begin{align*}
0\leq&\sum_j\phi^0_j\modulo{\rho^0_j-\kappa}+\sum_{n=1}^{N_T-1}\sum_j(\phi^n_j-\phi^{n-1}_j)\modulo{\rho^n_j-\kappa}\\
& +\lambda\sum_{n=0}^{N_T-1}\sum_j(\phi^n_{j+1}-\phi^n_j)\,F^\kappa_{j+\frac{1}{2}}(\rho^n_j,\rho^n_{j+1})-\lambda\sum_{n=0}^{N_T-1}\sum_j\sgn(\rho^{n+1}_j-\kappa)F(\kappa)\frac{V^{n-h}_{j+1}-V^{n-h}_{j-1}}{2}\,\phi^n_j
\end{align*}
and multiplying by $\Delta x$
\begin{align}
0\leq&\ \Delta x\sum_j\phi^0_j\modulo{\rho^0_j-\kappa}+\Delta x\Delta t\sum_{n=1}^{N_T-1}\sum_j\frac{\phi^n_j-\phi^{n-1}_j}{\Delta t}\modulo{\rho^n_j-\kappa}\label{part1}\\
&+\Delta x\Delta t\sum_{n=0}^{N_T-1}\sum_j\frac{\phi^n_{j+1}-\phi^n_j}{\Delta x}\left[F^\kappa_{j+\frac{1}{2}}(\rho^n_j,\rho^n_{j+1})-\sgn(\rho^n_j-\kappa)\big(F(\rho^n_j)-F(\kappa)\big)V^{n-h}_j\right]\label{part2}\\
&+\Delta x\Delta t\sum_{n=0}^{N_T-1}\sum_j\frac{\phi^n_{j+1}-\phi^n_j}{\Delta x}\sgn(\rho^n_j-\kappa)\big(F(\rho^n_j)-F(\kappa)\big)V^{n-h}_j\label{part3}\\
&-\Delta x\Delta t\sum_{n=0}^{N_T-1}\sum_j\sgn(\rho^n_j-\kappa)F(\kappa)\frac{V^{n-h}_{j+1}-V^{n-h}_{j-1}}{2\Delta x}\,\phi^n_j\label{part4}\\
&-\frac{1}{2}\Delta tF(\kappa)\sum_{n=0}^{N_T-1}\sum_j\left[\sgn(\rho^{n+1}_j-\kappa)-\sgn(\rho^{n}_j-\kappa)\right]\left(V^{n-h}_{j+1}-V^{n-h}_{j-1}\right)\, \phi^n_j.\label{part5}
\end{align}
Clearly, we have
$$
\eqref{part1}\rightarrow\int_\R\modulo{\rho^0(x)-\kappa}\phi(0,x)\d x+\int_0^T\int_\R\modulo{\rho-\kappa} \del_t\phi \d x\d t,
$$
$$
\eqref{part3}\rightarrow\int_0^T\int_\R\sgn(\rho-\kappa)\big(F(\rho)-F(\kappa)\big)v((\rho\ast\omega)(t-\tau,x))\del_x\phi \d x\d t,
$$
and
$$
\eqref{part4}\rightarrow-\int_0^T\int_\R\sgn(\rho-\kappa)F(\kappa)\del_xv((\rho\ast\omega)(t-\tau,x))\phi\d x\d t,
$$
as $\Delta x\rightarrow 0$. 
Next, we need to prove that both \eqref{part2} and \eqref{part5} converge to zero.
Let us first focus on~$\eqref{part2}$.
Since from \eqref{stimamodulo} it follows that $V^{n-h}_{j+1}-V^{n-h}_j=\mathcal{O}(\Delta x)$, and since the mean value theorem guarantees the bound
\begin{align*}
    \frac{1}{2}\modulo{\sgn(\rho^n_{j+1}-\kappa) \left(F(\rho^n_{j+1})-F(\kappa)\right)}\leq&\ \frac{1}{2}\norma{F'}\modulo{\rho^n_{j+1}-\kappa}\leq\frac{1}{2}\left(1+\left(R\wedge\modulo{\kappa}\right)\norma{f'}\right)(R+|\kappa|),
\end{align*}
then we get
\begin{align*}
    F^\kappa_{j+\frac{1}{2}}(\rho^n_j,&\rho^n_{j+1})-\sgn(\rho^n_j-\kappa) \left(F(\rho^n_j)-F(\kappa)\right) V_j^{n-h}=\\
    =&\ \frac{1}{2} \sgn(\rho^n_j-\kappa) \left(F(\rho^n_j)-F(\kappa)\right) V_j^{n-h}+ \frac{1}{2} \sgn(\rho^n_{j+1}-\kappa) \left(F(\rho^n_{j+1})-F(\kappa)\right) V_j^{n-h} \\
    &+\  \frac{1}{2} \sgn(\rho^n_{j+1}-\kappa) \left(F(\rho^n_{j+1})-F(\kappa)\right) \left(V_{j+1}^{n-h}-V_j^{n-h}\right)\\
    &-\ \sgn(\rho^n_j-\kappa) \left(F(\rho^n_j)-F(\kappa)\right) V_j^{n-h}\\
    & +\frac{\alpha}{2}\left(\modulo{\rho^n_{j+1}-\kappa}-\modulo{\rho^n_j-\kappa}\right)\\
    =&\ \frac{1}{2}\sgn(\rho^n_{j+1}-\kappa)\left(F(\rho^n_{j+1})-F(\kappa) \right)V^{n-h}_j-\frac{1}{2}\sgn(\rho^n_j-\kappa)\left(F(\rho^n_j)-F(\kappa)\right)V^{n-h}_j\\
    &+\frac{\alpha}{2}\left(\modulo{\rho^n_{j+1}-\kappa}-\modulo{\rho^n_j-\kappa}\right)+\mathcal{O}(\Delta x),
\end{align*}
and this implies
\begin{align}
\Big|F^\kappa_{j+\frac{1}{2}}(\rho^n_j,\rho^n_{j+1})&-\sgn(\rho^n_j-\kappa) \left(F(\rho^n_j)-F(\kappa)\right) V_j^{n-h}\Big|\nonumber\\
\leq&\ \frac{1}{2} V\modulo{\sgn(\rho^n_{j+1}-\kappa) \left(F(\rho^n_{j+1})-F(\kappa)\right)- \sgn(\rho^n_j-\kappa) \left(F(\rho^n_j)-F(\kappa)\right)} \label{Q} \\
&+\frac{\alpha}{2}\modulo{\rho^n_{j+1}-\rho^n_j}+\mathcal{O}(\Delta x)\nonumber\\
    \leq&\  \frac{1}{2}\left[\alpha+\left(1+\left(R\wedge\modulo{\kappa}\right) \norma{f'}\right)V\right]\modulo{\rho^n_{j+1}-\rho^n_j}+\mathcal{O}(\Delta x),\nonumber
\end{align}
where we used the following remarks:
\begin{itemize}

    \item if $\sgn(\rho^n_{j+1}-\kappa)=\sgn(\rho^n_{j+1}-\kappa)$, then
    \begin{align*}
        \eqref{Q}=&\ \frac{1}{2} V \modulo{F(\rho^n_{j+1})-F(\rho^n_j)}
        \leq\frac{1}{2}V\left(1+\left(R\wedge\modulo{\kappa}\right)\norma{f'}\right)\modulo{\rho^n_{j+1}-\rho^n_j};
    \end{align*}

    \item if $\sgn(\rho^n_{j+1}-\kappa)=-\sgn(\rho^n_j-\kappa)$, then
    \begin{align*}
        \eqref{Q}& \leq\frac{1}{2} V\modulo{F(\rho^n_{j+1})-F(\kappa)+F(\rho^n_j)-F(\kappa)}\\
        &\leq\frac{1}{2}V\norma{F'}
        \left(\modulo{\rho^n_{j+1}-\kappa}+\modulo{\rho^n_j-\kappa}\right)\\
        &\leq \frac{1}{2} V \left(1+\left(R\wedge\modulo{\kappa}\right)\norma{f'}\right)\modulo{\rho^n_{j+1}-\rho^n_j}.
    \end{align*}
\end{itemize}
We set $X>0$ such that $\phi(t,x)=0$ for $|x|>X$ and a couple of indexes $j_0,j_1\in\mathbb{Z}$ such that $\phi^n_j=0$ if $j$ is not in $[j_0,j_1]$.
Thus, we conclude
\begin{align*}
    |\eqref{part2}|\leq&\ \Delta x\Delta t\norma{\partial_x\phi}\sum_{n=0}^{N_T-1}\sum_{j=j_0}^{j_1}\modulo{F^\kappa_{j+\frac{1}{2}}(\rho^n_j,\rho^n_{j+1})-\sgn(\rho^n_j-\kappa) \left(F(\rho^n_j)-F(\kappa)\right) V_j^{n-h}}\\
    \leq&\ \frac{1}{2}\Delta x\Delta t\norma{\partial_x\phi}\left[\alpha+\left(1+(R\wedge\modulo{\kappa})\norma{f'}\right)V\right]\sum_{n=0}^{N_T-1}\sum_{j=j_0}^{j_1}\modulo{\rho^n_{j+1}-\rho^n_j}+\mathcal{O}(\Delta x)\\
    =&\ \mathcal{O}(\Delta x),
\end{align*}
which follows from the bound
\begin{align*}
    \Delta t\sum_{n=0}^{N_T-1}\sum_{j=j_0}^{j_1}\modulo{\rho^n_{j+1}-\rho^n_j}\leq T\sup_{t\in [0,T]} \tv(\rho^{\Delta x}(t,\cdot))\leq TC(T,\norma{\omega},\tau)\tv(\rho^0),
\end{align*}
for $C(T,\norma{\omega},\tau)$ defined as in \eqref{c}.
Finally, we focus on \eqref{part5}, following \cite{Betancourt2011}. Summing by parts again yields
\begin{align}
\eqref{part5}=&\ \frac{1}{2}\Delta tF(\kappa)\sum_{n=1}^{N_T-1}\sum_j\sgn(\rho^n_j-\kappa)\left[\left(V^{n-h}_{j+1}-V^{n-h}_{j-1}\right)\phi^n_j-\left(V^{n-h-1}_{j+1}-V^{n-h-1}_{j-1}\right)\phi^{n-1}_j\right]\nonumber\\
&+\frac{1}{2}\Delta tF(\kappa)\sum_j\sgn(\rho^0_j-\kappa)\left(V^{-h}_{j+1}-V^{-h}_{j-1}\right)\phi^0_j\nonumber\\
=&\ \frac{1}{2}\Delta tF(\kappa)\sum_{n=1}^{N_T-1}\sum_j\sgn(\rho^n_j-\kappa)\left[\left(V^{n-h}_{j+1}-V^{n-h}_{j-1}\right)-\left(V^{n-h-1}_{j+1}-V^{n-h-1}_{j-1}\right)\right]\phi^{n-1}_j\nonumber\\
&+\frac{1}{2}\Delta t\Delta xF(\kappa)\sum_{n=1}^{N_T-1}\sum_j\sgn(\rho^n_j-\kappa)\left(V^{n-h}_{j+1}-V^{n-h}_{j-1}\right)\frac{\phi^n_j-\phi^{n-1}_j}{\Delta x}\nonumber\\
&+\frac{1}{2}\Delta tF(\kappa)\sum_j\sgn(\rho^0_j-\kappa)\left(V^0_{j+1}-V^0_{j-1}\right)\phi^0_j.\label{perapp}
\end{align}
 Using again that $V^n_{j+1}-V^n_j=\mathcal{O}(\Delta x)$ for all $n\geq-h$, then we get
\begin{align}
    \eqref{part5}=&\ \frac{1}{2}\Delta tF(\kappa)\sum_{n=1}^{N_T-1}\sum_j\sgn(\rho^n_j-\kappa)\left[\left(V^{n-h}_{j+1}-V^{n-h}_{j-1}\right)-\left(V^{n-h-1}_{j+1}-V^{n-h-1}_{j-1}\right)\right]\phi^{n-1}_j\nonumber\\
    &+\mathcal{O}(\Delta x+\Delta t).\label{stimapart5}
\end{align}
Moreover, similarly to what we did in the proof of Proposition \ref{spaceBVteo} for the term $(\ast)$, we have
\begin{align}
    \big(V^{n-h}_{j+1}-&V^{n-h}_{j-1}\big)-\left(V^{n-h-1}_{j+1}-V^{n-h-1}_{j-1}\right)\nonumber\\
    =&\ v'(\xi^{n-h}_j)\Delta x\sum_{k=0}^{+\infty}\omega^k\left(\rho^{n-h}_{j+k+1}-\rho^{n-h}_{j+k-1}\right)
-v'(\xi^{n-h-1}_j)\Delta x\sum_{k=0}^{+\infty}\omega^k\left(\rho^{n-h-1}_{j+k+1}-\rho^{n-h-1}_{j+k-1}\right)\nonumber\\
=&\ \Delta x~\left[v'(\xi^{n-h}_j)-v'(\xi^{n-h-1}_j)\right]\sum_{k=0}^{+\infty}\omega^k\left(\rho^{n-h}_{j+k+1}-\rho^{n-h}_{j+k-1}\right)\nonumber\\
&+\Delta x~v'(\xi^{n-h-1}_j)\sum_{k=0}^{+\infty}\omega^k\left[\left(\rho^{n-h}_{j+k+1}-\rho^{n-h}_{j+k-1}\right)-\left(\rho^{n-h-1}_{j+k+1}-\rho^{n-h-1}_{j+k-1}\right)\right]\nonumber\\
=&\ \Delta x~v''(\bar{\xi}_j)\left[\xi^{n-h}_j-\xi^{n-h-1}_j\right]\sum_{k=0}^{+\infty}\omega^k\left(\rho^{n-h}_{j+k+1}-\rho^{n-h}_{j+k-1}\right)\nonumber\\
&+\Delta x~v'(\xi^{n-h-1}_j) \Big[\sum_{k=1}^{N}(\omega^{k-1}-\omega^{k+1})\left(\rho^{n-h}_{j+k}-\rho^{n-h-1}_{j+k}\right)\nonumber\\
&\qquad\qquad\qquad\qquad-\omega^0\left(\rho^{n-h}_{j-1}-\rho^{n-h-1}_{j-1}\right)-\omega^1\left(\rho^{n-h}_j-\rho^{n-h-1}_j\right)\Big]\label{stimavelocita},
\end{align}
being $\xi^{n-h}_j$ between $\Delta x\sum_{k=0}^{+\infty}\omega^k\rho^{n-h}_{j+k+1}$ and $\Delta x\sum_{k=0}^{+\infty}\omega^k\rho^{n-h}_{j+k-1}$, and $\xi^{n-h-1}_j$ between $\Delta x\sum_{k=0}^{+\infty}\omega^k\rho^{n-h-1}_{j+k+1}$ and $\Delta x\sum_{k=0}^{+\infty}\omega^k\rho^{n-h-1}_{j+k-1}$, and for $\bar{\xi}_j$ between $\xi^{n-h}_j$ and $\xi^{n-h-1}_j$.
Regarding the first term, we remark that 
\begin{equation}
    \sum_{k=0}^{+\infty}\omega^k\modulo{\rho^{n-h}_{j+k+1}-\rho^{n-h}_{j+k-1}}
    \leq\norma{\omega}\sup_{t\in[0,T]}\tv(\rho^{\Delta x}(t,\cdot))\leq\norma{\omega}C(T,\norma{\omega},\tau)\tv(\rho^0),\label{stimavelocitaaux}
\end{equation} 
and we underline that $\xi^{n-h}_j-\xi^{n-h-1}_j=\mathcal{O}(\Delta x+\Delta t)$ (details in Appendix \ref{sec:app}). Moreover, using the same notation as above, \eqref{delta rho no} implies
\begin{equation*}
\rho^{n-h}_j-\rho^{n-h-1}_j= \lambda\mathcal{R}^n_j\left(\rho^{n-h-1}_{j+1}-\rho^{n-h-1}_j\right)-\lambda\mathcal{L}^n_j\left(\rho^{n-h-1}_j-\rho^{n-h-1}_{j-1}\right)+\mathcal{O}(\Delta t),
\end{equation*}
for every $j\in\mathbb{Z}$, being $\mathcal{R}^n_j=\frac{1}{2}[\alpha-F'(\tilde{\rho}^{n-h-1}_{j+\frac{1}{2}})V^{n-2h-1}_{j+1}]$ and $\mathcal{L}^n_j=\frac{1}{2}[\alpha-F'(\tilde{\rho}^{n-h-1}_{j-\frac{1}{2}})V^{n-2h-1}_{j+1}]$.
Thus, we have
\begin{align*}
    \big(V^{n-h}_{j+1}&-V^{n-h}_{j-1}\big)-\left(V^{n-h-1}_{j+1}-V^{n-h-1}_{j-1}\right)=\mathcal{O}(\Delta x^2+\Delta x\Delta t)\\
    &+\lambda\Delta x~v'(\xi^{n-h-1}_j)\Big[\sum_{k=1}^N\mathcal{R}^n_{j+k}(\omega^{k-1}-\omega^{k+1})\left(\rho^{n-h-1}_{j+k+1}-\rho^{n-h-1}_{j+k}\right)\\
    &\qquad\qquad\qquad\qquad-\omega^0\mathcal{R}^n_{j-1}\left(\rho^{n-h-1}_j-\rho^{n-h-1}_{j-1}\right)-\omega^1\mathcal{R}^n_j\left(\rho^{n-h-1}_{j+1}-\rho^{n-h-1}_j\right)\Big]\\
    &-\lambda\Delta x~v'(\xi^{n-h-1}_j)\Big[\sum_{k=1}^N\mathcal{L}^n_{j+k}(\omega^{k-1}-\omega^{k+1})\left(\rho^{n-h-1}_{j+k}-\rho^{n-h-1}_{j+k-1}\right)\\
    &\qquad\qquad\qquad\qquad-\omega^0\mathcal{L}^n_{j-1}\left(\rho^{n-h-1}_{j-1}-\rho^{n-h-1}_{j-2}\right)-\omega^1\mathcal{L}^n_j\left(\rho^{n-h-1}_j-\rho^{n-h-1}_{j-1}\right)\Big].
\end{align*}
Since $|\mathcal{R}^n_j|,|\mathcal{L}^n_j|\leq\frac{1}{2}[\alpha+(1+R\norma{f'})V]$ for every $j\in\Z$, and since
\begin{align*}
    \lambda\Delta x\Delta t\sum_{k=1}^N&(\omega^{k-1}-\omega^{k+1})\sum_{n=1}^{N_T-1}\sum_j\modulo{\rho^{n-h-1}_{j+k+1}-\rho^{n-h-1}_{j+k}}\phi^{n-1}_j\\
    \leq&\ 2\lambda\Delta x\Delta t\omega^0\norma{\phi}\sum_{n=1}^{N_T-1}\sum_{j=j_0}^{j_1+N}\modulo{\rho^{n-h-1}_{j+1}-\rho^{n-h-1}_j}\\
    \leq&\ 2\lambda\norma{\omega}\norma{\phi}\int_0^T\int_{-X}^{X+L}\modulo{\rho^{\Delta x}\left(t-(h+1)\Delta t,x+\Delta x\right)-\rho^{\Delta x}\left(t-(h+1)\Delta t,x-\Delta x\right)}\d x\d t\\
    \leq&\ 2\norma{\omega}\norma{\phi}\mathcal{C}\tv(\rho^0)\Delta t,
\end{align*}
where the positive constant $\mathcal{C}$ is given by Proposition \ref{BVteo}, then from \eqref{stimapart5} we get
\begin{align*}
    \eqref{part5}=\mathcal{O}(\Delta x+\Delta t),
\end{align*}
and this clearly proves that \eqref{part5} converges to zero as $\Delta x\rightarrow 0$ ( and $\Delta t\rightarrow 0$).
\end{proof}

By properly adapting the doubling of variables technique due to Kru{\v{z}}kov, now we prove a uniqueness result within the class of entropy solutions for the initial value problem.
The uniqueness follows from the Lipschitz continuous dependence of entropy solutions with respect to initial data, that is asserted in the following Theorem. 

\begin{theorem}[$\L1$ stability]\label{stabilityteo}
    Given Assumption \ref{hp}, let $\rho$ and $\sigma$ be two entropy weak solutions of \eqref{delay_bis}-\eqref{eq:initial datum} as in Definition \ref{entropy}, with initial data $\rho^0, \sigma^0 \in \BV(\R;[0,R])$ and time delay parameters $\tau_1, \tau_2 >0$, respectively. Then, for any $T>0$ there holds
    \begin{equation}\label{stimaL1}
        \norma{\rho(t,\cdot)-\sigma(t,\cdot)}_{1}\leq e^{K_1T}
        \left(K_3\norma{\rho^0-\sigma^0}_{1}
        + K_2\modulo{\tau_1 - \tau_2}
        \right),
        \qquad\forall t\in[0,T],
    \end{equation}
    with $K_1>0$ given by \eqref{K1}, $K_2>0$ by \eqref{K2} and $K_3>0$ by \eqref{K3}.
\end{theorem}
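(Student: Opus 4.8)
The plan is to adapt Kru\v{z}kov's doubling of variables technique to the non-local, delayed setting, following the strategy used for non-local conservation laws in \cite{ChiarelloGoatin2018, ChiarelloGoatin2019}, while carefully tracking the dependence on the two delay parameters. Set $F(\rho)=\rho f(\rho)$ and write the non-local velocities as $V_\rho(t,x):=v((\rho\ast\omega)(t-\tau_1,x))$ and $V_\sigma(t,x):=v((\sigma\ast\omega)(t-\tau_2,x))$. First I would write the entropy inequality for $\rho$ with Kru\v{z}kov constant $\kappa=\sigma(s,y)$ and that for $\sigma$ with $\kappa=\rho(t,x)$, both tested against a function $\phi(t,x,s,y)=\Psi\!\left(\tfrac{t+s}{2},\tfrac{x+y}{2}\right)\delta_\epsilon(t-s)\delta_\epsilon(x-y)$ concentrating on the diagonal. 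Adding the two inequalities and letting the mollification parameter $\epsilon\to 0$ produces, after the standard Kru\v{z}kov cancellations, an inequality for $W(t):=\norma{\rho(t,\cdot)-\sigma(t,\cdot)}_1$ in which the only surviving non-trivial contributions come from the mismatch between $V_\rho$ and $V_\sigma$.

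Concretely, because $V_\rho\neq V_\sigma$, neither the flux nor the source cancels exactly: the sum of the two entropy dissipation inequalities leaves terms of the form $\sgn(\rho-\sigma)(F(\rho)-F(\sigma))(V_\rho - V_\sigma)$, together with the source contribution $\sgn(\rho-\sigma)\big[F(\rho)(\del_x V_\sigma-\del_x V_\rho)+(F(\rho)-F(\sigma))\del_x V_\rho\big]$. The terms proportional to $F(\rho)-F(\sigma)$ are controlled by $\norma{F'}\modulo{\rho-\sigma}$ and by the $\L\infty$ bound $\modulo{\del_x V_\rho}\leq C$, which follows from \eqref{eq:TV} and Assumption~\ref{hp}, hence feed into the $K_1 W$ term of the final Gr\"onwall inequality. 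The genuinely non-local contributions are the velocity difference $V_\rho-V_\sigma$ and the spatial-derivative difference $\del_x V_\rho-\del_x V_\sigma$.

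Both are estimated by inserting the intermediate quantity $v((\sigma\ast\omega)(t-\tau_1,x))$ and splitting into a part driven by the solution difference and a part driven by the delay difference. For the velocity itself, $\modulo{V_\rho-V_\sigma}\leq\norma{v'}\norma{\omega}\big(W(t-\tau_1)+\mathcal{K}\modulo{\tau_1-\tau_2}\big)$, where the first summand uses the convolution estimate and the second uses the $\L1$ Lipschitz-in-time continuity \eqref{eq:L1time} applied to $\sigma$. For the derivative difference one expands $\del_x V_\rho-\del_x V_\sigma=[v'(A_\rho)-v'(A_\sigma)]\del_x A_\rho+v'(A_\sigma)(\del_x A_\rho-\del_x A_\sigma)$, with $A_\rho=(\rho\ast\omega)(t-\tau_1,\cdot)$ and $A_\sigma=(\sigma\ast\omega)(t-\tau_2,\cdot)$; the first piece uses $v\in\C2$ together with the uniform bound on $\del_x A_\rho$ coming from \eqref{eq:TV}, and the second is handled after integration by parts against $\omega,\omega'$, exactly as the term $(\ast)$ was treated in the proof of Proposition~\ref{spaceBVteo}. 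In both cases the same decomposition yields an $\L1$ bound of the form $C\big(W(t-\tau_1)+\modulo{\tau_1-\tau_2}\big)$.

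Collecting these estimates gives a delayed Gr\"onwall inequality
$$W(t)\leq \norma{\rho^0-\sigma^0}_1+C_1\int_0^t W(s)\,\d s+C_2\int_0^t W(s-\tau_1)\,\d s+C_3\modulo{\tau_1-\tau_2}\,t,$$
where for $s$ below the relevant delay the retarded term $W(s-\tau_1)$ reduces to $\norma{\rho^0-\sigma^0}_1$ by the constant extension \eqref{eq:initial datum}. I would then close the argument by the same interval-by-interval iteration used in Proposition~\ref{spaceBVteo}: on the first delay-length slab the retarded contribution is constant and Gr\"onwall yields the claim there; iterating over successive slabs reproduces the exponential factor $e^{K_1 T}$ and absorbs the accumulated retarded contributions into the constants $K_2$ and $K_3$. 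The main obstacle I anticipate is the control of $\del_x V_\rho-\del_x V_\sigma$: it is there that one must simultaneously exploit the $\C2$ regularity of $v$, the uniform $\BV$ bound \eqref{eq:TV}, and the time-Lipschitz estimate \eqref{eq:L1time} to split the error cleanly into a solution-difference part and a delay-difference part, so that the retarded terms can be absorbed and the Gr\"onwall loop actually closes on every finite horizon.
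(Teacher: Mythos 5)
Your proposal is correct and follows essentially the same route as the paper: Kru{\v{z}}kov doubling of variables yielding an inequality driven by the mismatch of the non-local velocities and their space derivatives, a triangle-inequality splitting of that mismatch into a solution-difference part (convolution estimates) and a delay-difference part (the time-Lipschitz bound \eqref{eq:L1time}), and a Gronwall argument in which the retarded term is absorbed using the constant extension \eqref{eq:initial datum}. The only cosmetic differences are that you apply \eqref{eq:L1time} to $\sigma$ (inserting the intermediate quantity $v((\sigma\ast\omega)(t-\tau_1,x))$) where the paper applies it to $\rho$ (inserting $\rho(t-\tau_2,\cdot)$), and that the paper closes with a single standard Gronwall step, absorbing the retarded integral via $\int_0^T\norma{\rho(t-\tau_2,\cdot)-\sigma(t-\tau_2,\cdot)}_1\d t\leq \tau_2\norma{\rho^0-\sigma^0}_1+\int_0^T\norma{\rho(t,\cdot)-\sigma(t,\cdot)}_1\d t$, rather than by a slab-by-slab iteration.
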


\begin{proof}
    In this proof, we follow the steps of  \cite[Lemma 4 and Proof of Theorem 1]{ChiarelloGoatinRossi2019}.
    The functions $\rho$ and $\sigma$ are respectively entropy solutions of the equations
    \begin{eqnarray*}
        \partial_t\rho(t,x)+\partial_x\big(\rho(t,x)f(\rho(t,x))\mathcal{V}(t-\tau_1,x)\big)=0,&&\mathcal{V}(t,x):=v((\rho\ast\omega)(t,x)),\\
        \partial_t\sigma(t,x)+\partial_x\big(\sigma(t,x)f(\sigma(t,x))\mathcal{U}(t-\tau_2,x)\big)=0,&&\mathcal{U}(t,x):=v((\sigma\ast\omega)(t,x)),
    \end{eqnarray*}
    and they fulfill the following initial conditions
    \begin{align*}
        \rho(t,x)&=\rho^0(x),\qquad\mbox{ for }(t,x)\in[-\tau_1,0]\times\R,\\
        \sigma(t,x)&=\sigma^0(x),\qquad\mbox{ for }(t,x)\in[-\tau_2,0]\times\R.
    \end{align*}
    Since $\rho,\sigma\in\L1([0,T]\times\R,[0,R])$, $\mathcal{V}$ and $\mathcal{U}$ are bounded measurable functions and are Lipschitz continuous w.r.t. $x$. Thus, we have
    $$
    \norma{\mathcal{V}_x}\leq 2\norma{\omega}\norma{v'}\norma{\rho}\qquad,\qquad\norma{\mathcal{U}_x}\leq 2\norma{\omega}\norma{v'}\norma{\sigma},
    $$
     where we recall that we are using the compact notation $\norma{\cdot}$ for $\norma{\cdot}_{\infty}$.
    Using Lemma \ref{boundteo} and Kru{\v{z}}kov's doubling of variables technique, we obtain the following inequality:
    \begin{align}
        \norma{\rho(T,\cdot)-\sigma(T,\cdot)}_1\leq&\norma{\rho^0-\sigma^0}_{1}\label{normaL1}\\
        &+\left(1+R\norma{f'}\right)\int_0^T\int_\R\modulo{\del_x\rho(t,x)} \modulo{\mathcal{V}(t-\tau_1,x)-\mathcal{U}(t-\tau_2,x)}\d x\d t\nonumber\\
        &+R\int_0^T\int_\R\modulo{f(\rho(t,x))}\modulo{\del_x\mathcal{V}(t-\tau_1,x)-\del_x\mathcal{U}(t-\tau_2,x)}\d x\d t.\nonumber
    \end{align}
    First, we can bound
    \begin{equation}\label{normaL1_1}
    \modulo{\mathcal{V}(t-\tau_1,x)-\mathcal{U}(t-\tau_2,x)}\leq\norma{\omega}\norma{v'}\norma{\rho(t-\tau_1,\cdot)-\sigma(t-\tau_2,\cdot)}_{1}.
    \end{equation}
    Moreover, Lemma \ref{L1boundteo} ensures
    \begin{align}
        \int_\R&\modulo{\del_x\mathcal{V}(t-\tau_1,x)-\del_x\mathcal{U}(t-\tau_2,x)}\d x=\nonumber\\
        =&\int_\R\modulo{v'((\rho\ast\omega)(t-\tau_1,x))(\rho\ast\partial_x\omega)(t-\tau_1,x)-v'((\sigma\ast\omega)(t-\tau_2,x))(\sigma\ast\partial_x\omega)(t-\tau_2,x)}\d x\nonumber\\
        \leq&\ \norma{v'}\int_\R\modulo{(\rho\ast\partial_x\omega)(t-\tau_1,x)-(\sigma\ast\partial_x\omega)(t-\tau_2,x)}\d x\nonumber\\
        &+\norma{v''}\int_\R\modulo{(\rho\ast\omega)(t-\tau_1,x)-(\sigma\ast\omega)(t-\tau_2,x)} \modulo{(\sigma\ast\partial_x\omega)(t-\tau_2,x)}\d x\nonumber\\
        \leq&\  \norma{v'}\norma{\partial_x\omega}_{1}\norma{\rho(t-\tau_1,\cdot)-\sigma(t-\tau_2,\cdot)}_{1}\nonumber\\
        &+\norma{v''} \norma{\sigma(t-\tau_2,\cdot)}_{1}\norma{\partial_x\omega} \norma{\omega}_{1}
        \norma{\rho(t-\tau_1,\cdot)-\sigma(t-\tau_2,\cdot)}_{1}\nonumber\\
        =& \left(\norma{v'}\norma{\partial_x\omega}_1+\norma{v''} \norma{\sigma^0}_{1} \norma{\partial_x\omega}\norma{\omega}_1\right)\norma{\rho(t-\tau_1,\cdot)-\sigma(t-\tau_2,\cdot)}_{1}.\label{normaL1_2}
    \end{align}
We suppose without loss of generality that $\tau_1\geq\tau_2$.
Thus, plugging \eqref{normaL1_1} and \eqref{normaL1_2} into \eqref{normaL1}, we get
    \begin{align}
        \norma{\rho(T,\cdot)-\sigma(T,\cdot)}_{1}\leq&\ \norma{\rho^0-\sigma^0}_{1}
        +K_1\int_0^T\norma{\rho(t-\tau_1,\cdot)-\sigma(t-\tau_2,\cdot)}_{1}\d t\nonumber\\
        \leq&\ \norma{\rho^0-\sigma^0}_{1}\nonumber \\
        &+K_1\int_0^T\norma{\rho(t-\tau_2,\cdot)-\sigma(t-\tau_2,\cdot)}_{1}\d t\label{tau2tau2}\\
        &+K_1\int_0^T\norma{\rho(t-\tau_1,\cdot)-\rho(t-\tau_2,\cdot)}_{1}\d t\label{tau1tau2},
        \end{align}
    being
    \begin{align}
        K_1=&\ \norma{\omega}\norma{v'}\left(1+R\norma{f'}\right)\sup_{t\in[0,T]}\norma{\rho(t,\cdot)}_{\BV(\R)}
        +R
        \left(\norma{v'}\norma{\partial_x\omega}_1+\norma{v''} \norma{\sigma^0}_{1} \norma{\partial_x\omega}\norma{\omega}_1\right)\label{K1}.
    \end{align}
    We remark that from Remark \ref{relazioneoss} it follows that $K_1$ is dimensionally the inverse of time.
    On the one hand,
    \begin{align*}
     \eqref{tau2tau2}=K_1\int_{-\tau_2}^{T-\tau_2}\norma{\rho(t,\cdot)-\sigma(t,\cdot)}_{1}\d t\leq K_1\tau_2\norma{\rho^0-\sigma^0}_{1}+K_1\int_{0}^{T}\norma{\rho(t,\cdot)-\sigma(t,\cdot)}_{1}\d t,
    \end{align*}
where we used the initial conditions and the fact that $\rho$ and $\sigma$ are weak solutions, and so they are defined for every time horizon $[0,T]$ because of Theorem \ref{E1}.
On the other hand, using the $\L1$ Lipschitz continuity in time stated in~\eqref{eq:L1time},
we get
\begin{align*}
    \eqref{tau1tau2} \leq K_2 \modulo{\tau_1-\tau_2},
\end{align*}
with
\begin{equation}\label{K2}
K_2 =     K_1 \mathcal{K} T,
\end{equation}
that has the same dimension of $\mathcal{K}$ that is the inverse of time, as we can understand from \eqref{k LF}.
Thus,
$$
\norma{\rho(T,\cdot)-\sigma(T,\cdot)}_{1}\leq K_3\norma{\rho^0-\sigma^0}_{1}+K_2 \modulo{\tau_1-\tau_2}+K_1\int_0^T\norma{\rho(t,\cdot)-\sigma(t,\cdot)}_{1}\d t,
$$
where we have introduced the dimensionless constant
\begin{equation}\label{K3}
    K_3=1+K_1\min\{\tau_1,\tau_2\}.
\end{equation}
The statement follows from Gronwall's lemma.
\end{proof}

Theorem \ref{stabilityteo} also proves a stability property w.r.t. the delay $\tau$, that we state as a corollary.
A similar convergence result was proved in \cite[Theorem 4.3]{KeimerPflug2019} for the delayed model \eqref{delay}. We remark that our result is stronger since it holds for every time horizon $T>0$.

\begin{cor}[Convergence for delay tending to zero]\label{stabilitycor}
    Let Assumption \ref{hp} hold. Given $\rho^0\in\BV(\R;[0,R])$, let $\rho_\tau\in\L1([0,T]\times\R;\R)$ denote the solution of the Cauchy problem \eqref{delay_bis}-\eqref{eq:initial datum} for $\tau>0$. Let also $\rho\in\L1([0,T]\times\R;\R)$ be the entropy solution of
    \begin{equation}\label{nodelay_bis}
    \partial_t \rho(t,x)+\partial_x\big(\rho(t,x)f(\rho(t,x))v((\rho\ast \omega)(t,x))\big)=0,
    \end{equation}
    with the same initial condition, as defined in \cite[Definition 1.1]{ChiarelloGoatin2018}.
    Then, we have the convergence
    $$
    \norma{\rho_\tau(t,\cdot)-\rho(t,\cdot)}_{\L1}\rightarrow 0,\qquad\forall t\in[0,T],
    $$
    as $\tau\searrow 0$.
\end{cor}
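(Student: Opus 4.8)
The plan is to read the non-delayed equation \eqref{nodelay_bis} as the limit case $\tau=0$ of \eqref{delay_bis} and to apply the stability estimate of Theorem~\ref{stabilityteo} with the two delay parameters $\tau_1=\tau>0$ and $\tau_2=0$. First I would observe that, setting $\tau=0$ in Definition~\ref{entropy}, the convolution is evaluated at time $t$ instead of $t-\tau$, so the resulting inequality is exactly the entropy formulation of \eqref{nodelay_bis} used in \cite[Definition 1.1]{ChiarelloGoatin2018}. Hence $\rho$ and $\rho_\tau$ are entropy weak solutions of \eqref{delay_bis}-\eqref{eq:initial datum} with the \emph{same} initial datum $\rho^0$ and with delays $\tau_2=0$ and $\tau_1=\tau$, respectively.

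Next I would apply Theorem~\ref{stabilityteo} with $\sigma=\rho$, $\sigma^0=\rho^0$ and $\tau_2=0$. Inspecting its proof, the only place where $\tau_2>0$ enters is the splitting of the delayed term into \eqref{tau2tau2} and \eqref{tau1tau2}: with $\tau_2=0$ the time shift in \eqref{tau2tau2} disappears (so no past-time values are needed) and \eqref{tau1tau2} is still controlled by the $\L1$ Lipschitz continuity in time \eqref{eq:L1time}, so the argument goes through verbatim and gives $K_3=1+K_1\min\{\tau,0\}=1$ in \eqref{K3}. Since $\norma{\rho^0-\sigma^0}_1=0$, the estimate \eqref{stimaL1} reduces to
\begin{equation*}
\norma{\rho_\tau(t,\cdot)-\rho(t,\cdot)}_1\leq e^{K_1T}\,K_2\,\tau,\qquad\forall\,t\in[0,T],
\end{equation*}
with $K_1$ and $K_2=K_1\mathcal{K}T$ given by \eqref{K1} and \eqref{K2}.

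It then remains to let $\tau\searrow0$. The constants $K_1$ and $K_2$ depend on $\tau$ only through the total-variation constant $C(T,\norma{\omega},\tau)$ of \eqref{c}, which enters the supremum of $\norma{\rho_\tau(t,\cdot)}_{\BV}$ in \eqref{K1} and the factor $\mathcal{K}$ of \eqref{k LF}. By the limit computed just after Proposition~\ref{spaceBVteo}, $C(T,\norma{\omega},\tau)\to e^{2\mathcal{M}T}$ as $\tau\to0$, so $K_1$ and $K_2$ stay uniformly bounded on any interval $(0,\tau_0]$. Therefore $e^{K_1T}K_2\,\tau\to0$, which yields the claimed convergence, in fact with the linear rate $\mathcal{O}(\tau)$, uniformly in $t\in[0,T]$.

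I expect the only delicate point to be the justification that Theorem~\ref{stabilityteo} may be invoked with $\tau_2=0$, i.e.\ that the non-delayed entropy solution of \eqref{nodelay_bis} coincides with the $\tau=0$ entropy solution of \eqref{delay_bis}. Should one prefer to avoid this identification, the same conclusion follows by a two-step argument: first apply Theorem~\ref{stabilityteo} to two \emph{positive} delays to obtain $\norma{\rho_\tau(t,\cdot)-\rho_{\tau'}(t,\cdot)}_1\leq e^{K_1T}K_2\modulo{\tau-\tau'}$, which shows that $\{\rho_\tau\}$ is Cauchy in $C([0,T];\L1)$ as $\tau\to0$ with some limit $\bar\rho$; then identify $\bar\rho$ with $\rho$ by passing to the limit in the entropy inequality of Definition~\ref{entropy}, the time shift $t-\tau$ inside the convolution being absorbed using \eqref{eq:L1time}, and invoking the uniqueness of the entropy solution of \eqref{nodelay_bis} from \cite{ChiarelloGoatin2018}.
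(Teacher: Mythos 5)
Your proposal is correct and follows essentially the same route as the paper, whose proof is the one-line observation that the claim is a direct consequence of Theorem~\ref{stabilityteo} together with the well-posedness of the non-delayed model from \cite[Theorem 1.2]{ChiarelloGoatin2018}; your main argument (reading \eqref{nodelay_bis} as the $\tau_2=0$ case of the stability estimate) and your fallback (Cauchy property in $\tau$ plus identification of the limit by uniqueness) are exactly the two natural ways of unpacking that sentence. Your explicit check that $K_1$ and $K_2$ remain uniformly bounded as $\tau\searrow 0$, via $C(T,\norma{\omega},\tau)\to e^{2\mathcal{M}T}$, is a detail the paper leaves implicit but which is genuinely needed for the conclusion, and it even yields the linear rate $\mathcal{O}(\tau)$.
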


\begin{proof}
    The statement is a direct consequence of Theorem \ref{stabilityteo} and \cite[Theorem 1.2]{ChiarelloGoatin2018}.
\end{proof}

\section{Numerical tests} \label{sec:num}
This section is devoted to present some numerical results concerning the nonlocal model with time delay~\eqref{delay_bis}.

\subsection{Comparison between HW and Lax-Friedrichs}\label{seconfrontoschemi}

In \cite{2018Gottlich}, the authors show the advantages of the HW scheme \eqref{schema2} with respect to the widely used LF scheme \eqref{schema}. They underline that one of the major advantages of HW, unlike LF, is that in \eqref{schema2} the numerical fluxes are always non-negative. Furthermore, HW is known to be less diffusive, as confirmed by the following experiments. 

Let us consider the space domain $[0,1]$ with free-flow boundary conditions and the following Riemann-like initial conditions
\begin{equation}\label{eq:ICtest1}
\rho^0(x)=
\left\{
\begin{array}{ll}
0.3&\mbox{  if }x < 0.5,\\
1.5&\mbox{  if }x \geq 0.5,
\end{array}
\right.
\quad\mbox{ and }\quad
\rho^0(x)=
\left\{
\begin{array}{ll}
1.5&\mbox{  if }x\leq 0.5,\\
0.3&\mbox{  if }x>0.5,
\end{array}
\right.
\end{equation}
that correspond respectively to a shock and a rarefaction wave for the classical (i.e. local) non delayed equation.
For simplicity, we choose the Greenshields' velocity function
\begin{equation}\label{Green}
v(\rho)=V\left(1-\frac{\rho}{R}\right),\qquad\mbox{ with }\quad V=0.9\quad\mbox{ and }\quad R=1.7,
\end{equation}
while the saturation is given by the linear function 
\begin{equation}\label{flinear}
    f(\rho)=1-\frac{\rho}{R}.
\end{equation}
We fix the look-ahead distance $L=0.015$, the time delay parameter $\tau=10^{-2}$ and we consider a constant weight  kernel $\omega(s)=\frac{1}{L}$.
In Figure \ref{confrontoschemi} we compare the numerical solutions at the final time $T=0.5$ obtained via LF and HW with $\Delta x=5\cdot 10^{-3}$ as the space step and the time step $\Delta t$ given by the CFL condition. We also represent a reference solution, which is computed with LF and the small space step $\Delta x=2.5\cdot 10^{-4}$. We can clearly see that HW is closer to the reference solution in both cases. 
\\In the rest of the section, we therefore perform all numerical tests using the HW scheme.

\begin{figure}[ht]
\centering
{\includegraphics[width=0.45\textwidth]{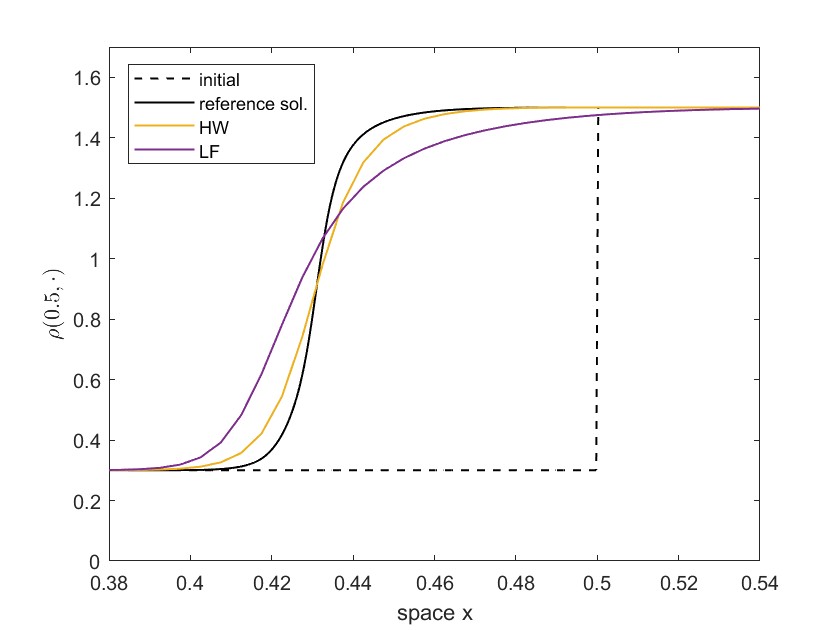}}
{\includegraphics[width=0.45\textwidth]{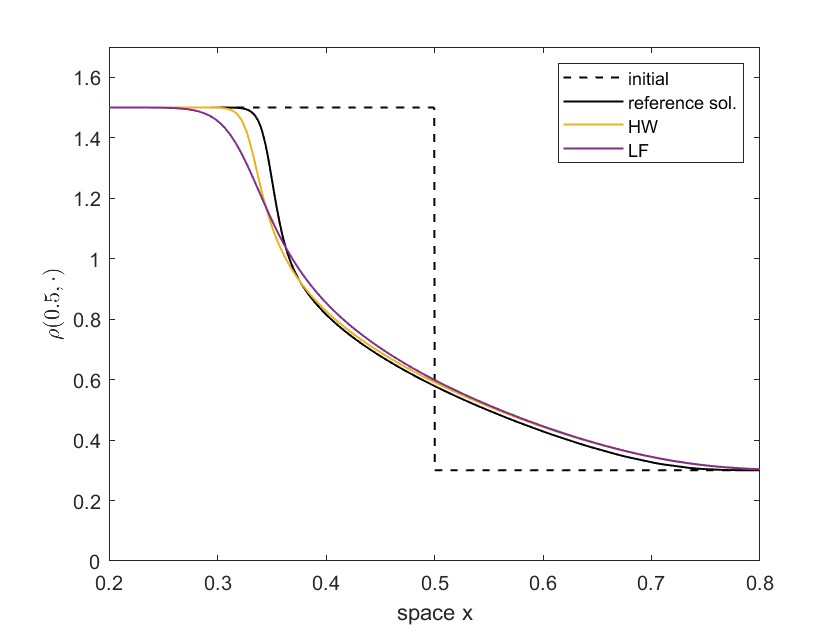}}
\caption{Comparison between LF \eqref{schema} and  HW \eqref{schema2} schemes  for $\Delta x=5\cdot 10^{-3}$ corresponding to initial data~\eqref{eq:ICtest1}.}\label{confrontoschemi}
\end{figure}

\subsection{Space step tending to zero}
As a consequence of the $\BV$ estimate \eqref{eq:TV}, the entropy solution of the model \eqref{delay_bis} may develop oscillations due to the delay in time. Obviously, we expect the numerical solution to be able to capture such behavior. In the following, we report a study of the solution $\rho^{\Delta x}$ for fixed values of $\tau=0.1$ and $L=0.15$, and for $\Delta x\searrow 0$. We consider the space domain $[0,5]$ with again free-flow boundary conditions, and we choose the Greenshields' velocity function \eqref{Green} as above.
Instead of the linear function~\eqref{flinear}, let us consider the family of exponential saturation functions 
\begin{equation}\label{fexp}
    f_\varepsilon(\rho)=1-e^{(\rho-R)/\epsilon},\qquad\mbox{ for }\varepsilon>0.
\end{equation}
The above sequence converges quasi-uniformly to $\chi_{[0,R[}$ as $\varepsilon\searrow 0$, but $f_\varepsilon (R)=0$ for all $\varepsilon$. Anyway, we cannot let $\varepsilon$ go to zero, since we need  $\norma{f'}$ to be bounded for the convergence of the scheme to hold (see Assumption \ref{hp}).
Choosing the saturation term as in~\eqref{fexp}, with for instance $\varepsilon=1/50$, model~\eqref{delay_bis} is expected to behave as \eqref{delay} for density values sufficiently smaller than the maximal density. \\
We consider the initial datum\begin{equation}\label{datum}
    \rho^0(x)=
    \left\{
    \begin{array}{ll}
    1.5&\mbox{ if }1\leq x\leq 2,\\
    0&\mbox{ otherwise,}
    \end{array}
    \right.
\end{equation}
and we fix a linear decreasing weight kernel $\omega(x)=\frac{2}{L}\left(1-\frac{x}{L}\right)$, which is a more reasonable choice to describe human behavior with respect to the constant one, as we underlined in Remark \ref{hpremark}. In Figure \ref{confrontodelta} we compare the numerical solution at the final time $T=0.5$ computed with decreasing values of the space step $\Delta x$ and with a time step $\Delta t$ chosen in such a way that the CFL condition \eqref{CFL HW} is fulfilled. 
We can see that, as $\Delta x$ diminish, the amplitudes (not the number) of the oscillations increase and the numerical scheme is able to better capture the properties of the entropy solution. We remark that the density cannot overtake the value $R$, which means that the oscillations can amplify only up to that value.

\begin{figure}
\centering
{\includegraphics[width=12cm]{./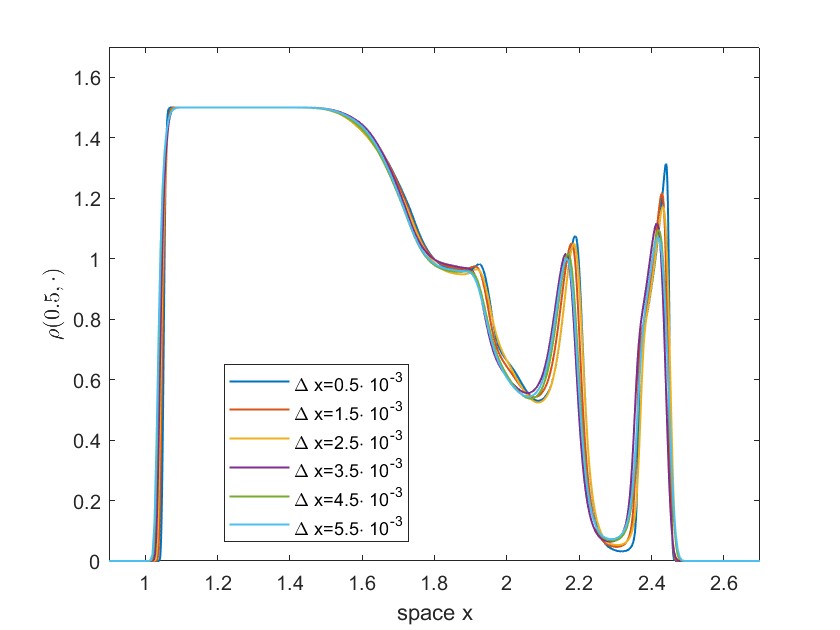}}
\caption{Comparison between the solution to the model \eqref{delay_bis} associated to the initial datum $\rho^0(x)=3/2\chi_{[1,2]}(x)$ and to the parameters $\tau=0.1$ and $L=0.15$, with decreasing values of the space step $\Delta x$.}\label{confrontodelta}
\end{figure}

\subsection{The effect of the saturation}\label{saturationsec}

We recall that we introduced the saturation function in order to guarantee that solutions do not exceed the maximum density $R$. Indeed, as noted in~\cite{KeimerPflug2019}, the solution to equation~\eqref{delay} can exceed $R$, and thus the velocity could be negative. To avoid this, in their tests the authors have chosen the cropped velocity function $v(\rho)=1-\min\{\rho,1\}$. With this choice, the solution to the delayed model \eqref{delay_bis} appears to be much smoother than the solution to the classical delayed model \eqref{delay}, for every choice of the time delay parameter $\tau$, but not exceeding $R$. 
\begin{figure}
\centering
{\includegraphics[width=15cm]{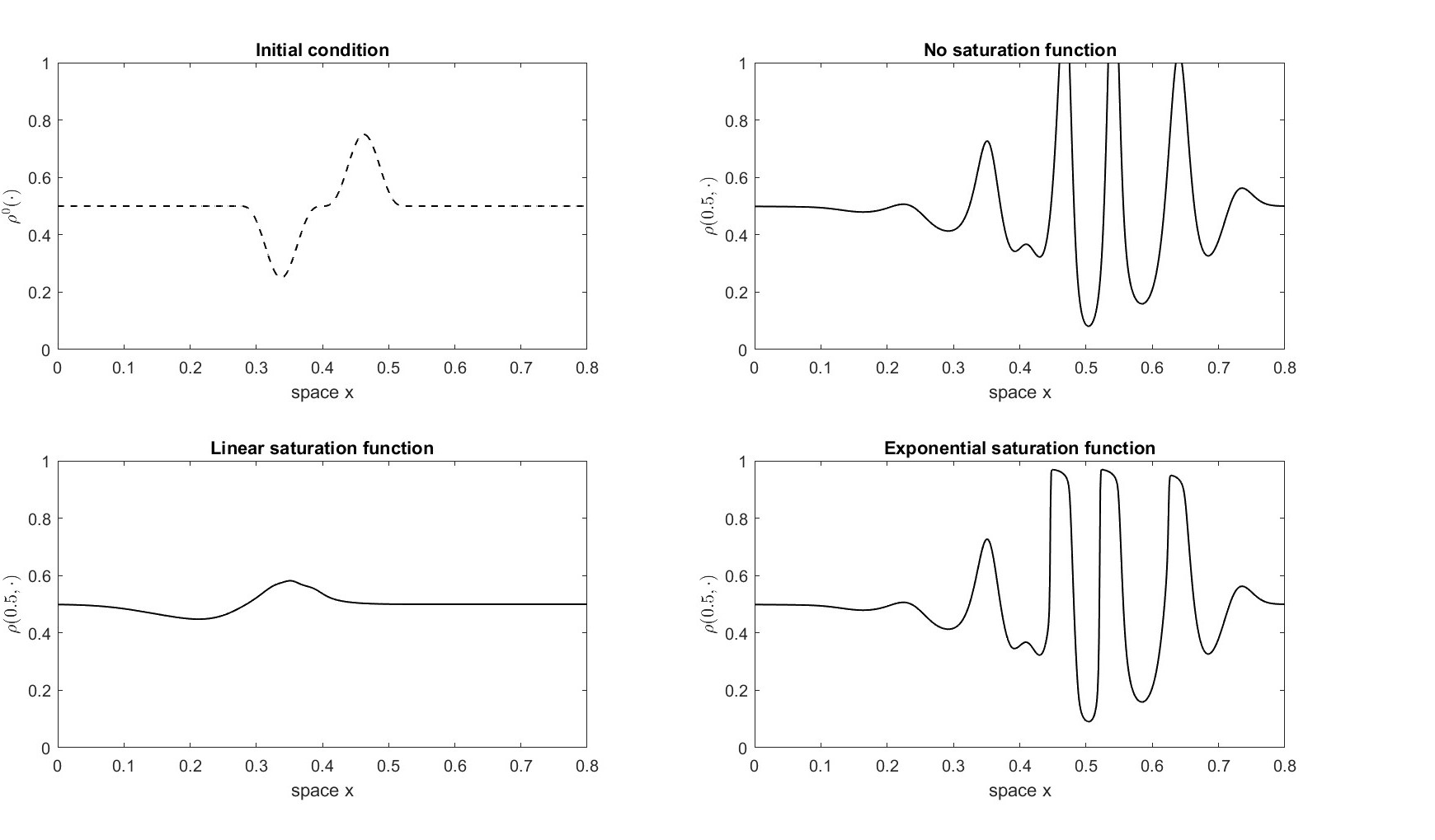}}
{\includegraphics[width=15cm]{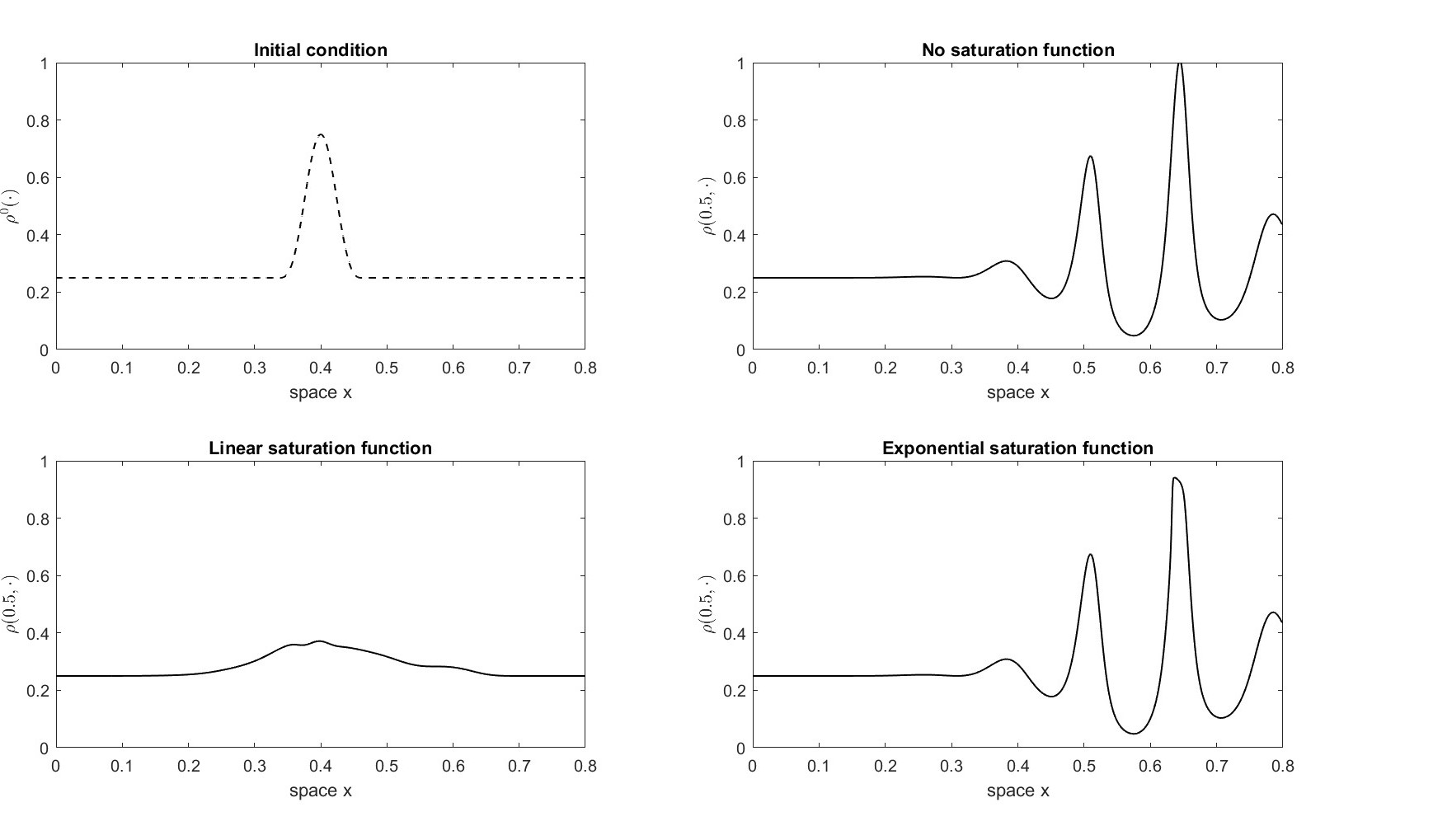}}
\caption{Comparison between the solution to the model with no saturation \eqref{delay}, and  the solution to the model \eqref{delay_bis} with $\tau=0.12$, and with saturation functions \eqref{flinear}\eqref{fexp} .\textbf{ Two top rows}: initial datum \eqref{eq:ICtest24}; \textbf{Two bottom rows}: initial datum \eqref{eq:ICtest23} with $\bar{\rho}=1/4$.}\label{confrontof23}
\end{figure}
\begin{figure}
\centering
{\includegraphics[width=15cm]{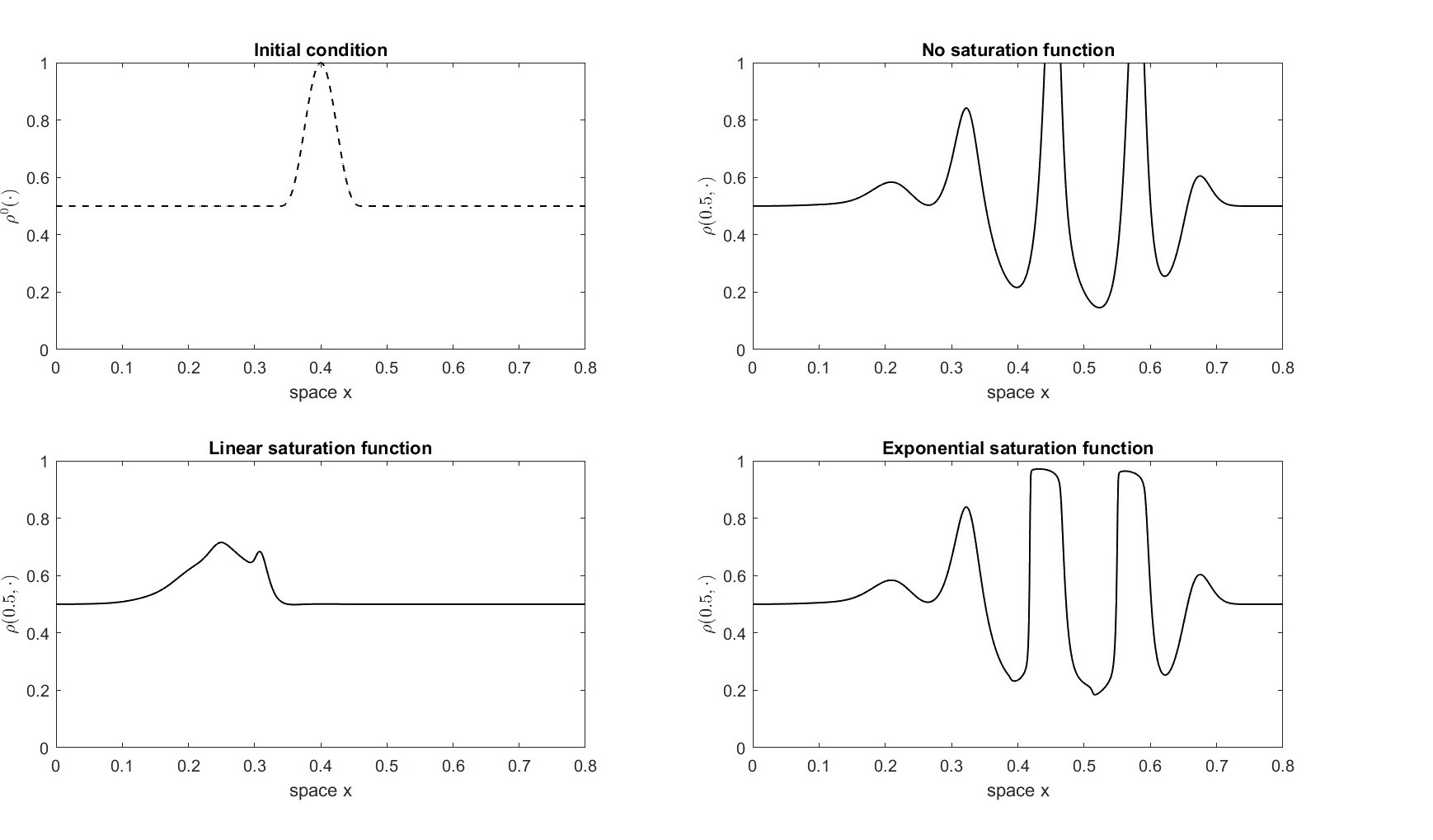}}
\caption{Comparison between the solution to the model with no saturation \eqref{delay}, and  the solution to the model \eqref{delay_bis} with saturation functions \eqref{flinear}\eqref{fexp}, associated to the initial datum \eqref{eq:ICtest23} with $\bar{\rho}=1/2$ and to the delay $\tau=0.08$.}\label{confrontof4}
\end{figure}
In Figure~\ref{confrontof23}-\ref{confrontof4}, we consider the initial conditions
\begin{align}
\rho^0(x)&=\frac{1}{2}+\left[\frac{3}{16}\sin\Big(8\pi\big(x-\frac{2}{5}\big)\Big)-\frac{1}{16}\sin\Big(24\pi\big(x-\frac{2}{5}\big)\Big)\right]\chi_{\left[\frac{11}{40},\frac{21}{40}\right]}(x),
\label{eq:ICtest24}
\\
\rho^0(x)&=\bar{\rho}+\left[\frac{3}{8}\cos\Big(8\pi\big(x-\frac{2}{5}\big)\Big)+\frac{1}{8}\cos\Big(24\pi\big(x-\frac{2}{5}\big)\Big)\right]\chi_{\left[\frac{27}{80},\frac{37}{80}\right]}(x),\quad\bar{\rho}\in\left\{\frac{1}{4},\frac{1}{2}\right\}.
\label{eq:ICtest23}
\end{align}
We choose again $\Delta x=10^{-3}$ and we fix $L=0.1$ and we compare the solutions at the final time $T=0.5$ of the classical delayed model \eqref{delay} and our delayed model~\eqref{delay_bis} associated to the linear saturation function~\eqref{flinear} and the exponential one~\eqref{fexp} with $\epsilon=1/50$.
\\In all the cases, we consider the velocity function 
\begin{equation}\label{Greennorm}
v(\rho)=1-\rho
\end{equation}
( i.e. $V=R=1$) and a constant weight kernel $\omega(x)=1/L$ for the sake of simplicity. We remark that the presence of a saturation term guarantees that the solution is bounded by $R$ even if the velocity is not cropped, consistently with Theorem \ref{boundteo}. This is true in both the cases of a linear and an exponential decreasing saturation function. When choosing an $f$ as in~\eqref{fexp}, the stabilizing effect is way less visible, and the density appears to be just a bounded form of the solution to the model with no saturation.

\subsection{Convergence to the non-delayed model}\label{convergencesec}

In the following examples, we illustrate the convergence of the solution for delay tending to zero, which is stated in Corollary~\ref{stabilitycor}. 
We consider the space domain $[0,5]$ with free-flow boundary conditions and we fix the space step $\Delta x=5\cdot 10^{-3}$. In Figure \ref{limitdelayfigure},  we compare the numerical solutions of the delayed equation~\eqref{delay_bis} associated to a constant look-ahead distance $L=0.15$ and different values of the time delay parameter $\tau$, to the solution of the non-delayed equation~\eqref{nodelay_bis} at the same final time $T=0.5$ and corresponding to the same initial data, velocity function \eqref{Greennorm}, linear decreasing weight kernel $\omega(x)=\frac{2}{L}\left(1-\frac{x}{L}\right)$ and the exponential saturation function~\eqref{fexp} with again $\epsilon=1/50$:
\begin{figure}[ht]
\centering
{\includegraphics[width=0.45\textwidth]{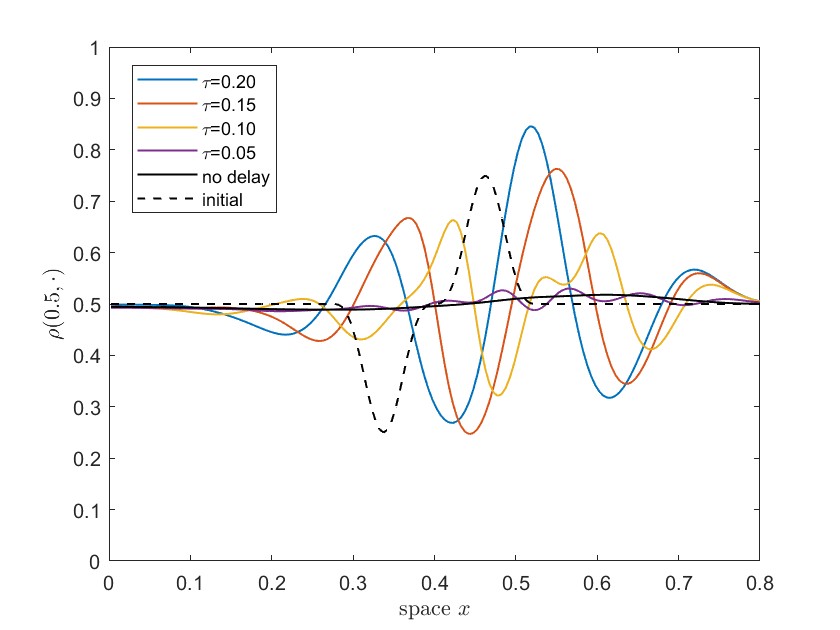}}
{\includegraphics[width=0.45\textwidth]{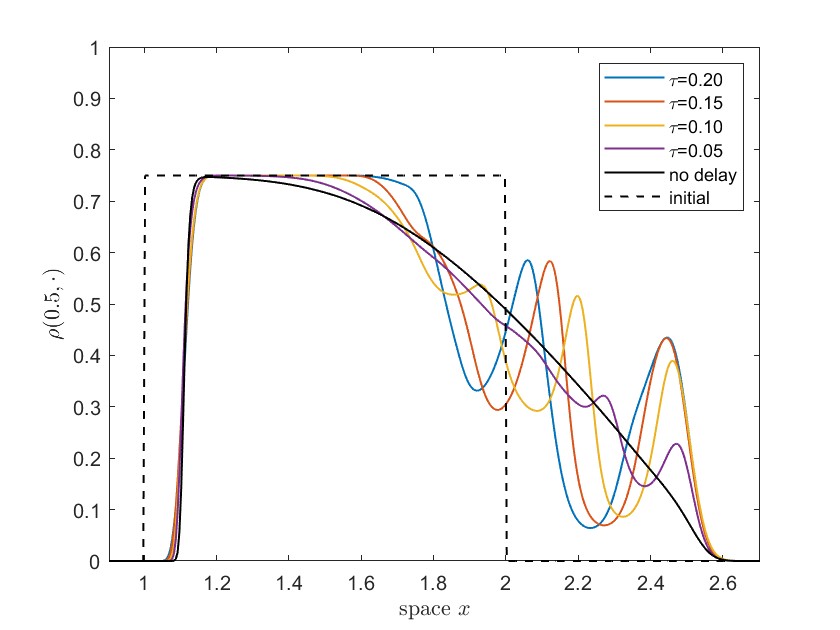}}
\caption{Convergence of the delayed model \eqref{delay_bis} to the non-delayed one \eqref{delay} with velocity \eqref{Greennorm}, linear decreasing kernel and exponential saturation function \eqref{fexp} with $\epsilon=1/50$, as the time delay parameter $\tau$ approaches zero. \textbf{Left:} initial datum \eqref{eq:ICtest21}. \textbf{Right:} initial condition~\eqref{datumnorm}.}\label{limitdelayfigure}
\end{figure}
\begin{figure}
\centering
{\includegraphics[width=0.45\textwidth]{./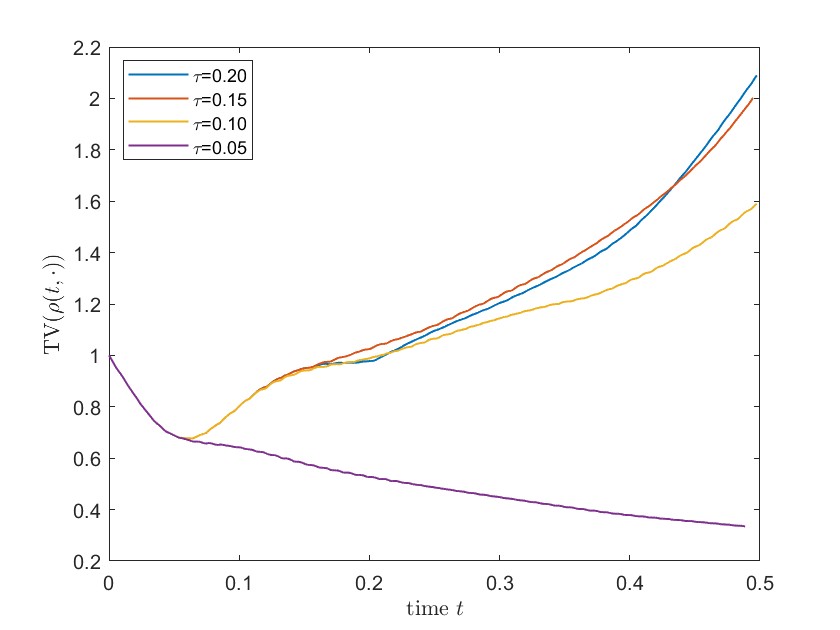}}
{\includegraphics[width=0.45\textwidth]{./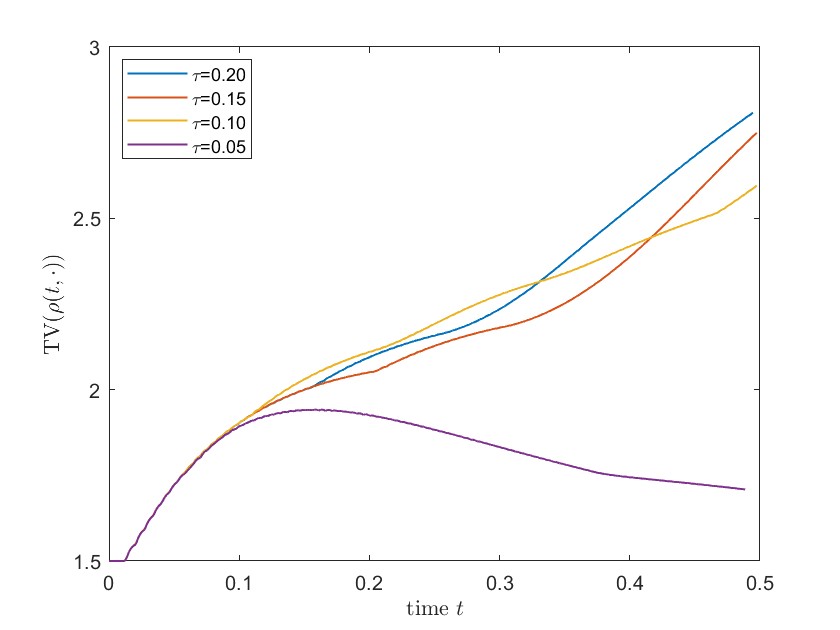}}
\caption{Total variation of the tests in Figure \ref{limitdelayfigure}. \textbf{Left:} initial datum \eqref{eq:ICtest21}. \textbf{Right:} initial condition~\eqref{datumnorm}.
}\label{totalvariationfigure}
\end{figure}

\begin{figure}
\centering
{\includegraphics[width=15cm]{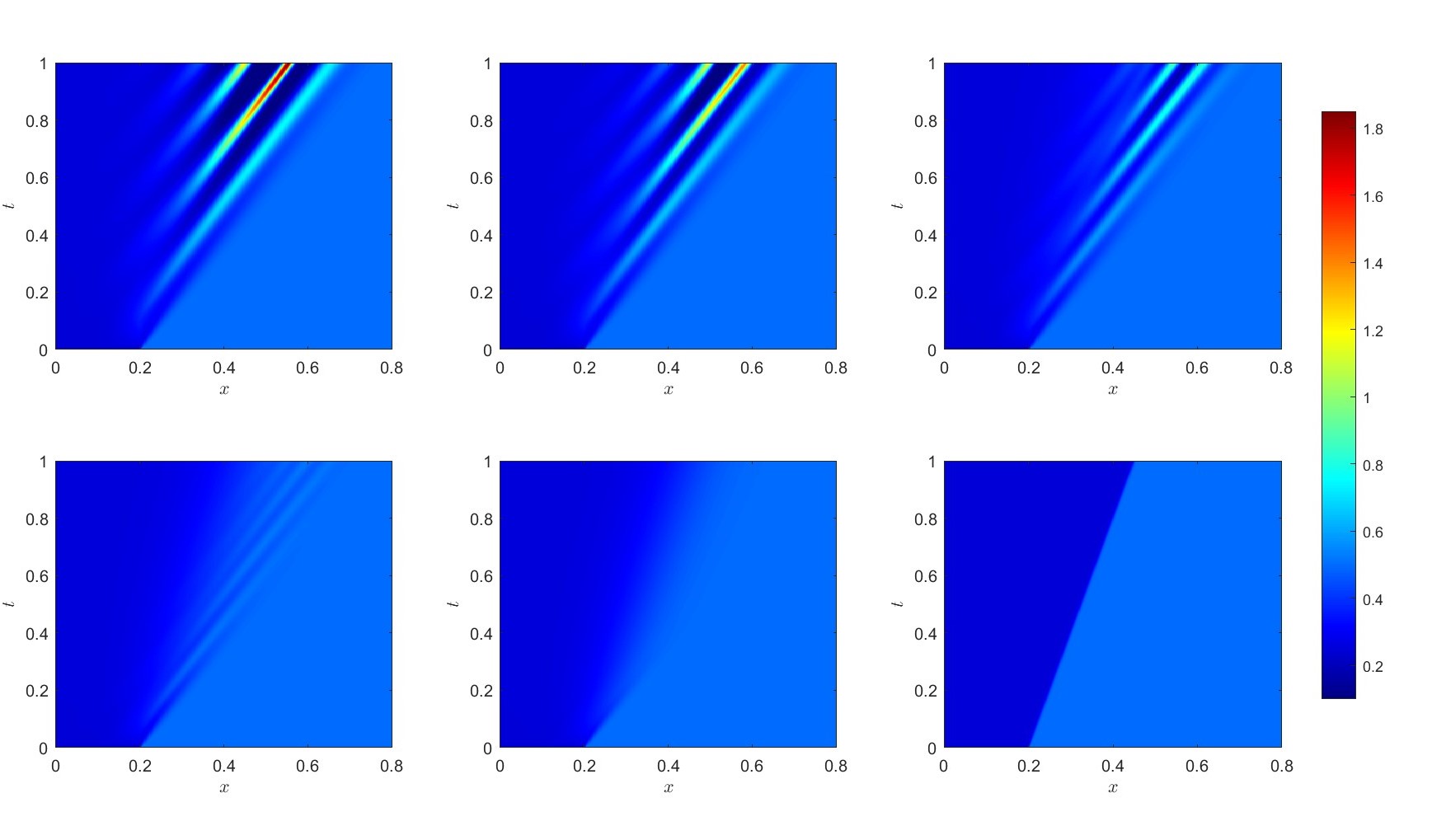}}
{\includegraphics[width=15cm]{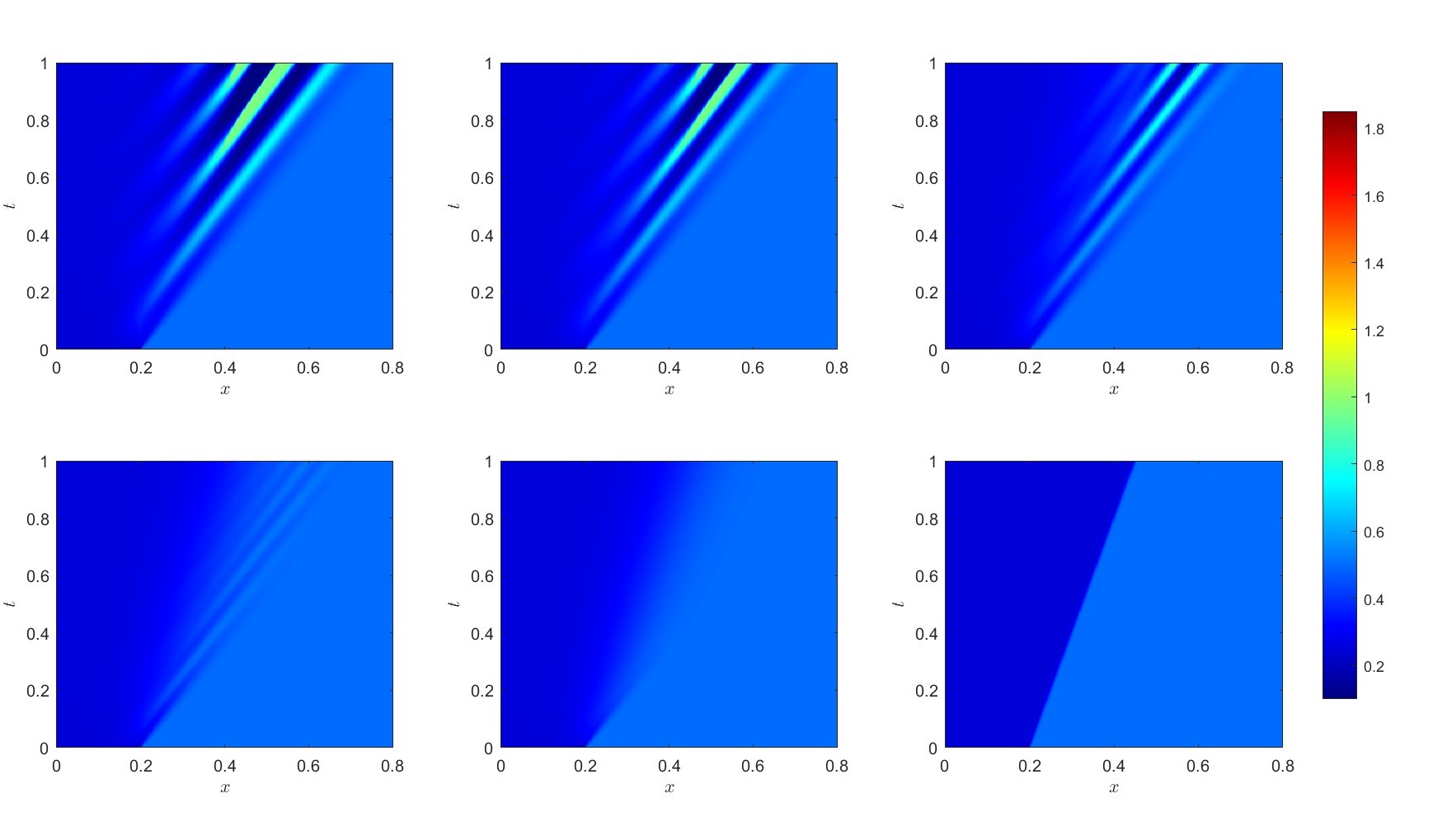}}
\caption{\textbf{Two top rows}: Keimer and Pflug's delayed model \eqref{delay}, $\tau=0.1,0.08,0.06,0.04,0.02$ (as in~\cite[Figure 3]{KeimerPflug2019}); \textbf{Two bottom rows}: delayed model \eqref{delay_bis}, same $\tau$ and exponential saturation function. The lower rightmost figure in both the couples of rows is the solution to the LWR model with initial datum \eqref{eq:ICtest22}. For the other graphs, we fix $L=0.1$  and linear decreasing kernel.}\label{keimer1}
\end{figure}
\begin{figure}
\centering
{\includegraphics[width=15cm]{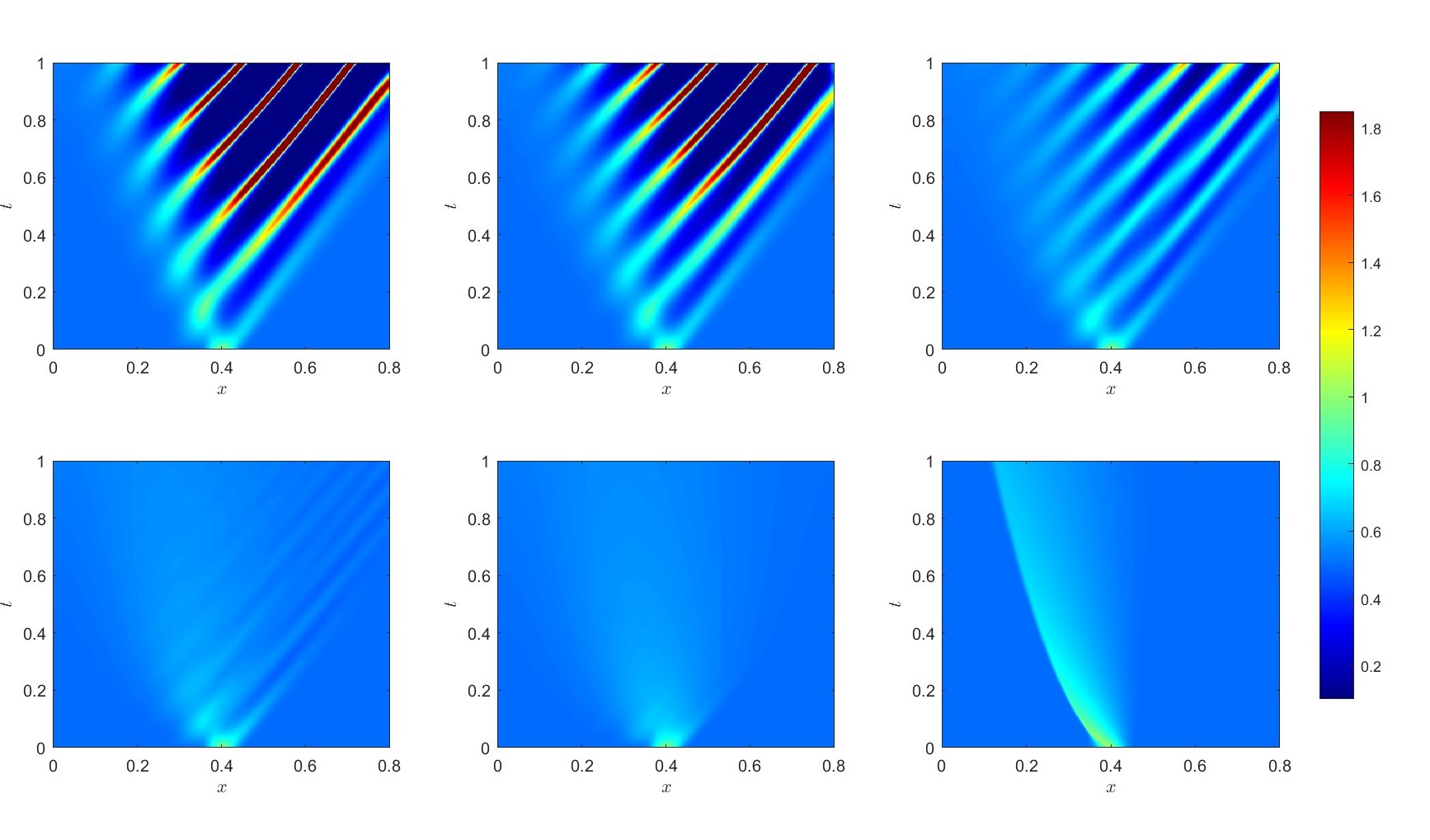}}
{\includegraphics[width=15cm]{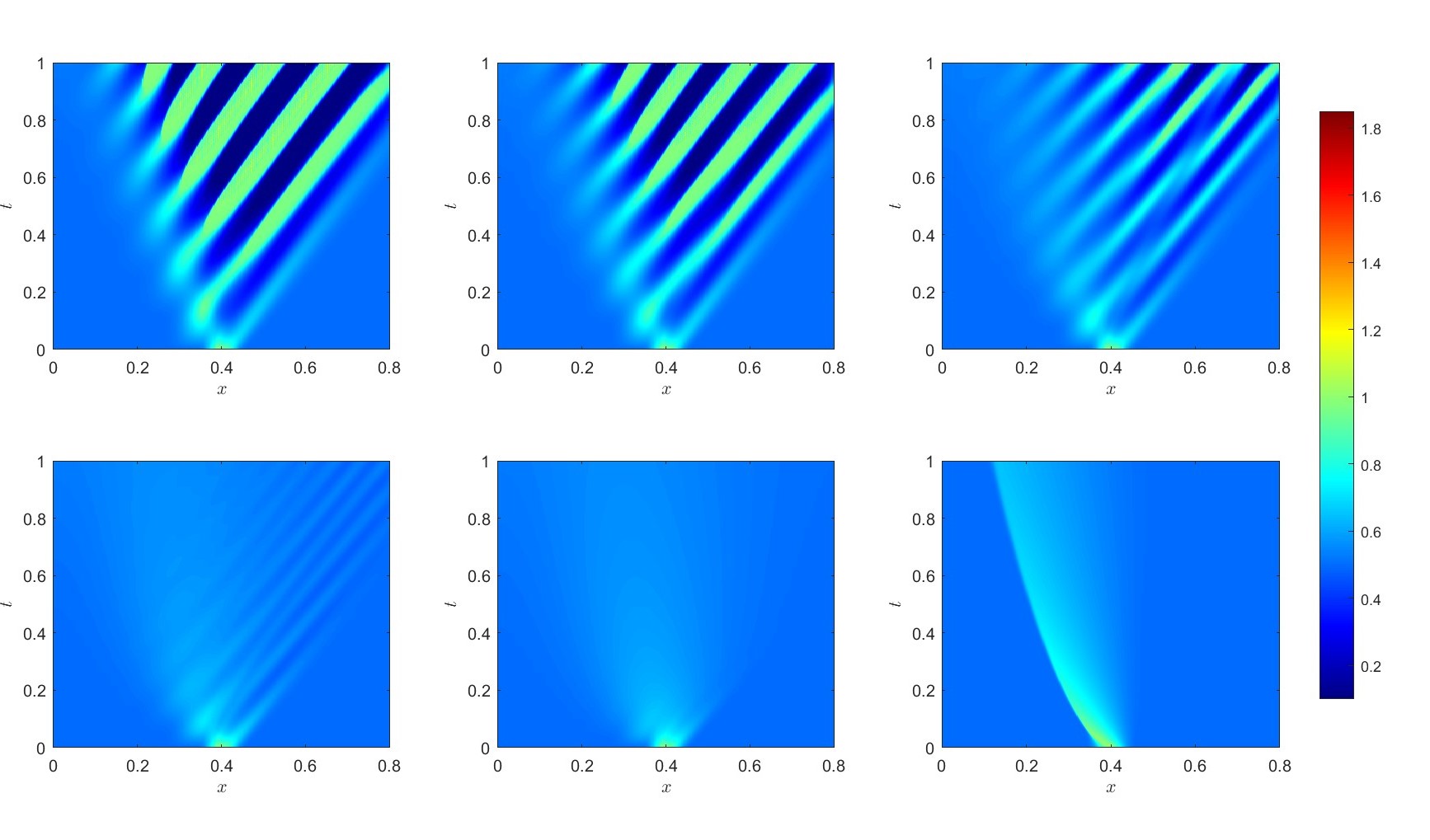}}
\caption{\textbf{Two top rows}: Keimer and Pflug's delayed model \eqref{delay}, $\tau=0.1,0.08,0.06,0.04,0.02$ (as in~\cite[Figure 5]{KeimerPflug2019}); \textbf{Two bottom rows}: delayed model \eqref{delay_bis}, same $\tau$ and exponential saturation function. The lower rightmost figure in both the couples of rows is the solution to the LWR model with initial datum \eqref{eq:ICtest23} with $\bar{\rho}=1/2$. For the other graphs, we fix $L=0.1$  and linear decreasing kernel.}\label{keimer3}
\end{figure}
\begin{itemize}
    \item 
On the left, following \cite[Section 5]{KeimerPflug2019}, we consider the initial datum 
\begin{equation} \label{eq:ICtest21}
    \rho^0(x)=\frac{1}{2}+\left[\frac{3}{16}\sin\left(8\pi\big(x-\frac{1}{2}\big)\right)-\frac{1}{16}\sin\left(24\pi\big(x-\frac{1}{2}\big)\right)\right]\chi_{\left[\frac{11}{40},\frac{21}{40}\right]}(x),
\end{equation}
 which is chosen in order to show how a small oscillation in the initial condition affects the evolution depending on the delay.  

\item On the right, we choose the initial condition 
\begin{equation}\label{datumnorm}
    \rho^0(x)=
    \left\{
    \begin{array}{ll}
    3/4&\mbox{ if }1\leq x\leq 2,\\
    0&\mbox{ otherwise,}
    \end{array}
    \right.
\end{equation}
which is a normalized version of \eqref{datum}.
\end{itemize}

Our numerical simulations are consistent with what it is observed in~\cite{KeimerPflug2019}:  the oscillations increase as $\tau$ increases, denoting higher total variation in accordance with~\eqref{eq:TV}. As a further proof, in Figure \ref{totalvariationfigure} we graph the total variation of the solution of the tests in Figure \ref{limitdelayfigure} as a function of time, computed with the different values of the time delay parameter. We see again higher values of the total variation for higher delays. 
\\The delay also influences the traffic flow by causing the formation of the so-called stop and go waves. In Figures~\ref{keimer1}-\ref{keimer3} we propose some examples that are inspired by Keimer and Pflug's numerical tests, in which this phenomena is clealy visible. In each of these figures, in the two top rows we show the numerical solution to their model~\eqref{delay}, while in the two bottom rows we plot the solutions to our model~\eqref{delay_bis}. In each couple of rows, the lower rightmost graph shows the solution to the classical LWR model associated to the Riemann-like initial datum
\begin{align}
\rho^0(x)&=
\left\{
\begin{array}{ll}
1/4&\mbox{ if }x\leq 0.2,\\
1/2&\mbox{ otherwise,}
\end{array}
\right.
\label{eq:ICtest22}
\end{align}
and to the same initial condition of Figure \ref{confrontof4}, which is given by \eqref{eq:ICtest23} with $\bar{\rho}=1/2$.
The other plots in the figures correspond to the numerical solutions associated to the same initial data, linear decreasing kernel with $L=0.1$, exponential saturation function \eqref{fexp} with again $\epsilon=1/50$ and different values of $\tau\in\{0.1,0.08,0.06,0.04,0.02\}$. The solutions are computed with the space step $\Delta x=2.3\cdot 10^{-3}$ in the space domain $[0,0.8]$.
We observe that the convergence to the non-delayed model can still be seen in the sense that when $\tau\rightarrow 0$ these clustering effects decrease in amplitude as well as the area on which they occur.
The graphs that we have just showed also clarify again how the saturation influences the solution. 

\section{Conclusions and perspectives}

In this paper, we introduced and studied a non-local macroscopic traffic flow model with time delay, which is able to model the nonzero reaction time that human-beings have in their response to a stimulus. We showed the well-posedness of the model through both Lax-Friedrichs and Hilliges-Weidlich  numerical schemes, analyzing the limit of the approximate solutions as the space step for the discretization tends to zero. We also proved a maximum principle and the stability with respect to both the delay and the initial condition, which provides the convergence to the limit model as the delay tends to zero. We recall that this property in general does not hold for both the local \cite{Gottlich2021} and the non-local \cite{KeimerPflug2019} existing models with time delay, meaning that in general $\rho>R$ is possible.
\\We then proposed some numerical simulations to illustrate our results. We showed that working with the Hilliges-Weidlich scheme is more convenient in terms of accuracy with respect to the Lax-Friedrichs scheme.
Additionally, the numerical experiments conducted in this study reveal that the model is able to exhibit stop-and-go waves, which seem to be a direct consequence of the delay in time. 

The above mentioned results open several perspectives for future research. First of all, it constitutes a very good basis to derive a multi-class model. 
Through this extension, we intend to further investigate the interactions among several classes of vehicles with different behaviors in terms of look-ahead distance and reaction time. 
In particular, we expect that  we can dissipate the traffic instabilities, and in particular stop-and-go waves, by introducing in the environment a class of vehicles with a faster reaction and with the ability to look further ahead of their current position, meaning by coupling different non-local delayed conservation laws as the one introduced in this paper, see also~\cite{ChiarelloGoatin2019}. Beyond the mathematical interest, the ultimate goal of such modeling framework would be investigating the interaction between human-driven and autonomous vehicles, and proving the stabilization of the traffic flow operated by the latters. This in turn may reduce fuel consumption and pollutant emissions.

\appendix

\section{Technical details} \label{sec:app}

In the following, we give some details about the proof of the limit $\eqref{part5}\rightarrow 0$ for $\Delta x \searrow 0$, which is used in the proof of Theorem \ref{E1}.
Starting from \eqref{perapp}, taking the absolute values and using the bound $\modulo{V^n_{j+1}-V^n_{j-1}}\leq 4\norma{v'}\norma{\omega}R\Delta x$, which is obtained as in \eqref{stimamodulo} for all $n\geq-h$, we get
\begin{align}
\modulo{\eqref{part5}}\leq&\ \frac{1}{2}\Delta t\kappa\sum_{n=1}^{N_T-1}\sum_j\modulo{\left(V^{n-h}_{j+1}-V^{n-h}_{j-1}\right)-\left(V^{n-h-1}_{j+1}-V^{n-h-1}_{j-1}\right)}\modulo{\phi^{n-1}_j}\nonumber\\
&+\frac{1}{2}\Delta t\Delta x\kappa\norma{\partial_x\phi}\sum_{n=1}^{N_T-1}\sum_{j=j_0}^{j_1}\modulo{V^{n-h}_{j+1}-V^{n-h}_{j-1}}\nonumber\\
&+\frac{1}{2}\Delta t\kappa\norma{\phi}\sum_{j=j_0}^{j_1}\modulo{V^0_{j+1}-V^0_{j-1}}\nonumber\\
\leq&\ \frac{1}{2}\Delta t\kappa\norma{\phi}\sum_{n=1}^{N_T-1}\sum_j\modulo{\left(V^{n-h}_{j+1}-V^{n-h}_{j-1}\right)-\left(V^{n-h-1}_{j+1}-V^{n-h-1}_{j-1}\right)}\modulo{\phi^{n-1}_j}\nonumber\\
&+4\kappa\norma{v'}\norma{\omega}RX\left(T\norma{\del_x\phi}\Delta x+\norma{\phi}\Delta t\right)\nonumber\\
=&\ \frac{1}{2}\Delta t\kappa\sum_{n=1}^{N_T-1}\sum_j\modulo{\left(V^{n-h}_{j+1}-V^{n-h}_{j-1}\right)-\left(V^{n-h-1}_{j+1}-V^{n-h-1}_{j-1}\right)}\modulo{\phi^{n-1}_j}\label{tbc}\\
&+C_1\Delta x+C_2\Delta t\nonumber,
\end{align}
where we defined 
\begin{align*}
C_1&=4\kappa\norma{v'}\norma{\omega}RXT\norma{\del_x\phi},\\
C_2&=4\kappa\norma{v'}\norma{\omega}RX\norma{\phi}.
\end{align*}
Taking the absolute values in \eqref{stimavelocita} and using \eqref{stimavelocitaaux}, we get
\begin{align*}
\big|\big(V^{n-h}_{j+1}&-V^{n-h}_{j-1}\big)-\left(V^{n-h-1}_{j+1}-V^{n-h-1}_{j-1}\right)\big|\leq\Delta x\norma{v''} \modulo{\xi^{n-h}_j-\xi^{n-h-1}_j}\norma{\omega}C(T,\norma{\omega},\tau)\tv(\rho^0)\\
&+\Delta x\norma{v'}\Big(\sum_{k=1}^{N}(\omega^{k-1}-\omega^{k+1})\modulo{\rho^{n-h}_{j+k}-\rho^{n-h-1}_{j+k}}\\
&\qquad\qquad\qquad+\omega^0\modulo{\rho^{n-h}_{j-1}-\rho^{n-h-1}_{j-1}}+\omega^1\modulo{\rho^{n-h}_j-\rho^{n-h-1}_j}\Big).
\end{align*}
For some $\theta,\mu\in[0,1]$, we compute
\begin{align*}
\xi^{n-h}_j-\xi^{n-h-1}_j=&\ \Delta x\sum_{k=0}^{+\infty}\left[\mu\omega^k\rho^{n-h}_{j+k+1}+(1-\mu)\omega^k\rho^{n-h}_{j+k-1}-\theta\omega^k\rho^{n-h-1}_{j+k+1}-(1-\theta)\omega^k\rho^{n-h-1}_{j+k-1}\right]\\
=&\ \Delta x\sum_{k=0}^{+\infty}\left[\theta\omega^k\left(\rho^{n-h}_{j+k+1}-\rho^{n-h-1}_{j+k+1}\right)+(1-\theta)\omega^k\left(\rho^{n-h}_{j+k-1}-\rho^{n-h-1}_{j+k-1}\right)\right]\\
&+\Delta x\sum_{k=0}^{+\infty}\left[(\mu-\theta)\omega^k\rho^{n-h}_{j+k+1}+[(1-\mu)-(1-\theta)]\omega^k\rho^{n-h}_{j+k-1}\right]\\
=&\ \Delta x\sum_{k=1}^{N}\left(\theta\omega^{k-1}+(1-\theta)\omega^{k+1}\right)\left(\rho^{n-h}_{j+k}-\rho^{n-h-1}_{j+k}\right)\\
&+\Delta x(\mu-\theta)\left[\sum_{k=1}^{+\infty}(\omega^{k-1}-\omega^{k+1})\rho^{n-h}_{j+k}-\omega^0\rho^{n-h}_{j-1}-\omega^1\rho^{n-h}_j\right]\\
&+\Delta x(1-\theta)\omega^0\left(\rho^{n-h}_{j-1}-\rho^{n-h-1}_{j-1}\right)+\Delta x(1-\theta)\omega^1\left(\rho^{n-h}_j-\rho^{n-h-1}_j\right).
\end{align*}
Since \eqref{delta rho} implies that for every $j\in\mathbb{Z}$
\begin{align}
\modulo{\rho^{n-h}_j-\rho^{n-h-1}_j}\leq&\  \frac{\lambda}{2}\left[\alpha+\left(1+R\norma{f'}\right)V\right]\left(\modulo{\rho^{n-h-1}_{j+1}-\rho^{n-h-1}_j}+\modulo{\rho^{n-h-1}_j-\rho^{n-h-1}_{j-1}}\right)\nonumber\\
&+2R^2\norma{v'}\norma{\omega}\Delta t,\label{utile}
\end{align}
then, similarly to \eqref{delta xi}, we get
\begin{align*}
\big|\xi^{n-h}_j-&\xi^{n-h-1}_j\big|
\leq\ 2\Delta x\norma{\omega}\sum_{k=1}^N\modulo{\rho^{n-h}_{j+k}-\rho^{n-h-1}_{j+k}}+\Delta x R \left(\sum_{k=1}^N(\omega^{k-1}-\omega^{k+1})+4\omega^0\right)\\
\leq&\ \Delta t\norma{\omega}
\left[\alpha+\left(1+R\norma{f'}\right)V\right]
\left(\sum_{k\in\mathbb{Z}}\modulo{\rho^{n-h-1}_{k+1}-\rho^{n-h-1}_k}+\sum_{k\in\mathbb{Z}}\modulo{\rho^{n-h-1}_k-\rho^{n-h-1}_{k-1}}\right)\\
&+4\norma{\omega}R^2\norma{v'}\norma{\omega}\Delta t\sum_{k=1}^N\Delta x+6\norma{\omega}R\Delta x\\
\leq&\ C_3\Delta x+C_4\Delta t,
\end{align*}
with 
\begin{align*}
C_3&=6\norma{\omega}R,\\
C_4&=2\norma{\omega}\left[
\left[\alpha+(1+R\norma{f'})V\right]
C(T,\norma{\omega},\tau)\tv(\rho^0)+2\norma{\omega}R^2\norma{v'}L\right].
\end{align*}
Thus, from \eqref{tbc} it follows
\begin{align*}
\modulo{\eqref{part5}}\leq&\ \frac{1}{2} \Delta t\Delta x\kappa\norma{\phi} \norma{v''}\norma{\omega}C(T,\norma{\omega},\tau)\tv(\rho^0)\sum_{n=1}^{N_T-1}\sum_{j=j_0}^{j_1}\modulo{\xi^{n-h}_j-\xi^{n-h-1}_j}\\
&+\frac{1}{2}\Delta t\Delta x\kappa \norma{v'}\sum_{k=1}^{N}(\omega^{k-1}-\omega^{k+1})\sum_{n=1}^{N_T-1}\sum_j\modulo{\rho^{n-h}_{j+k}-\rho^{n-h-1}_{j+k}}\modulo{\phi^{n-1}_j}\\
&+\frac{1}{2}\Delta t\Delta x\kappa\norma{v'} \sum_{n=1}^{N_T-1}\sum_j\left(\omega^0\modulo{\rho^{n-h}_{j-1}-\rho^{n-h-1}_{j-1}}+\omega^1\modulo{\rho^{n-h}_j-\rho^{n-h-1}_j}\right)\modulo{\phi^{n-1}_j}\\
&+C_1\Delta x+C_2\Delta t\\
\leq&\ \frac{1}{2}\Delta t\Delta x\kappa \norma{v'}\sum_{k=1}^{N}(\omega^{k-1}-\omega^{k+1})\sum_{n=1}^{N_T-1}\sum_j\modulo{\rho^{n-h}_j-\rho^{n-h-1}_j}\modulo{\phi^{n-1}_{j-k}}\\
&+\frac{1}{2}\Delta t\Delta x\kappa\norma{v'} \omega^0\sum_{n=1}^{N_T-1}\sum_j\modulo{\rho^{n-h}_j-\rho^{n-h-1}_j}\left(\modulo{\phi^{n-1}_{j+1}}+\modulo{\phi^{n-1}_j}\right)\\
&+C_5\Delta x+C_6\Delta t\\
\leq&\ 2\Delta t\Delta x\kappa\norma{\phi}\norma{v'}\norma{\omega}\sum_{n=1}^{N_T-1}\sum_{j=j_0-N}^{j_1+N}\modulo{\rho^{n-h}_j-\rho^{n-h-1}_j}\\
&+C_5\Delta x+C_6\Delta t
\end{align*}
being 
\begin{align*}
C_5&=C_1+C_3XT\kappa\norma{\phi}\norma{v''}\norma{\omega}C(T,\norma{\omega},\tau)\tv(\rho^0)\\
C_6&=C_2+C_4XT\kappa\norma{\phi}\norma{v''}\norma{\omega}C(T,\norma{\omega},\tau)\tv(\rho^0).
\end{align*}
Now, applying again \eqref{utile}, one can bound
\begin{align*}
    \modulo{\eqref{part5}}\leq&\ \lambda\left[\alpha+\left(1+R\norma{f'}\right)V\right]\Delta t\Delta x\kappa\norma{\phi}\norma{v'}\norma{\omega}\\
    &\qquad\cdot\sum_{n=1}^{N_T-1}\sum_{j=j_0-N}^{j_1+N}\left(\modulo{\rho^{n-h-1}_{j+1}-\rho^{n-h-1}_j}+\modulo{\rho^{n-h-1}_j-\rho^{n-h-1}_{j-1}}\right)\\
    &+8(X+L)T\kappa\norma{\phi}\norma{v'}^2\norma{\omega}^2R^2\Delta t+C_5\Delta x+C_6\Delta t\\
    \leq&\ \lambda\left[\alpha+\left(1+R\norma{f'}\right)V\right]\kappa\norma{\phi}\norma{v'}\norma{\omega}\\
    &\qquad\cdot\left(\int_0^T\int_{-(X+L)}^{X+L}\modulo{\rho^{\Delta x}\left(t-(h+1)\Delta t,x+\Delta x\right)-\rho^{\Delta x}\left(t-(h+1)\Delta t,x\right)}\d x\d t\right.\\
    &\qquad+\left.\int_0^T\int_{-(X+L)}^{X+L}\modulo{\rho^{\Delta x}\left(t-(h+1)\Delta t,x\right)-\rho^{\Delta x}\left(t-(h+1)\Delta t,x-\Delta x\right)}\d x\d t\right)\\
    &+C_5\Delta x+C_7\Delta t\\
    \leq&\ 2\left[\alpha+\left(1+R\norma{f'}\right)V\right]\kappa\norma{\phi}\norma{v'}\norma{\omega}\mathcal{C}\tv(\rho^0)\Delta t+C_5\Delta x+C_7\Delta t\\
    \leq&\ C_5\Delta x+C_8\Delta t\\
\end{align*}
where the positive constant $\mathcal{C}$ is given by Proposition \ref{BVteo} and
\begin{align*}
C_7&=C_6+8(X+L)T\kappa\norma{\phi}\norma{v'}^2\norma{\omega}^2R^2,\\
C_8&=C_7+2\left[\alpha+\left(1+R\norma{f'}\right)V\right]\kappa\norma{\phi}\norma{v'}
\norma{\omega}\mathcal{C}\tv(\rho^0).
\end{align*}
This proves that \eqref{part5} converges to zero as $\Delta x\rightarrow 0$ ( and $\Delta t\rightarrow 0$).


\section*{Acknowledgments}

This work was funded by the European Union’s Horizon Europe research and innovation programme under the Marie Skłodowska-Curie Doctoral Network Datahyking (Grant No. 101072546).
G. Puppo was also supported by the European Union-NextGenerationEU (National Sustainable Mobility Center CN00000023, Italian Ministry of University and Research Decree n. 1033- 17/06/2022, Spoke 9).

{ \small
	\bibliography{nonlocal}
	\bibliographystyle{abbrv}
}

\end{document}